\documentclass[11pt,a4paper,leqno]{amsart}

\usepackage[latin1]{inputenc}
\usepackage[T1]{fontenc}
\usepackage{amsfonts}
\usepackage{amsmath}
\usepackage{amssymb}
\usepackage{eurosym}
\usepackage{mathrsfs}
\usepackage{palatino}
\usepackage{pxfonts}
\usepackage{color}
\usepackage{esint}
\usepackage{url}
\usepackage{verbatim}
\usepackage{mathtools}

\usepackage{enumerate}

\usepackage[pagebackref,hypertexnames=false, colorlinks, citecolor=blue, linkcolor=blue, urlcolor=red]{hyperref}

\newcommand{\C}{\mathbb{C}}
\newcommand{\Z}{\mathbb{Z}}

\newcommand{\N}{\mathbb{N}}
\newcommand{\bbP}{\mathbb{P}}
\newcommand{\Q}{\mathbb{Q}}
\newcommand{\R}{\mathbb{R}}

\newcommand{\E}{\mathbb{E}}

\newcommand{\calA}{\mathcal{A}}
\newcommand{\calB}{\mathcal{B}}
\newcommand{\calC}{\mathcal{C}}
\newcommand{\calE}{\mathcal{E}}

\newcommand{\calI}{\mathcal{I}}
\newcommand{\calJ}{\mathcal{J}}
\newcommand{\calK}{\mathcal{K}}
\newcommand{\calL}{\mathcal{L}}
\newcommand{\calM}{\mathcal{M}}
\newcommand{\calP}{\mathcal{P}}

\newcommand{\calS}{\mathcal{S}}
\newcommand{\calT}{\mathcal{T}}
\newcommand{\calR}{\mathcal{R}}
\newcommand{\calY}{\mathcal{Y}}

\newcommand{\calU}{\mathcal{U}}
\newcommand{\calV}{\mathcal{V}}

\newcommand{\scrC}{\mathscr{C}}
\newcommand{\scrD}{\mathscr{D}}

\newcommand{\size}{\operatorname{size}}

\newcommand{\weak}{\operatorname{weak}}
\newcommand{\WBP}{\operatorname{WBP}}
\newcommand{\Leb}{\operatorname{Leb}}

\newcommand{\RMF}{\operatorname{RMF}}
\newcommand{\RM}{\operatorname{RM}}
\newcommand{\CZ}{\operatorname{CZ}}
\newcommand{\hR}{\operatorname{\widehat{\calR}}}

\newcommand{\bla}{\big \langle}
\newcommand{\bra}{\big \rangle}

\newcommand{\ud}[0]{\,\mathrm{d}}


\newcommand{\pair}[2]{\langle #1,#2 \rangle}


\newcommand{\BMO}[0]{\operatorname{BMO}}
\newcommand{\supp}[0]{\operatorname{spt}}
\newcommand{\loc}[0]{\operatorname{loc}}



\newcommand{\Rad}[0]{\operatorname{Rad}}
\newcommand{\UMD}{\operatorname{UMD}}

\newcommand{\good}[0]{\operatorname{good}}
\newcommand{\bad}[0]{\operatorname{bad}}
\newcommand{\ch}[0]{\operatorname{ch}}

\newcommand{\calD}[0]{\mathcal{D}}

\newcommand{\wt}[1]{{\widetilde{#1}}}

\swapnumbers
\theoremstyle{plain}
\newtheorem{thm}[equation]{Theorem}
\newtheorem{lem}[equation]{Lemma}
\newtheorem{prop}[equation]{Proposition}

\theoremstyle{definition}
\newtheorem{defn}[equation]{Definition}

\newtheorem{exmp}[equation]{Example}

\theoremstyle{remark}
\newtheorem{rem}[equation]{Remark}

\numberwithin{equation}{section}

\pagestyle{headings}

\addtolength{\hoffset}{-1.15cm}
\addtolength{\textwidth}{2.3cm}
\addtolength{\voffset}{0.45cm}
\addtolength{\textheight}{-0.9cm}

\setcounter{tocdepth}{1}

\author{Francesco Di Plinio}
\address[F. D. P.]{Department of Mathematics, Washington University in St. Louis, One Brookings Drive,  St. Louis,  MO 63130-4899, USA}
\email{francesco.diplinio@gmail.com}

\author{Kangwei Li}
\address[K.L.]{\curraddr{Center for Applied Mathematics, Tianjin University, Weijin Road 92, 300072 Tianjin, China}\\ BCAM (Basque Center for Applied Mathematics), Alameda de Mazarredo 14, 48009 Bilbao, Spain}
\email{kangwei.nku@gmail.com}

\author{Henri Martikainen}
\address[H.M.]{Department of Mathematics and Statistics, University of Helsinki, P.O.B. 68, FI-00014 University of Helsinki, Finland}
\email{henri.martikainen@helsinki.fi}

\author{Emil Vuorinen}
\address[E.V.]{\curraddr{Department of Mathematics and Statistics, University of Helsinki, P.O.B. 68, FI-00014 University of Helsinki, Finland} \\  
Centre for Mathematical Sciences, University of Lund, P.O.B. 118, 22100 Lund, Sweden}
\email{j.e.vuorin@gmail.com}

\title{Multilinear operator-valued Calder\'on-Zygmund theory}

\makeatletter
\@namedef{subjclassname@2010}{%
  \textup{2010} Mathematics Subject Classification}
\makeatother
\subjclass[2010]{42B20}
\keywords{Calder\'on--Zygmund operators, singular integrals, operator-valued analysis, multilinear analysis, multi-parameter analysis, $\calR$-boundedness, representation theorems, RMF property, UMD spaces} 

\begin{document}

\begin{abstract}
We develop a general theory of  multilinear singular integrals with operator-valued kernels, acting on tuples of $\UMD$ Banach spaces.   This, in particular, involves  investigating  multilinear variants of the
$\calR$-boundedness condition naturally arising in operator-valued theory. We proceed by establishing a suitable  representation of multilinear, operator-valued singular integrals in terms of operator-valued dyadic shifts and paraproducts, and studying the boundedness of these model operators via dyadic-probabilistic Banach space-valued analysis.   In the bilinear case, we obtain a $T(1)$-type theorem without any additional assumptions on the Banach spaces other than the necessary $\UMD$.  Higher degrees of multilinearity are tackled via a  new formulation of the Rademacher maximal function (RMF) condition. In addition to the natural $\UMD$ lattice cases, our RMF condition covers suitable tuples of non-commutative $L^p$-spaces.
We employ our operator-valued theory to obtain new  multilinear, multi-parameter, operator-valued theorems in the natural setting of $\UMD$ spaces with property $\alpha$.
\end{abstract}

\maketitle

\section{Introduction}
Singular integral operators (SIOs) take the form
\begin{equation}\label{eq:SIO}
  Tf(x)=\int_{\R^d}K(x,y)f(y)\ud y, \qquad x \not \in \operatorname{spt} f,
\end{equation}
and they are abundant in classical and applied harmonic analysis. On the other hand, the UMD (unconditionality of martingale differences) property of a Banach space $X$ is a well-known necessary and sufficient condition for the boundedness of generic singular integrals on $L^p(\R^d;X)$, see Burkholder \cite{Burk1} and Bourgain \cite{Bou}, or the recent book
\cite[Sec.\ 5.2.c and the Notes to Sec.\ 5.2]{HNVW1}. Further progress on  Banach space-valued singular integrals \emph{in the linear setting} has been intertwined with applications to rather disparate areas, such as  the geometry of Banach spaces \cite{JX,KW1},  the regularity theory of elliptic and parabolic equations \cite{CH,Weis}, and the study of quasiconformal mappings \cite{GMSS}.

In the literature, classical singular integral operators with scalar-valued kernels $K$ acting on  $X$-valued functions are usually referred to as  vector-valued, or Banach-valued, singular integral operators. On the other hand,
operator-valued theory
concerns the more general case, where the kernel $K$ itself takes values in bounded linear operators between two Banach spaces $X,Y$. The systematic study of linear, operator-valued singular integrals  was first sparked by the operator-valued Fourier multiplier theorem of Weis \cite{Weis}, which is the central tool in the author's proof of maximal $L^p$-regularity for parabolic equations. In this setting, the requirement of uniform $\mathcal L(X,Y)$-bounds of H\"ormander-Mihlin type on the multiplier must be replaced with the stronger
$\calR$-boundedness condition; essentially, $\{T_1,\ldots, T_n\}$ is an $\mathcal R$-bounded set in $\mathcal L(X,Y)$ if $\{f_j:j=1,\ldots, n\mapsto T_j f_j: j=1,\ldots, n\} $ is a bounded operator from $\mathrm{Rad} X$ to $\mathrm{Rad} Y$, where $\mathrm{Rad}$ is the Rademacher space.  This approach has been later recast by Hyt\"onen and Weis \cite{HW} into a $T(1)$-type theorem for operator-valued kernels. Our broad goal is to provide an extension of \cite{HW} to the multilinear case. Therefore, a first essential difficulty we must deal with is to find a natural multilinear analog of the $\mathcal R$-boundedness condition. As we will see below, this requires additional care when dealing with linearities of degree three and higher.

We now come to a more detailed description of our main object of study. At least heuristically, we may think of an  $n$-linear singular integral operator $T$ acting on  $\R^d$ as being given by,
$$
T(f_1,\ldots, f_n)(x) = U(f_1 \otimes \cdots \otimes f_n)(x,\ldots,x), \qquad x \in \R^d,\, f_i \colon \R^d \to \C,
$$
where $U$ is a linear singular integral operator in $\R^{nd}$.
More precisely, an $n$-linear SIO $T$ has a kernel $K$ satisfying natural estimates that can be deduced from the above heuristic via the linear estimates, and
$$
T(f_1, \ldots, f_n)(x)  = \int_{\R^{nd}} K(x,y)  \prod_{i=1}^n f_i(y_i)  \ud y , \qquad x \not \in \bigcap_{i=1}^n \operatorname{spt} f_i.
$$
The study of multilinear singular multipliers and kernel operators began with the seminal articles of Coifman--Meyer \cite{CoifMey} and Christ--Journ\'e \cite{ChJo}. Motivation for this study comes from applications to elliptic and dispersive partial differential equations, ergodic theory and complex function theory, among others.
We remark that  the first general $T(1)$-type result for multilinear singular kernels, in the scalar case, is due to Grafakos--Torres \cite{GT}.

\subsection{Main results}
Until recently, vector-valued extensions of multilinear Calder\'on-Zyg\-mund operators had mostly been studied in the framework of $\ell^p$ spaces and function lattices, rather than general UMD spaces. Boundedness of  $\ell^p$ extensions is classically obtained through weighted norm inequalities, more recently in connection with localized techniques such as sparse domination: see \cite{GM} and the more recent \cite{CDPO,LMO,LMMOV,Nieraeth} for a non-exhaustive  overview of their interplay.
The paper \cite{DLMV1} finally established $L^p$
bounds for the extensions of $n$-linear SIOs to tuples of $\UMD$ spaces tied by a natural product structure --  for example, the composition of operators in the  Schatten-von Neumann  subclass of the algebra of bounded operators on a Hilbert space.

Before \cite{DLMV1}, Di Plinio and Y.\ Ou   \cite{DO} considered operator-valued bilinear {multiplier} theorems that apply to certain non-lattice $\UMD$ spaces. The results of \cite{DO} may be thought of as a first attempt of generalization of Weis' $\mathcal R$-bounded multiplier theorem \cite{Weis}; however, the treatment of \cite{DO} relies upon additional assumptions on the triple of Banach spaces involved -- some bilinear variants of the RMF conditions appearing in \cite{HMP}. We return to
the role of RMF later.
In the present article,
we develop a complete multilinear operator-valued theory in the non-translation invariant setting and our assumptions are less restrictive. Firstly,
our bilinear theory is completely free of any RMF assumptions, providing the following complete generalization of the $T(1)$-theorem of Hyt\"onen and Weis \cite{HW}, and in particular of \cite{Weis}, to the bilinear case. The key notion for our statement is the $\mathcal R$-bound of a set of trilinear forms $B\in \mathcal B$, $B:X_1\times X_2 \times X_3 \to \mathbb C$. This is  defined as the best constant $C$ such that
\begin{equation}
\label{e:rboundintro}
\sum_{k=1}^N |B_k( x_{1,k},x_{2,k},x_3)| + |B_k( x_{1},x_{2,k},x_{3,k})| + |B_k( x_{1,k},x_{2},x_{3,k})|  \leq C
\end{equation}
for all choices $\{B_1,\ldots, B_N\} \subset \mathcal B$ and integers $N$, for all sequences $\{x_{j,k}\in X_j:k=1,\ldots,N\}$ with $\|x_{j,k}\|_{\mathrm{Rad}(X_j)}\leq 1$ and  vectors $x_j\in X_j$ with $\|x_j\|_{X_j}\leq 1$, for $j=1,2,3$. A satisfactory analogy with the usual notion of $\mathcal R$-boundedness of bilinear forms (adjoint to linear operators) \cite{HW} is the following: for each  fixed $x_3 $ in the unit ball of $X_3$, the bilinear forms $B(\cdot,\cdot,x_3)$ are $\mathcal R$-bounded on $X_1\times X_2$ in the usual sense.

\begin{thm} \label{th:introbil} Let $X_1,X_2,X_3$ be $\UMD$ Banach spaces and $Y_3$ be the Banach dual of $X_3$. Let $T$ be a bilinear SIO on $\R^d$  whose kernel $K$ takes values in bounded bilinear operators from $X_1 \times X_2$ to $Y_3$ and  satisfies $\mathcal R$-boundedness versions, in the sense of \eqref{e:rboundintro} above, of the kernel smoothness and weak-boundedness properties, and some  $T(1)\in \mathrm{BMO}$ properties, see Definition \ref{defn:T1Cond}.
Then
\[
\begin{split}
&  \quad \| T(f_1,f_2) \|_{L^{q_{3}}(\R^d;Y_{3})}
\lesssim   \prod_{m=1}^2 \| f_m\|_{L^{p_m}(\R^d;X_m)},\\ &\forall 1<p_1,p_2\leq \infty, \quad \textstyle \frac{1}{2}<q_3<\infty, \quad  \frac{1}{p_1}+ \frac{1}{p_2}= \frac{1}{q_3}. \end{split}
\]
\end{thm}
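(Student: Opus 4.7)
My plan is to follow the two-step philosophy of the linear Hyt\"onen-Weis theorem \cite{HW}, adapted to the bilinear setting. After testing $T(f_1,f_2)$ against $f_3 \in L^{q_3'}(\R^d;X_3)$ to produce a trilinear form $\langle T(f_1,f_2),f_3\rangle$, the goal is first to represent this form as a uniformly convergent series, in the complexity parameters, of bilinear operator-valued dyadic model operators (shifts and paraproducts) averaged over independent random dyadic grids; and second to establish trilinear bounds $L^{p_1}(X_1)\times L^{p_2}(X_2)\times L^{q_3'}(X_3)\to\C$ for the model operators, with decay in complexity sufficient to sum. The natural trilinear form setting is exactly what motivates the definition in \eqref{e:rboundintro}: paired with each configuration of entries we shall need to invoke a different instance of that bound.

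\textbf{Dyadic representation and model bounds.} For the representation, I would expand $f_1, f_2, f_3$ in martingale differences with respect to three independent random dyadic grids, perform the standard good/bad cube dichotomy, discard bad cubes in expectation, and reorganize what remains into bilinear operator-valued dyadic shifts $S^{i_1,i_2,i_3}$ and bilinear paraproducts. The $\calR$-kernel smoothness and weak-boundedness hypotheses from Definition \ref{defn:T1Cond} produce complexity-decaying $\calR$-bounds on the shift coefficients, while the $T(1) \in \BMO$ hypotheses bound the paraproduct symbols. To estimate a shift $S^{i_1,i_2,i_3}$, I would introduce independent Rademacher random signs, apply the $\UMD$ property on $X_1, X_2, X_3$ successively to convert each martingale-difference sum into a Rademacher average, and invoke the relevant term of \eqref{e:rboundintro} to eliminate the kernel coefficients; a summation over the cubes using Stein's martingale inequality then produces the desired estimate with a $2^{-\delta(i_1+i_2+i_3)}$ factor. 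The paraproducts are handled by bilinear $H^1$-$\BMO$ duality in the $\UMD$-valued framework. A final interpolation with the easier off-diagonal configurations -- notably the $\BMO$-valued output endpoints when some $p_i=\infty$ -- yields the full H\"older range $\frac{1}{p_1}+\frac{1}{p_2}=\frac{1}{q_3}$.

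\textbf{Main obstacle.} The crux is the simultaneous martingale-difference analysis across three distinct $\UMD$ spaces, tied only by the scalar output of the trilinear form. One must set up the expansion so that, in each summand of a given dyadic shift, exactly one of the three entries enters as a single vector (coming from an average over a cube) while the other two enter as Rademacher-bounded sums (coming from martingale differences over children cubes), in one of the three symmetric patterns accommodated by \eqref{e:rboundintro}. This three-fold symmetry -- matching the three Banach spaces involved -- is exactly what allows the bilinear result to avoid any RMF hypothesis; when $n\geq 3$ the same bookkeeping produces configurations with more than one ``average'' entry and more than two ``Rademacher'' entries, which is precisely why the higher-degree theory in the paper will require the additional structural $\RMF$-type assumption introduced later.
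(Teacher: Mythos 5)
Your overall architecture is the paper's: test against $f_3$, represent the trilinear form as an average over random dyadic grids of bilinear operator-valued shifts and paraproducts with complexity decay (Theorem \ref{thm:Rep}), then bound the model operators using the $\calR$-bounds of \eqref{e:rboundintro} together with the $\UMD$ property (Theorems \ref{thm:MultiShifts} and \ref{thm:MultiPara}); your explanation of why $n=2$ needs no RMF hypothesis (one fixed-vector slot, two Rademacher slots in each configuration) matches the paper's admissible-partition bookkeeping. Two of your implementation choices differ, though. The representation is carried out with a \emph{single} random lattice $\calD_\omega$ in which all three functions are expanded: dyadic shifts are by definition attached to one lattice, and the common-parent construction that sorts separated/nearby/diagonal pairs into shifts of complexity $(k_1,k_2,k_3)$ breaks down if $f_1,f_2,f_3$ live on three independent grids, so that part of your sketch would have to be reworked. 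Also, the paper's shift estimate rests on the decoupling inequality \eqref{eq:DecEst} applied after rewriting the shift with operator-valued kernels, not on Stein's inequality; Stein's inequality and a stopping-time/Pythagoras argument (rather than bilinear $H^1$--$\BMO$ duality) are what handle the paraproduct.

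The genuine gap is the final step. The stated range $1<p_1,p_2\le\infty$, $\tfrac12<q_3<\infty$ includes quasi-Banach targets $q_3\le 1$ and infinite exponents, and your only argument for it is ``a final interpolation with the easier off-diagonal configurations, notably the BMO-valued output endpoints.'' This is unsubstantiated and is precisely where the remaining work lies: randomization/decoupling estimates for a shift give bounds for a fixed H\"older tuple of exponents in $(1,\infty)$, and in the operator-valued $\UMD$ setting one cannot pass below $q_3=1$ or up to $p_m=\infty$ by interpolating with a BMO endpoint alone. One needs either weak-type endpoints obtained from a vector-valued multilinear Calder\'on--Zygmund decomposition, or, as the paper does, a sparse domination of the model operators (Lemma \ref{lem:SparseModel}, invoked inside Theorems \ref{thm:MultiShifts} and \ref{thm:MultiPara}) followed by the standard sparse-to-$L^p$ implication \eqref{eq:consequenceofsparse}. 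Without such a mechanism your argument yields only the reflexive Banach range with $q_3>1$, not the theorem as stated.
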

Theorem \ref{th:introbil} is a particular case of Theorem \ref{cor:main}. For a detailed description of the assumptions as well as for stronger sparse bound type variants, the reader should consult these results in the main body of the article.

The RMF property of a Banach space $X$, involving $L^p$ estimates for a certain analogue of the Hardy-Littlewood maximal operator obtained by replacing uniform bounds with $\calR$-bounds, dates back  to the work of Hyt\"onen, McIntosh and Portal on the vector-valued Kato square root problem \cite{HMP}: see also \cite{HK1,Kemp1,Kemp2}. The recent multilinear vector-valued (but not operator-valued) setup of
\cite{DLMV1} avoids the use of RMF assumptions in all linearities, arguing by induction on the multilinearity index. On the other hand, the inductive argument of \cite{DLMV1} relies on an abstract assumption modeling the   H\"older type structure typical of concrete examples of Banach $n$-tuples,  such as that of non-commutative $L^p$ spaces with the exponents $p$ satisfying the natural H\"older relation.   Operator-valued analogs of the H\"older-type structures of \cite{DLMV1} is left for future work. In the present article, the $n\geq 3$ analog of Theorem \ref{th:introbil} requires   that the $(n+1)$-tuple of spaces involved obeys to a multilinear version of the RMF assumption, which is described in detail in Subsection \ref{sec:RMF}.

A precise statement of the  $T(1)$-theorem for an  $n$-linear SIO on $\R^d$ with  $\mathcal L(X_1 \times \cdots \times X_n, X_{n+1}^*)$-valued kernel (see Section \ref{sec:MultiSing}
for this notation), when $n=3 $ and higher,  is provided in Theorem \ref{cor:main}. Here, we remark that the RMF setup of Subsection \ref{sec:RMF} applies in the following cases in addition to the trivial $X_j=\mathbb C$ for all $1\leq j \leq n+1$,  see Examples   \ref{ex:LpRMF} and \ref{ex:NCLP} for details:
\begin{itemize}
\item $X_j=L^{p_j}(\Omega; Z_j)$, whenever $1<p_j<\infty$ for all $1\leq j \leq n+1$ is a H\"older tuple and $Z_j$ is a tuple of UMD Banach spaces for which $\mathrm{RMF}$ holds; by iterating this observation, $X_j$ may be any tuple of reflexive Banach mixed norm $L^p$ spaces;
\item let $\mathcal J \subset \{1,\dots,n+1\}$ be a subset of cardinality 3, and for   $j\in \mathcal J$, let $X_j=L^{p_j}(\mathcal A)$, where $1<p_j<\infty$ are as before, and $L^p(\mathcal A)$ is the noncommutative $L^p$ space associated to the von Neumann algebra $\mathcal A$ equipped with a normal, semifinite, faithful trace $\mathcal \tau$, while for $j\in \{1,\ldots, n+1\} \setminus \mathcal J $, $X_j=\mathbb C$. \end{itemize}
The restriction to having at most three non-commutative spaces in the   second example comes from the RMF assumption. Again, this is in contrast with in the recent vector-valued (but not operator-valued) setting of  \cite{DLMV1} where such restriction is unnecessary regardless of the  multilinearity index.

\subsection{Tools and techniques}
Dyadic analysis is an extremely flexible tool, for example, as shown by its role in the Banach space-valued singular integral theory \cite{Fig}, non-doubling singular integral theory \cite{NTV}, and sharp weighted inequalities \cite{Hy}. The dyadic representation theorem of Hyt\"onen \cite{Hy}, which extends the Hilbert transform case of Petermichl \cite{pet2000,Pet}, yields a decomposition of the cancellative part of a singular integral into so-called {\em dyadic shifts}. These shifts have a very natural form generalising the Haar multipliers
$$
  f=\sum_{Q\in\mathcal D}\pair{f}{h_Q}h_Q\mapsto \sum_{Q\in\mathcal D}\lambda_Q\pair{f}{h_Q}h_Q, \qquad |\lambda_Q| \le 1.
$$

H\"anninen and Hyt\"onen \cite{HH} developed the theory of operator-valued shifts in their proof of a $T1$ theorem and a representation theorem
for linear singular integrals on $\UMD$ spaces with operator-valued kernels. See also the paper by Hyt\"onen, Martikainen  and Vuorinen \cite{HMV} for further theory
and applications of operator-valued shifts in the multi-parameter setting.
As an important technical component of this article,  we prove an $n$-linear version of the operator-valued representation theorem, Theorem \ref{thm:Rep}.
Theorem \ref{thm:Rep} is in fact a multilinear, operator-valued generalization of the bilinear, scalar-valued representation theorem which appeared in \cite{LMOV} by Li, Martikainen, Y.\ Ou and Vuorinen.

The next step in our analysis is to show the boundedness of these various multilinear operator-valued dyadic model operators (Theorem \ref{thm:MultiShifts} and Theorem \ref{thm:MultiPara}).  This is quite involved, particularly when working in higher linearities, and requires the development of new abstract theory concerning e.g. the correct notions of $\calR$-boundedness, cf.\
 Section \ref{sec:RbddandRMF}. A combination of the representation theorem with bounds for the model operators yields our main result,  Theorem \ref{cor:main},  which is a boundedness criterion for operator-valued multilinear SIOs.
\subsection{Applications to multiparameter theory}
The size of the singularity of the kernel $K$ in \eqref{eq:SIO} is a fundamental classifying criteria for SIOs.
In classical SIO theory, the appearing kernels are singular exactly when $x=y$. This one-parameter
theory differs from the multi-parameter theory, where the singularities of the kernels are spread over all the hyperplanes of the form $x_i=y_i$, where $x,y\in\R^d$ are written as $x=(x_i)_{i=1}^t\in\R^{d_1}\times\cdots\times\R^{d_t}$ for a given partition $d=d_1+\ldots+d_t$.

The basic philosophy of identifying bi-parameter operators as operator-valued one-parameter operators dates back, at least, to Journ\'e \cite{Jour}. In general settings the $\calR$-boundedness plays an important role. For instance, it is required as an input to apply the abstract results on operator-valued dyadic shifts. Indeed, the $\calR$-boundedness of families of one-parameter operators is necessary for the boundedness (both with or without $\calR$-) of the bi-parameter operators.

In the multilinear setting this general idea is more involved to execute due to the nature of the multilinear $\calR$-boundedness conditions  -- an interesting difference compared to the linear theory. In fact, the notions of multilinear $\calR$-boundedness we use in our previously discussed main results are so weak that they do not appear to be sufficient to conclude the $\calR$-boundedness for families of dyadic model operators. This is why we develop stronger $\calR$-boundedness notions in Section \ref{sec:Rbound}. These can be applied in the multi-parameter context, as detailed  in Section \ref{sec:multilinmultipar}.

A somewhat loose description of our multi-parameter  results is the following. Suppose $X_1, X_2, Y_3$ are UMD spaces with Pisier's property $(\alpha)$. Then bilinear multi-parameter operator-valued shifts (see Section \ref{sec:multilinmultipar}) have the  $L^{p_1}(\R^d; X_1) \times L^{p_2}(\R^d; X_2) \to L^{q_3}(\R^d; Y_3)$ bound whenever $p_1, p_2, q_3 \in (1,\infty)$ with $1/p_1 + 1/p_2 = 1/q_3$. While we do not anymore explicitly pursue the corresponding
(paraproduct free) SIO theory in the bilinear multi-parameter operator-valued setting, this would simply follow from our result on the shifts coupled with a suitable
representation theorem.

\subsection*{Acknowledgement}
F. Di Plinio has been partially supported by the National Science Foundation under the grants   NSF-DMS-1650810 and  NSF-DMS-1800628. 

K. Li was supported by Juan de la Cierva - Formaci\'on 2015 FJCI-2015-24547, by the Basque Government through the BERC
2018-2021 program and by Spanish Ministry of Economy and Competitiveness
MINECO through BCAM Severo Ochoa excellence accreditation SEV-2017-0718
and through project MTM2017-82160-C2-1-P funded by (AEI/FEDER, UE) and
acronym ``HAQMEC''.

H. Martikainen was supported by the Academy of Finland through the grants 294840 and 306901, and by the three-year research grant 75160010 of the University of Helsinki.
He is a member of the Finnish Centre of Excellence in Analysis and Dynamics Research (Academy of Finland project No. 307333).

E. Vuorinen was supported by the Academy of Finland through the grant 306901, by the Finnish Centre of Excellence in Analysis and Dynamics Research, and by
Jenny and Antti Wihuri Foundation.

\section{Definitions and preliminaries}
\subsection{Dyadic notation}\label{sec:randomlattice}
We begin by defining the random dyadic grids that are needed for the probabilistic--dyadic techniques. These
definitions are, for example, as in Nazarov--Treil--Volberg \cite{NTV} and Hyt\"onen \cite{Hy2}.
For each $\omega \in \Omega$, where $\Omega=(\{0,1\}^d)^{\Z}$, we define the lattice
$$
\calD_\omega
= \{Q+\omega \colon Q \in \calD_0\},
$$
where $\calD_0=\{2^{-k}([0,1)^d+m) \colon k \in \Z, m \in \Z^d\}$ is the standard dyadic lattice in $\R^d$
and
$$
Q+\omega:=Q+\sum_{k \colon 2^{-k} < \ell(Q)} \omega_k2^{-k}.
$$
Here the side length of $Q$ is denoted by $\ell(Q)$.
The randomness to $\omega \mapsto \calD_{\omega}$ is induced by equipping $\Omega$ with the natural probability product measure $\bbP$.

Let $X$ be a Banach space and $\calD$ be some fixed dyadic lattice.
Let $L^p(X)=L^p(\R^d;X)$, $p \in (0, \infty]$, be the usual Bochner space of $X$-valued functions.
For a fixed $Q \in \calD$ and $f \in L^1_{\loc}(X)$ we define as follows.
\begin{itemize}
\item If $k \in \Z$, $k \ge 0$, then $Q^{(k)}$ denotes the unique cube $R \in \calD$ for which $Q \subset R$ and
$\ell(Q) = 2^{-k}\ell(R)$.
\item The dyadic children of $Q$ are denoted by $\ch (Q) = \{Q' \in \calD\colon (Q')^{(1)} = Q\}$.
\item An average over $Q$ is $\langle f \rangle_Q = \frac{1}{|Q|} \int_Q f$. We also write $E_Q f=\langle f \rangle_Q 1_Q$.
\item The martingale difference $\Delta_Q f$ is defined by $\Delta_Q f= \sum_{Q' \in \ch (Q)} E_{Q'} f - E_{Q} f$.
\item For $k \in \Z$, $k \ge 0$, we define the martingale difference and average blocks
$$
\Delta_Q^k f=\sum_{\substack{R \in \calD \\ R^{(k)}=Q}} \Delta_{R} f
\quad \text{and} \quad
E_Q^k f=\sum_{\substack{R \in \calD \\ R^{(k)}=Q}} E_{R} f.
$$
\end{itemize}

\subsubsection*{Haar functions}
Haar functions are useful for further decomposing martingale differences $\Delta_Q f$ in terms of rank-one operators.
If $I \subset \R$ is an interval, denote by $I_{l}$ and $I_{r}$ the left and right
halves of the interval $I$, respectively. We define $h_{I}^0 = |I|^{-1/2}1_{I}$ and $h_{I}^1 = |I|^{-1/2}(1_{I_{l}} - 1_{I_{r}})$.
Let now $Q = I_1 \times \cdots \times I_d \in \calD$, and define the Haar function $h_Q^{\eta}$, $\eta = (\eta_1, \ldots, \eta_d) \in \{0,1\}^d$, by setting
\begin{displaymath}
h_Q^{\eta} = h_{I_1}^{\eta_1} \otimes \cdots \otimes h_{I_d}^{\eta_d}.
\end{displaymath}
If $\eta \ne 0$ the Haar function is cancellative: $\int h_Q^{\eta} = 0$. We can now write $\Delta_Q f = \sum_{\eta \ne 0} \langle f, h_{Q}^{\eta}\rangle h_{Q}^{\eta}$,
where $\langle f, h_Q^{\eta} \rangle = \int f h_Q^{\eta}$.
Usually, we exploit notation by suppressing the presence of $\eta$, and simply write $h_Q$ for some $h_Q^{\eta}$, $\eta \ne 0$.
Similarly, we write $\Delta_Q f = \langle f, h_Q \rangle h_Q$.

\subsection{Definitions and properties related to Banach spaces}\label{sec:BanachProperties}
We present the required basics of Banach space theory now -- for an extensive treatment see the books \cite{HNVW1, HNVW2} by Hyt\"onen, van Neerven, Veraar and Weis.

We say that $\{\varepsilon_k\}_k$ is a \emph{collection of independent random signs}, if the following holds.
We have $\varepsilon_k \colon \calM \to \{-1,1\}$, where $(\calM, \rho)$ is a measure space, the collection
$\{\varepsilon_k\}_k$ is independent and
$\rho(\{\varepsilon_k=1\})=\rho(\{\varepsilon_k=-1\})=1/2$.
In what follows, $\{\varepsilon_k\}_k$ will always denote a collection of independent random signs.

Suppose $X$, equipped with the norm $| \cdot |_X$, is a Banach space.
For all $x_1,\dots, x_M \in X$ and $p,q \in (0,\infty)$ there holds that
\begin{equation}\label{eq:KK}
\Big(\E \Big | \sum_{m=1}^M \varepsilon_m x_m \Big |_X^p \Big)^{1/p}
\sim \Big(\E \Big | \sum_{m=1}^M \varepsilon_m x_m \Big |_X^q \Big)^{1/q}
\end{equation}
by the Kahane-Khintchine inequality.
Motivated by this we set
$$
\|(x_m)\|_{\Rad(X)} := \Big(\E \Big | \sum \varepsilon_m x_m \Big |_X^2 \Big)^{1/2},
$$
where the choice of the exponent is thus of no consequence.
The Kahane contraction principle tells us that if $(a_m)_{m=1}^M$ is a sequence of scalars and $p \in (0, \infty]$, then we have
\begin{equation}\label{eq:KCont}
\Big( \E \Big | \sum_{m=1}^M \varepsilon_m a_m x_m \Big |_{X}^p \Big)^{1/p}
\lesssim \max |a_m| \Big( \E \Big | \sum_{m=1}^M \varepsilon_m x_m \Big |_{X}^p \Big)^{1/p}.
\end{equation}
A minor remark is that \eqref{eq:KCont} holds with ``$\le$'' in place of ``$\lesssim$'', if $p \in [1, \infty]$ and $a_m \in \R$ (see \cite{HNVW1}).

A Banach space $X$ is said to be a $\UMD$ \emph{space}, where UMD stands for unconditional martingale differences, if for all $p \in (1,\infty)$, all
$X$-valued $L^p$-martingale difference sequences $(d_j)_{j=1}^k$ and signs $\epsilon_j \in \{-1,1\}$
we have
\begin{equation}\label{eq:UMDDef}
\Big\| \sum_{j=1}^k \epsilon_j d_j \Big \|_{L^p(X)}
\lesssim \Big\| \sum_{j=1}^k d_j \Big \|_{L^p(X)}.
\end{equation}
The $L^p(X)$-norm is with respect to the measure space where the martingale differences are defined.
In fact, a standard property of UMD spaces is that if \eqref{eq:UMDDef} holds for one $p_0\in (1,\infty)$, then it
holds for all $p \in (1, \infty)$.

Sometimes, for example in multi-parameter analysis, the following property is also needed:
for all $N$, all scalars $a_{i,j}$ and all
$e_{i,j} \in X$, $1 \le i, j \le N$, there holds
$$
\Big(\E \E' \Big| \sum_{1 \le i, j \le N} \varepsilon_i \varepsilon_j' a_{i,j} e_{i,j} \Big|_X^2\Big)^{1/2}
\lesssim \max_{i,j} |a_{i,j}| \Big(\E \E' \Big| \sum_{1 \le i, j \le N} \varepsilon_i \varepsilon_j' e_{i,j} \Big|_X^2 \Big)^{1/2}.
$$
If this holds, the Banach space $X$ is said to satisfy the property $(\alpha)$ of Pisier.

\subsubsection*{Random sums and duality}
The reader can e.g. consult the section 7 of the book \cite{HNVW2}, if he or she is unfamiliar with the notions of type and cotype of a Banach space.
What is important for us, though, is simply that all UMD spaces have non-trivial type.
The next lemma appears in Section 7.4.f of \cite{HNVW2}.
\begin{lem}\label{lem:randomduality}
Let $X$ be a Banach space with non-trivial type and let $F \subset X^*$ be a closed subspace of $X^*$ which is norming for $X$. Then
for all finite sequences $e_1, \ldots, e_N \in X$ we have
$$
\E \Big| \sum_{i=1}^N \varepsilon_i e_i \Big|_X \sim \sup \Bigg\{ \Big| \sum_{i=1}^N \langle e_i, e_i^*\rangle \Big|\Bigg\},
$$
where the supremum is taken over all choices $(e_i^*)_{i=1}^N$ in $F$ such that
$$
\E \Big| \sum_{i=1}^N \varepsilon_i e_i^* \Big|_{X^*} \le 1.
$$
\end{lem}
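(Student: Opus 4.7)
The $\lesssim$ bound is immediate. For $(e_i^*) \in F^N$ with $\E|\sum_j \varepsilon_j e_j^*|_{X^*} \le 1$, orthogonality of the Rademachers yields $\sum_i \langle e_i, e_i^*\rangle = \E \bigl\langle \sum_i \varepsilon_i e_i, \sum_j \varepsilon_j e_j^*\bigr\rangle$, and Cauchy--Schwarz followed by the Kahane--Khintchine inequality \eqref{eq:KK} estimates this by a constant times $\E|\sum_i \varepsilon_i e_i|_X$. Note that $|\cdot|_{X^*}$ and $|\cdot|_F$ coincide on $F$.

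For the $\gtrsim$ bound, my plan is to first handle the case $F = X^*$ using K-convexity and then reduce the general case to it. Nontrivial type of $X$ gives, by Pisier's theorem, that $X$ and $X^*$ are K-convex, so the Rademacher projection on $L^2(\Omega;X^*)$ is bounded uniformly in $N$. The standard finite-dimensional duality via K-convexity then provides
\[
\E\Bigl|\sum_i \varepsilon_i e_i\Bigr|_X \sim \sup\Bigl\{\Bigl|\sum_i \langle e_i, y_i^*\rangle\Bigr|:\ (y_i^*) \in (X^*)^N,\ \E\Bigl|\sum_j \varepsilon_j y_j^*\Bigr|_{X^*} \le 1\Bigr\},
\]
which is the conclusion in the case $F = X^*$.

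To pass from this to a general closed norming subspace $F \subset X^*$, I would localize to the finite-dimensional subspace $G = \lspan(e_1, \ldots, e_N) \subset X$. The restriction map $R \colon X^* \to G^*$ is norm-decreasing and preserves the pairings $\sum \langle e_i, \cdot\rangle$, and since $G$ inherits the K-convexity constants of $X$, applying the $F=X^*$ case to $G$ itself shows the supremum above is still equivalent to $\E|\sum_i \varepsilon_i e_i|_X$ when $(X^*)^N$ is replaced by $(G^*)^N$. The norming property of $F$ is equivalent, via the bipolar theorem, to the weak-$*$ density of $B_F$ in $B_{X^*}$; combined with the finite-dimensionality of $G^*$ (so that weak-$*$ convergence on $G^*$ is norm convergence), this gives, by iteration, that $R\colon F \to G^*$ is surjective with $R(B_F) \supset (1+\eta)^{-1} B_{G^*}$ for each $\eta > 0$. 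A local reflexivity-type argument applied to the pair $(F, X^*)$ then yields, for any $(g_i^*) \in (G^*)^N$ with $\|(g_i^*)\|_{\Rad(G^*)} \le 1$, a lift $(e_i^*) \in F^N$ with $R(e_i^*) = g_i^*$ and $\|(e_i^*)\|_{\Rad(F)} \le 1+\eta$. Since $R$ preserves the pairings, combining all steps completes the argument.

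The main technical obstacle is this final lifting step. A direct linear lift obtained by dualizing an Auerbach basis of $G^*$ only produces a Rademacher-norm bound proportional to $\dim G^* \le N$, so dimension-free control requires genuinely exploiting the almost-isometric openness $R(B_F) \supset (1+\eta)^{-1}B_{G^*}$ through a principle-of-local-reflexivity-type argument, rather than any naive coordinate-wise construction.
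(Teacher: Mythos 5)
Your easy direction and your treatment of the case $F=X^*$ (Pisier's theorem: non-trivial type $\Rightarrow$ $K$-convexity, then the standard duality of Rademacher sums) are fine; note that the paper itself offers no proof but simply cites \cite[Section 7.4.f]{HNVW2}, where exactly this duality -- including the norming-subspace version -- is established. The genuine gap in your argument is the final lifting step, which you yourself flag as the main obstacle but never carry out: from $(g_i^*)\in (G^*)^N$ with $\|(g_i^*)\|_{\Rad(G^*)}\le 1$ you need $(e_i^*)\in F^N$ with $R(e_i^*)=g_i^*$ and $\E\big|\sum_i\varepsilon_i e_i^*\big|_{X^*}$ controlled, and a ``principle-of-local-reflexivity-type argument'' does not deliver this. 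What almost-isometric openness $R(B_F)\supset(1+\eta)^{-1}B_{G^*}$ gives you is a norm-controlled lift of each individual vector, equivalently a pointwise lift $u(\epsilon)\in F$ of the $G^*$-valued function $\epsilon\mapsto\sum_i\varepsilon_i(\epsilon)g_i^*$ with $\|u\|_{L^2(\Omega;X^*)}\lesssim 1$; but $u$ is no longer a Rademacher sum, and to extract functionals $e_i^*=\E[\varepsilon_i u]\in F$ with $\E\big|\sum_i\varepsilon_i e_i^*\big|_{X^*}\lesssim 1$ you must apply the Rademacher projection in $L^2(\Omega;X^*)$, i.e.\ invoke $K$-convexity of $X^*$. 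In particular the asserted bound $\|(e_i^*)\|_{\Rad(F)}\le 1+\eta$ is not what such an argument can produce (a loss of the $K$-convexity constant is unavoidable), and without this projection step the reduction from $F=X^*$ to a general norming $F$ simply does not close.

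Moreover, once you allow yourself the Rademacher projection on the dual side, the whole detour through $G$ and the lifting become superfluous: since $F$ is norming for $X$, simple $F$-valued functions norm $L^1(\Omega;X)$ on the finite cube, so one picks a simple $g\colon\Omega\to F$, $\|g\|_{L^\infty(\Omega;X^*)}\le 1$, with $\E\big\langle\sum_i\varepsilon_i e_i,g\big\rangle\ge(1-\delta)\,\E\big|\sum_i\varepsilon_i e_i\big|_X$, and sets $e_i^*:=\E[\varepsilon_i g]\in F$; then $\sum_i\langle e_i,e_i^*\rangle=\E\big\langle\sum_i\varepsilon_i e_i,g\big\rangle$ while $\E\big|\sum_i\varepsilon_i e_i^*\big|_{X^*}\lesssim K\|g\|_{L^2(\Omega;X^*)}\le K$ by $K$-convexity. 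This two-line argument is the one behind the cited result in \cite{HNVW2}, and it is the missing content your sketch defers to an unproved lifting claim.
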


\subsubsection*{The decoupling inequality}
The following decoupling estimates originate from McConnell \cite{Mc}, but in their current form they essentially appear in Hyt\"onen \cite{Hy2}
and H\"anninen--Hyt\"onen \cite{HH}.
We record a special case of the decoupling estimate that is of relevance for us.

Let $\calD$ be a dyadic lattice and $Q \in \calD$.
With $\calV_Q$ we mean the probability measure space $$\calV_Q=(Q, \Leb(Q), |Q|^{-1} \ud x \lfloor Q),$$
where $|Q|^{-1} \ud x \lfloor Q$ is the normalized Lebesgue measure restricted to $Q$
and $\Leb(Q)$ stands for the Lebesgue measurable subsets of $Q$.
We define the product probability space $\calV= \prod_{Q \in \calD} \calV_Q$,
and let $\nu$ be the related measure. If $y \in \calV$, we denote the coordinate related to $Q$
by $y_Q$.

Let $k \in \{0,1,2, \dots\}$, $j \in \{0, \dots, k\}$ and
define the sub-lattice $\calD_{j,k} \subset \calD$
by setting
\begin{equation}\label{eq:SubLattice}
\calD_{j,k}=\{Q \in \calD \colon \ell(Q)=2^{m(k+1)+j} \text{ for some } m \in \Z\}.
\end{equation}
If $X$ is $\UMD$, $p \in (1, \infty)$ and $f \in L^p(X)$,
Theorem 6 in \cite{HH} implies that
\begin{equation}\label{eq:DecEst}
\int_{\R^d} \Big | \sum_{Q \in \calD_{j,k}} \Delta^l_Q f(x)  \Big |_X^p \ud x
\sim \E \int_{\R^d} \int_{\calV} \Big | \sum_{Q \in \calD_{j,k}} \varepsilon_Q 1_Q(x)\Delta^l_Q f(y_Q)  \Big |_X^p \ud \nu(y) \ud x
\end{equation}
for any $l \in \{0, 1, \ldots, k\}$.
The point of the subcollections $\calD_{j,k}$ is that now $\Delta^l_Q f$ is constant on
every $Q' \in \calD_{j,k}$ such that $Q' \subsetneq Q$. This is required by the abstract decoupling theorems.

\subsubsection*{Pythagoras' theorem}
A collection $\mathcal{S}$ of cubes in $\R^d$ is said to be \emph{$\eta$-sparse} (or just sparse), $\eta \in (0,1)$, if the following holds. For all $Q\in\mathcal{S}$ there exists a subset $E_Q\subset Q$ so that $|E_Q|>\eta|Q|$ and the sets $E_Q$ are mutually disjoint. The point of the following theorem is that
sparse collections are essentially as good as disjoint collections for some $L^p$ estimates.

Let $\calD$ be a dyadic lattice, $\calS \subset \calD$ be sparse and $X$ be a Banach space, and
suppose that for every $S \in \calS$ we have a function $f_S \colon \R^d \to X$ such that
$f_S$ is supported in $S$,  $\int f_S \ud x=0$ and $f_S$ is constant on those $S' \in \calS$ such that $S' \subsetneq S$.
Then, Lemma 4 in \cite{HH} -- Pythagoras' theorem for functions adapted to a sparse collection -- says that
\begin{equation}\label{eq:Pythagoras}
\Big \| \sum_{S \in \calS} f_S \Big \|_{L^p(X)}^p
\sim \sum_{S \in \calS} \| f_S \|_{L^p(X)}^p.
\end{equation}

\subsection{Multilinear operator-valued singular integrals}\label{sec:MultiSing}
We specify the class of operators that we study.
First, we define the operator-valued basic kernels.
Let $2 \le n \in \Z$ and let $X_1, \dots, X_n, Y_{n+1}$ be Banach spaces.
We denote by $\calL(X_1 \times \cdots \times X_n, Y_{n+1})$ the space of $n$-linear operators $B \colon X_1 \times \cdots \times X_n \to Y_{n+1}$ satisfying
$$
|B(x_1, \ldots, x_n)|_{Y_{n+1}} \le C \prod_{m=1}^n |x_m|_{X_m},
$$
and the best constant $C$ is denoted by $\|B\|_{X_1 \times \cdots \times X_n \to Y_{n+1}}$.
We will sometimes write $B$ acting on $(x_1,\dots, x_n)$ as above and sometimes like
$B[x_1, \dots, x_n]$.

Suppose $K$ is a function
$$
K \colon\R^{d(n+1)}\setminus \Delta \to \calL\Big(\prod_{m=1}^n X_m, Y_{n+1}\Big),
\quad \Delta=\{(x_1, \dots, x_{n+1}) \in\R^{d(n+1)} \colon x_1=\dots =x_{n+1}\},
$$
such that for all $e_m \in X_m$, $m \in \{1, \dots, n\}$,
the function
$$
x \mapsto K(x)[e_1, \dots, e_n] \in Y_{n+1}
$$
is strongly measurable.
Define the collection of $n$-linear operators
$$
\calC_{\size}(K)
=\Big\{ \Big(\sum_{m=2}^{n+1} |x_1-x_m|\Big)^{dn} K(x_1, \dots, x_{n+1}) \colon
(x_1, \dots, x_{n+1}) \in\R^{d(n+1)} \setminus \Delta \Big\}.
$$
For $\alpha \in (0,1]$ and $j \in \{1, \dots, n+1\}$ let $\calC_{\alpha,j}(K)$ be the collection of the operators
\begin{equation}\label{eq:Hol}
|x_j-x_j'|^{-\alpha}\Big(\sum_{m=2}^{n+1} |x_1-x_m|\Big)^{dn+\alpha}  (K(x)-K(x')),
\end{equation}
where $x=(x_1, \dots, x_{n+1}) \in\R^{d(n+1)} \setminus \Delta$ and
$x'=(x_1, \dots, x_{j-1},x_j',x_{j+1},\dots x_{n+1}) \in\R^{d(n+1)}$ satisfy
$$
|x_j-x_j'| \le 2^{-1} \max_{2 \le m \le n+1} |x_1-x_m|.
$$
We say that $K$ is an \emph{operator-valued $n$-linear basic kernel}
if there exists $\alpha \in (0,1]$ so that the families
$\calC_{\size}(K)$ and $\calC_{\alpha,m}(K)$, $m \in \{1, \dots, n+1\}$, are uniformly bounded. We also write
$\calC_{\CZ,\alpha}(K)=\calC_{\size}(K) \cap \bigcap_{m=1}^{n+1}\calC_{\alpha,m}(K)$.

If $X$ is a Banach space, we denote by $L^\infty_c(X)$ the functions in $L^\infty(X)$ with
compact support.
Let $K$ be an operator-valued basic kernel as above.
Let $T$ be an $n$-linear operator defined on tuples of functions $(f_1, \dots, f_n)$, where
$f_m \in L^\infty_c(X_m)$,
so that $T(f_1, \dots, f_n) \in L^1_{\loc}(Y_{n+1})$. We say that $T$ is an
\emph{$n$-linear operator-valued singular integral operator (SIO)} related to the kernel $K$ if
$$
\langle T(f_1, \dots, f_n), f_{n+1} \rangle=
\int_{\R^{d(n+1)}} \langle K(x_{n+1},x_1, \dots, x_n)[f_1(x_1), \dots, f_n(x_n)], f_{n+1}(x_{n+1}) \rangle \ud x
$$
whenever $f_m \in L^\infty_c (X_m)$,  $m=1, \dots, n+1$, are such that
$\supp f_i \cap \supp f_j = \emptyset$ for some $i \not= j$. Here we use the convention $X_{n+1} :=Y_{n+1}^*$ that is in force from this point on.
We are quite relaxed with the bracket notation $\langle \cdot, \cdot \rangle$ --
it means the natural duality pairing in each situation.

\subsection{$\BMO_p(X)$ and $T(1)$}\label{sec:BMOT1}
The representation theorem involves a certain $\BMO$ assumption related to ``$T1$''.
Since, as usual, $T1$ is not necessarily well defined as a function the $\BMO$ condition is stated in
terms of the pairings $\langle T1, h_Q \rangle$ (we recall below how to define these pairings). Therefore,
we define the $\BMO$ conditions for collections of elements of a Banach space.

Let $X$ be a Banach space and $\calD$ be a dyadic lattice. Suppose
$a=(a_Q)_{Q \in \calD} \subset X$ is a collection of elements of $X$ and let $\calD' \subset \calD$ be a finite subcollection.
Let $p \in (0,\infty)$. We define
$$
\|a\|_{\BMO_{\calD',p}(X)}
=\sup_{Q_0 \in \calD} \Big( \frac{1}{|Q_0|}
\E \Big\| \sum_{\substack{ Q \in \calD' \\ Q \subset Q_0}} \epsilon_Q a_Q \frac{1_Q}{|Q|^{1/2}} \Big\|_{L^p(X)}^p \Big)^{1/p},
$$
and then we define $\|a\|_{\BMO_{\calD,p}(X)}$ to be the supremum of $\|a\|_{\BMO_{\calD',p}(X)}$
over all finite subcollections $\calD' \subset \calD$.
Notice that if $\calD'$ and  $\calD''$ are two finite subcollections such that
$\calD' \subset \calD''$, then by Kahane's contraction principle \eqref{eq:KCont}
there holds that $\|a\|_{\BMO_{\calD',p}(X)} \le \|a\|_{\BMO_{\calD'',p}(X)}$.

The $X$-valued John-Nirenberg inequality for adapted sequences, Theorem 3.2.17 in \cite{HNVW1},
implies that
\begin{equation}\label{eq:JN}
\|a\|_{\BMO_{\calD,p}(X)} \sim \|a\|_{\BMO_{\calD,q}(X)}, \qquad 0 < p, q < \infty.
\end{equation}
Indeed, let $\calD' \subset \calD$ be finite.
Fix some $0<q<p< \infty$ such that $p \ge 1$.
Let $\{\varepsilon_Q \}_{Q \in \calD}$ be a collection of independent random signs on a probability space $\Omega$.
Let $Y= L^p(\Omega;X)$. From Theorem 3.2.17 in \cite{HNVW1} we deduce that
\begin{equation}\label{eq:bmowithp}
\sup_{k \in \Z} \sup_{\substack{Q_0 \in \calD \\ \ell(Q_0) \ge 2^{-k}}}  \Big ( \frac{1}{|Q_0|} \int_{Q_0}
\Big | \sum_{\substack{Q \in \calD' \\ Q \subset Q_0 \\ \ell(Q) \ge 2^{-k}}}
\varepsilon_Q a_Q \frac{1_Q}{|Q|^{1/2}} \Big|_Y^p \Big)^{1/p}
\end{equation}
is comparable to
\begin{equation}\label{eq:bmowithq}
\sup_{k \in \Z} \sup_{\substack{Q_0 \in \calD \\ \ell(Q_0) \ge 2^{-k}}}  \Big ( \frac{1}{|Q_0|} \int_{Q_0}
\Big | \sum_{\substack{Q \in \calD' \\ Q \subset Q_0 \\ \ell(Q) \ge 2^{-k}}}
\varepsilon_Q a_Q \frac{1_Q}{|Q|^{1/2}} \Big|_Y^q \Big)^{1/q}.
\end{equation}
In view of Kahane's contraction principle \eqref{eq:KCont} (it allows to remove the restriction $\ell(Q) \ge 2^{-k}$ inside the $Y$-norm)
we have that \eqref{eq:bmowithp}  is equal to $\|a\|_{\BMO_{\calD',p}(X)}$.
Likewise, \eqref{eq:bmowithq} is equal to
\begin{equation*}
 \sup_{Q_0 \in \calD }  \Big ( \frac{1}{|Q_0|} \int_{Q_0}
\Big | \sum_{\substack{Q \in \calD' \\ Q \subset Q_0}}
\varepsilon_Q a_Q \frac{1_Q}{|Q|^{1/2}} \Big|_Y^q \Big)^{1/q}
\sim \|a\|_{\BMO_{\calD',q}(X)},
\end{equation*}
where we applied the  Kahane-Khintchine inequality \eqref{eq:KK}.



Let $X_1, \dots, X_n$ and $Y_{n+1}$ be Banach spaces.
With respect to these spaces, suppose $T$ is an $n$-linear
SIO with a basic kernel $K$ as in Section \ref{sec:MultiSing}.
We turn to define the pairings $\langle T1, h_Q \rangle$ and other similar pairings.

Let $\Phi=(\phi_1, \dots, \phi_n)$ be an $n$-tuple of scalar-valued bounded functions.
 Assume that $Q \subset \R^d$ is a cube, and that
 $\varphi_Q \colon \R^d \to \C$ is a bounded function supported in
$Q$ with $\int \varphi_Q \ud x=0$.
Let $C \ge 2 \sqrt d$.
By $CQ$ we denote the cube with the same center as $Q$ and with side length $C\ell(Q)$.
If $e_m \in X_m$, $m \in \{1, \dots, n\}$, we define
\begin{equation}\label{eq:DefGlobalPairing}
\begin{split}
\big \langle T(\phi_1e_1, \dots, \phi_ne_n), \varphi_Q \big \rangle
&:=  \sum_{m=1}^n \big \langle T(\wt \phi_1^m e_1, \dots, \wt \phi^m_n e_n), \varphi_Q \big \rangle \\
&+\big \langle T(1_{CQ} \phi_1e_1, \dots, 1_{CQ} \phi_n e_n), \varphi_Q \big \rangle,
\end{split}
\end{equation}
where $\wt \phi^m_l= 1_{CQ}\phi_l$ for $1 \le l < m$, $\wt \phi^m_m= 1_{(CQ)^c} \phi_m$ and $ \wt \phi^m_l = \phi_l$ for $m<l \le n$,
and $\big \langle T(\wt \phi_1^m e_1, \dots, \wt \phi^m_n e_n), \varphi_Q \big \rangle \in Y_{n+1}$ is defined using the kernel $K$ by the formula
$$
\int_{\R^d} \int_{\R^{dn}} (K(x,y)-K(c_Q,y)) [\wt \phi_1^m(y_1)e_1, \dots, \wt \phi_n^m (y_n)e_n] \varphi_Q(x) \ud y \ud x.
$$
The uniform boundedness of the operators \eqref{eq:Hol} combined with $\supp  \wt \phi^m_m \subset (CQ)^c$
imply that this integral is absolutely convergent.
The definition of $\big \langle T(\phi_1e_1, \dots, \phi_ne_n), \varphi_Q \big \rangle $
is independent of the constant $C \ge 2 \sqrt d$.

Now, we define the $n$-linear operator
$
\langle T \Phi, \varphi_Q \rangle \colon X_1 \times \dots \times  X_n \to Y_{n+1}
$
by
\begin{equation}\label{eq:PairingOp}
\langle T \Phi, \varphi_Q \rangle [e_1, \dots, e_n]
:=\big \langle T(\phi_1e_1, \dots, \phi_ne_n), \varphi_Q \big \rangle.
\end{equation}
By $\langle T1, h_Q \rangle$ we mean the operator $\langle T\Phi, h_Q \rangle$ with
$\Phi=(1, \dots,1)$. The $\BMO$ condition related to the pairings $\langle T1, h_Q \rangle$
which appears in the representation theorem will be formulated in Definition \ref{defn:T1Cond}.

\subsection{Multilinear operator-valued shifts}
Let $2 \le n \in \Z$ and suppose $X_1, \dots, X_n,Y_{n+1}$ are Banach spaces.
Assume $k=(k_1, \dots, k_{n+1})$, $0 \le k_i \in \Z$.
Let $\calD$ be a dyadic lattice in $\R^d$.
An $n$-linear dyadic shift $S_{\calD}^k$ is an operator of the form
$$
S_{\calD}^k (f_1,\dots,f_n)
=\sum_{K \in \calD} \sum_{\substack{Q_1, \dots, Q_{n+1} \in \calD \\ Q_i^{(k_i)}=K}}
a_{K,(Q_i)}[ \langle f_1, \wt h_{Q_1} \rangle, \dots, \langle f_n, \wt h_{Q_n} \rangle] \wt h_{Q_{n+1}},
$$
where $f_m \in L^1_{\loc}(X_m)$ and $a_{K,(Q_i)}:=a_{K,Q_1,\dots, Q_{n+1}} \in \calL(\prod_{m=1}^n X_m,Y_{n+1})$.
In addition, we demand the following.
There exist two indices $j_0,j_1 \in \{1, \dots, n+1\}$, $j_0 \not =j_1$, so that
$\wt h_{Q_i}=h_{Q_i}$ if $i \in \{j_0,j_1\}$ and $\wt h_{Q_i}=h_{Q_i}^0$ if $i \not \in \{j_0,j_1\}$;
in other words there are two specified slots where the Haar functions are cancellative and in all the other slots they are
non-cancellative. One may think that only finitely many of the operators $a_{K,(Q_i)}$ are non-zero so that the shift is well defined for
locally integrable functions.
If $S^k_\calD$ is a shift as above, we denote by $\calC(S^k_\calD)$ the family of the normalized coefficient operators
\begin{equation}\label{eq:NormCoef}
\calC(S^k_\calD)=\Big \{ \frac{|K|^n}{\prod_{m=1}^{n+1}|Q_m|^{1/2}}a_{K,(Q_i)} \colon K , Q_1, \dots, Q_{n+1} \in \calD, Q_i^{(k_i)}=K\Big\}.
\end{equation}
In Section \ref{sec:ShiftBdd} we show the boundedness of shifts under certain conditions on $\calC(S^k_\calD)$ and the underlying Banach spaces.

\subsection{Multilinear operator-valued paraproducts}
Let $\calD$ be a dyadic lattice in $\R^d$.
Suppose $X_1, \dots, X_n,Y_{n+1}$ are Banach spaces and $a_Q \in \calL(\prod_{m=1}^n X_m ,Y_{n+1})$, $Q \in \calD$, are given.
Let $a = (a_Q)_{Q \in \calD}$.
An operator-valued $n$-linear paraproduct is an operator of the form
$$
\pi_{\calD, a}(f_1, \dots, f_n) :=\sum_{Q \in \calD} a_Q [\langle f_1 \rangle_Q, \dots, \langle f_n \rangle_Q ] h_Q,
$$
where $f_m \in L^1_{\loc}(X_m)$.
As with dyadic shifts, this is well defined for example if only finitely many of the operators $a_Q$
are non-zero. In Section \ref{sec:ParaBdd} we consider the boundedness of paraproducts.

\subsection{Bounding dyadic operators by sparse operators}
The boundedness of shifts and paraproducts will be considered in Sections \ref{sec:ShiftBdd} and \ref{sec:ParaBdd}.
 Here, we formulate an analogue of the sparse domination results of \cite{CDPO,CDPOMRL,LMOV} in multilinear, operator-valued setting of Theorem \ref{cor:main} below. The proof follows exactly the outline of the multilinear version of \cite{LMOV}. We refer to the above references for the, by now standard, definitions and generalities on sparse collections and forms.

\begin{lem}\label{lem:SparseModel}
Let $1 \le n \in \Z$. Let $X_1, \dots, X_n$ and $Y_{n+1}$ be Banach spaces and $X_{n+1}:= Y_{n+1}^*$.
Suppose we have functions $f_m \in L^\infty_c( X_{m})$, $m=1,\dots, n+1$.
Let $\calD$ be a dyadic grid and $\eta \in (0,1)$.
Then there exists an $\eta$-sparse collection $\mathcal{S}=\mathcal{S}((f_m), \eta)\subset \calD$ so that the following holds.

Let $k=(k_1, \dots, k_{n+1})$, $0 \le k_i \in \Z$,
and assume $p_1, \dots, p_{n+1} \in (1, \infty)$ are such that $\sum_{m=1}^{n+1} 1/p_m = 1$.
Suppose that we have operators
$a_{K,Q_1,\dots, Q_{n+1}} \in \calL(\prod_{m=1}^n X_m,Y_{n+1})$,
where $K , Q_1, \dots, Q_{n+1} \in \calD$ and  $Q_i^{(k_i)}=K$,  such that
$a_{K,(Q_i)}:=a_{K,Q_1,\dots, Q_{n+1}}$ satisfies
$$
|\langle a_{K,(Q_i)}[e_1, \ldots, e_{n}], e_{n+1} \rangle| \le A_1 \frac{\prod_{m=1}^{n+1}|Q_m|^{1/2}}{|K|^n} \prod_{m=1}^{n+1} |e_m|_{X_m}.
$$
Suppose further that for some scalar-valued functions $u_{m,Q} = \sum_{Q' \in \ch(Q)} c_{m,Q'} 1_{Q'}$
satisfying $|u_{m,Q}| \le |Q|^{-1/2}$
the operators
$$
U_{\calD'}(g_1, \ldots, g_n)
:= \sum_{K \in \calD'} \sum_{\substack{Q_1, \dots, Q_{n+1} \in \calD \\ Q_i^{(k_i)}=K}}
a_{K,(Q_i)}[ \langle g_1, u_{1,Q} \rangle, \dots, \langle g_n, u_{n,Q} \rangle] u_{n+1,Q}, \qquad \calD' \subset \calD,
$$
satisfy
$$
|\langle U_{\calD'}(g_1, \ldots, g_n), g_{n+1}\rangle| \le A_2 \prod_{m=1}^{n+1} \|g_m\|_{L^{p_i}(X_m)}, \qquad g_1, \ldots, g_{n+1} \in L^{p_i}(X_m).
$$
Then we have
\begin{equation}\label{SparseForDMOs}
|\langle U_{\calD}(f_1, \ldots, f_n), f_{n+1}\rangle|
\lesssim_{\eta} (A_1+A_1\kappa + A_2) \sum_{Q\in\mathcal{S}}|Q|\prod_{m=1}^{n+1}\bla |f_m|_{X_m} \bra_Q,
\end{equation}
where $\kappa = \max k_m$.
\end{lem}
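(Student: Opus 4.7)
The plan is to follow the principal-cube/stopping-time recipe from the scalar multilinear sparse domination in \cite{LMOV} (and its abstract dyadic model operator analogues in \cite{CDPO,CDPOMRL}). The Banach space-valued nature enters only through the scalar averages $\langle |f_m|_{X_m}\rangle_Q$, so the classical stopping-time machinery transfers verbatim; the boundedness hypothesis $A_2$ for \emph{every} subcollection $\calD'\subset \calD$ is what allows one to localize in the main step, while the pointwise size bound $A_1$ absorbs all the error terms produced by the overshoot of the shift parameter $\kappa=\max k_m$.

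\textbf{Construction of $\mathcal S$ and decomposition of the form.} First, by partitioning the supports of the $f_m$ into a disjoint family of top cubes in $\calD$, one reduces to showing the claim separately on each top cube; so fix a single $Q_0\in \calD$ containing $\bigcup_m \supp f_m$. Build $\mathcal S\subset \calD$ by a standard recursion: $Q_0\in \mathcal S$, and given $L\in \mathcal S$, declare as $\mathcal S$-children of $L$ the maximal $L'\subsetneq L$ for which
\[
\max_{1\le m\le n+1} \frac{\langle |f_m|_{X_m}\rangle_{L'}}{\langle |f_m|_{X_m}\rangle_L}\; >\; C_0,
\]
where $C_0=C_0(\eta,n)$ is chosen large enough that $\sum_{L'\in\mathrm{ch}_{\mathcal S}(L)} |L'|\le (1-\eta)|L|$, so that $\mathcal S$ is $\eta$-sparse. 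For each $K\in\calD$ with $K\subset Q_0$ let $\pi(K)\in\mathcal S$ be the smallest stopping cube containing $K$, and group the form as
\[
\langle U_\calD(f_1,\dots,f_n),f_{n+1}\rangle = \sum_{L\in\mathcal S}\Lambda_L, \qquad \Lambda_L := \sum_{K:\pi(K)=L}\sum_{(Q_i):Q_i^{(k_i)}=K} \big\langle a_{K,(Q_i)}[\langle f_1,u_{1,Q_1}\rangle,\dots],\langle f_{n+1},u_{n+1,Q_{n+1}}\rangle\big\rangle.
\]

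\textbf{Local estimate on each $\Lambda_L$.} Split $\Lambda_L=\Lambda_L^{\mathrm{in}}+\Lambda_L^{\mathrm{out}}$, where $\Lambda_L^{\mathrm{in}}$ collects the tuples for which every $Q_i$ fails to be contained in any $\mathcal S$-child of $L$, and $\Lambda_L^{\mathrm{out}}$ is the complementary sum. For $\Lambda_L^{\mathrm{in}}$, the inner products $\langle f_m,u_{m,Q_m}\rangle$ depend only on $f_m$ restricted to $E_L := L\setminus \bigcup_{L'\in \mathrm{ch}_{\mathcal S}(L)}L'$, and by the stopping rule combined with the Lebesgue differentiation theorem one has $|f_m|_{X_m}\le C_0\langle |f_m|_{X_m}\rangle_L$ a.e.\ on $E_L$. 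Apply the hypothesis on $U_{\calD'}$ with $\calD'=\{K:\pi(K)=L\}$ and $g_m:=f_m \mathbf 1_{E_L}$, then use H\"older's inequality $\sum 1/p_m=1$:
\[
|\Lambda_L^{\mathrm{in}}|\le A_2 \prod_{m=1}^{n+1}\|g_m\|_{L^{p_m}(X_m)}\le A_2 C_0^{n+1}|L|\prod_{m=1}^{n+1}\langle |f_m|_{X_m}\rangle_L.
\]
For $\Lambda_L^{\mathrm{out}}$, at least one index $i$ has $Q_i\subset L'$ for some $\mathcal S$-child $L'$ of $L$; there are only $\kappa+1$ possible scales of $K$ for which some $Q_i$ with $Q_i^{(k_i)}=K$ lies inside such an $L'$. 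Bounding $|\langle f_m,u_{m,Q_m}\rangle|\le |Q_m|^{1/2}\langle |f_m|_{X_m}\rangle_{Q_m}$ and using the size estimate $A_1$ on $a_{K,(Q_i)}$, one collapses the $(Q_i)$-sums via $\sum_{Q_m^{(k_m)}=K} |Q_m|\langle |f_m|_{X_m}\rangle_{Q_m}=|K|\langle |f_m|_{X_m}\rangle_K$, and the remaining sum over $K$'s with $K\subset L$ or $K\supset$ some $L'$ yields at most a $\kappa$-fold overlap; using the stopping control $\langle |f_m|_{X_m}\rangle_K \le C_0\langle|f_m|_{X_m}\rangle_L$ for the permissible $K$'s one obtains
\[
|\Lambda_L^{\mathrm{out}}| \lesssim A_1(1+\kappa) |L| \prod_{m=1}^{n+1} \langle |f_m|_{X_m}\rangle_L.
\]

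\textbf{Conclusion and main difficulty.} Summing $|\Lambda_L^{\mathrm{in}}|+|\Lambda_L^{\mathrm{out}}|$ over $L\in\mathcal S$ produces the claimed sparse bound with constant $\lesssim_\eta A_1(1+\kappa)+A_2$. The main technical obstacle is the careful bookkeeping in the $\Lambda_L^{\mathrm{out}}$ estimate: because the $u_{m,Q}$ are not required to be cancellative (only constant on children of $Q$ with $|u_{m,Q}|\le |Q|^{-1/2}$), one cannot exploit orthogonality to kill long-range pieces, so \emph{all} error terms must be controlled by the pointwise size bound $A_1$. The depth-$\kappa$ of the shift forces the $(1+\kappa)$ loss; tracking the precise overlap between the $K$-parameters and descendant stopping cubes so that the remaining sums telescope to the sparse form $|L|\prod_m\langle|f_m|_{X_m}\rangle_L$ is where the detailed work lies.
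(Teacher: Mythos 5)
Your overall architecture is exactly the stopping-time argument that the paper invokes by reference to the multilinear version of \cite{LMOV}: build $\mathcal S$ from the functions alone via average-stopping (so it is independent of $k$, the $a_{K,(Q_i)}$ and the $u_{m,Q}$, as the statement requires), use the subcollection hypothesis $A_2$ for the local ``good'' part, and pay $A_1(1+\kappa)$ for the tuples that dip into stopping children. However, one step fails as written. In $\Lambda_L^{\mathrm{in}}$ you claim that the pairings $\langle f_m, u_{m,Q_m}\rangle$ depend only on $f_m\mathbf 1_{E_L}$ and you apply the $A_2$ hypothesis with $g_m:=f_m\mathbf 1_{E_L}$. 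This is false: a cube $Q_m$ that is not \emph{contained} in any stopping child may still \emph{contain} stopping children $L'\subsetneq Q_m$ entirely (e.g.\ $Q_m=K=L$ when $k_m=0$), and since $u_{m,Q_m}$ is merely constant on the children of $Q_m$ (not cancellative), the pairing picks up the full contributions $\int_{L'}f_m\,u_{m,Q_m}$, which are not seen by $f_m\mathbf 1_{E_L}$. The identity $\Lambda_L^{\mathrm{in}}=\langle U_{\calD'}(g_1,\dots,g_n),g_{n+1}\rangle$ therefore does not hold, and the $A_2$ bound cannot be invoked for this choice of $g_m$. The standard repair is to use the collapsed (stopped) functions
$g_m=f_m\mathbf 1_{E_L}+\sum_{L'\in\ch_{\mathcal S}(L)}\langle f_m\rangle_{L'}\mathbf 1_{L'}$,
exactly as in \eqref{eq:DefBlock} of the paraproduct proof: then $\langle f_m,u_{m,Q}\rangle=\langle g_m,u_{m,Q}\rangle$ for every $Q\subset L$ not contained in a stopping child (the difference integrates $f_m-\langle f_m\rangle_{L'}$ against a constant on each $L'\subsetneq Q$), one still has $\|g_m\|_{L^\infty(X_m)}\lesssim C_0\langle|f_m|_{X_m}\rangle_L$, and the discrepancy between $\langle U_{\calD'}(g_1,\dots,g_n),g_{n+1}\rangle$ and the sum over ``in'' tuples consists again of ``out''-type tuples, which are absorbed into the $A_1(1+\kappa)$ estimate. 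Equivalently, split $f_m=g_m+b_m$ with $b_m=\sum_{L'}(f_m-\langle f_m\rangle_{L'})\mathbf 1_{L'}$ and expand multilinearly; $\langle b_m,u_{m,Q_m}\rangle=0$ unless $Q_m$ lies inside some $L'$, which is what makes your scale-counting ($\le\kappa$ admissible ancestors $K$ of each $L'$) kick in cleanly.

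A second, more minor, gap is the opening reduction: a single $Q_0\in\calD$ containing all the supports need not exist in a fixed grid, and, more to the point, the sum defining $U_{\calD}$ runs over \emph{all} $K\in\calD$, including cubes much larger than (or not comparable to) any chosen top cubes; since the $u_{m,Q}$ are non-cancellative these terms do not vanish, so the claim does not literally ``reduce to each top cube separately.'' One handles these large-scale $K$ directly with the $A_1$ size bound: each such term is $\le A_1|K|^{-n}\prod_m\|f_m\|_{L^1(X_m)}$, the number of relevant $K$ per scale is dimensionally bounded, and the geometric sum in $\ell(K)$ contributes $\lesssim A_1|Q^*|\prod_m\langle|f_m|_{X_m}\rangle_{Q^*}$ for suitable minimal cubes $Q^*$ covering the supports, which are then included in $\mathcal S$. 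With these two repairs your $\Lambda_L^{\mathrm{out}}$ counting and the final summation over $\mathcal S$ give precisely the claimed bound $\lesssim_\eta(A_1+A_1\kappa+A_2)$.
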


We conclude this preliminary section by  recalling the well-known fact  that (weighted) boundedness in the full range of expected exponents  follows from a sparse estimate of the type stated in Lemma \ref{lem:SparseModel}. A proof of this exact statement is given in \cite{DLMV1}, and further consequences in the weighted setting are  formulated in \cite{CDPOBP,LMOV} and references therein.
Let $X_1, \dots, X_n$ and $Y_{n+1}$ be Banach spaces.
If $T$ is an operator such that
for all tuples  $f_m \in L^\infty_c(X_m)$,  there exists a dyadic lattice $\calD=\calD((f_m))$
and a sparse collection $\calS=\calS((f_m)) \subset \calD$
so that
$$
| \langle T(f_1, \dots, f_n),f_{n+1} \rangle |
\lesssim  \sum_{Q\in\mathcal{S}}|Q|\prod_{m=1}^{n+1}\bla |f_m|_{X_m} \bra_Q,
$$
then \begin{equation}\label{eq:consequenceofsparse}
\| T(f_1, \dots, f_n) \|_{L^{q_{n+1}}(Y_{n+1})}
\lesssim \prod_{m=1}^n \| f_m \|_{L^{p_m}(X_m)},
\end{equation}
where $p_m \in (1, \infty]$ are such that $1/{q_{n+1}}:= \sum_{m=1}^n 1/p_m>0$.

\section{Random{ize}d boundedness and the $\RMF$ property}\label{sec:RbddandRMF}

\subsection{Random{ize}d boundedness}
In this section we discuss  randomized boundedness conditions  for families of multilinear operators.
First, we recall the well-known concept of $\calR$-boundedness of linear operators.
If $X_1$ and $Y_2$ are Banach spaces and $\calT \subset \calL(X_1,Y_2)$, we say that $\calT$ is $\calR$-bounded if
there exists a constant $C$ such that for all integers $l \ge 1$, all $T_k \in \calT$
and for all $e_{k,1} \in X_1$, $e_{k,2} \in X_2 := Y_2^*$, $k = 1, \ldots, l$, the inequality
$$
\Big| \sum_{k=1}^l  \big\langle  T_k e_{k,1}, e_{k,2} \big\rangle \Big| \le C \|(e_{k,1})\|_{\Rad(X_1)} \|(e_{k,2})\|_{\Rad(X_2)}
$$
holds. The smallest constant $C$ is denoted by $\calR(\calT)$, and called the $\calR$-boundedness constant of $\calT$.
If $\calT$ is not $\calR$-bounded we set $\calR(\calT)=\infty$.
\begin{rem}\label{rem:differentR}
Suppose that $Y_2$ has non-trivial type (e.g. $Y_2$ is a UMD space).
Let $\wt \calR(\calT)$ denote the best constant $C$ such that
\begin{equation}\label{eq:StandRBdd}
\Big( \E \Big| \sum_{k=1}^l \varepsilon_k T_ke_k \Big |_{Y_2}^2  \Big)^{1/2}
\le C \Big ( \E \Big| \sum_{k=1}^l \varepsilon_k e_k \Big |_{X_1}^2 \Big)^{1/2}
\end{equation}
holds for all $e_{k} \in X_1$. Then we have by Lemma \ref{lem:randomduality} that
$$
\wt \calR(\calT) \lesssim \calR(\calT) \le \wt \calR(\calT).
$$
In fact, \eqref{eq:StandRBdd} is the most commonly appearing standard definition of $\calR$-boundedness.
\end{rem}

For a positive integer $n$, we write $\mathcal J_n$ for the discrete interval $\{1,\ldots, n+1\}$.
Throughout this section, let $X_1, \dots, X_n, X_{n+1}$ be reflexive Banach spaces and denote $Y_{j}=X_{j}^*$.
Below, we    customarily enumerate $\mathcal J\subset \mathcal J_n$ increasingly so that $1 \le j_1 < \dots < j_\ell \le n+1$.
 We use the tuple  notation
\[
(e_j)_{j \in \calJ}:= (e_{j_1}, \dots, e_{j_\ell}) \in X_{\mathcal J}\coloneqq \prod_{j\in \mathcal J} X_{j}.\]

The following discussion pertains to the case $n\geq 3$. We set some notation for the multilinear $\calR$-boundedness condition associated to an $(n+1)$-linear
 contraction
\begin{equation}\label{eq:Contr}
\varpi \colon X_{\mathcal J_n}  \to \C, \qquad
| \varpi(e_1, \dots, e_{n+1})| \le \prod_{m=1}^{n+1} |e_m|_{X_m}, \quad e_m \in X_m.
\end{equation}
 This condition will involve  suitable partitions of the set of indices $\mathcal J_n$.
 We say that $\mathcal P=( \{j_{\mathcal P}\}, \mathcal P_{\Rad}, \mathcal P_{\RM})$ is an admissible partition of $
\mathcal J_n$ if $\{j_{\mathcal P}\}, \mathcal P_{\Rad}, \mathcal P_{\RM}\subset \mathcal J_n$ are pairwise disjoint, their union is $\calJ_n$
and  $\#\mathcal P_{\RM}\leq n- 2$.
Let now  $  \mathcal J \subset \mathcal J_n $ with  $1\leq \# \mathcal J \leq n-2$ and  $v \in \mathcal J_n\setminus \mathcal J$.
For a set $A \subset X_{\mathcal J}$, define
\begin{equation}\label{eq:DefSupu}
\begin{split}
\| A \|_{\RM_v(\varpi, \calJ)}
&= \sup  \Big| \sum_{k=1}^K  \varpi (e_{1,k}, \ldots, e_v,\ldots, e_{n+1,k}) \Big|,
\end{split}
\end{equation}
where the supremum is taken over all $K \in \mathbb N$ and over all choices of
\begin{itemize}
\item tuples $(e_{j,k})_{j \in \calJ} \in A, 1\leq k \leq K$,
\item elements $e_v \in X_v$ with  $|e_v|_{X_v}\leq 1$,
\item sequences $ ( e_{j,k})_{k=1}^K \subset X_j$ with $ \| ( e_{j,k})_{k=1}^K  \|_{\Rad(X_j)} \le 1$,
where \[j \in \mathcal J_{\Rad}\coloneqq \{1,\ldots,n+1\}\setminus (\mathcal J
\cup\{v\}) .\]
\end{itemize}
Here $(e_{j,k})_{k=1}^K$ denotes the sequence $e_{j,1}, \dots, e_{j,K}$ of elements of $X_j$,
and this should not be confused with the notation $(e_{j,k})_{j \in \calJ} \in X_\calJ$ meaning a tuple of elements; later, this distinction should be clear from the context.
Let also $\| A \|_{\RM'_v( \varpi, \calJ )}$ be defined just as
$\| A \|_{\RM_v( \varpi, \calJ)}$ in \eqref{eq:DefSupu}, except that in addition there is the requirement that the tuples $(e_{j,k})_{j \in \calJ} \in A$
satisfy $(e_{j,k})_{j \in \calJ} \not= (e_{j,k'})_{j \in \calJ}$
if $k \not = k'$.

\begin{lem}\label{lem:SupDistinct}
There holds that $\| A \|_{\RM_v'(\varpi,\calJ )} = \| A \|_{\RM_v(\varpi,\calJ)}$, that is, when testing the constant $\| A \|_{\RM_v(\varpi, \calJ)}$
it is enough to consider a sequence of distinct elements of $A$.
\end{lem}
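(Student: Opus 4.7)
The inequality $\|A\|_{\RM_v'(\varpi,\calJ)}\le\|A\|_{\RM_v(\varpi,\calJ)}$ is free, since the primed supremum is taken over a strictly smaller family of admissible configurations than the unprimed one. The aim is to prove the reverse: any admissible configuration for the unprimed supremum can be refined to one with pairwise distinct $\calJ$-tuples, without decreasing the modulus of the total sum.

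Fix $\delta>0$ and an unprimed witness $(K,\{(e_{j,k})_j\}_{k=1}^K,e_v)$ with value within $\delta$ of $\|A\|_{\RM_v(\varpi,\calJ)}$, and suppose two indices $k_1\ne k_2$ share the same $\calJ$-tuple $a$. The core step will be a \emph{pairwise merging} that combines $k_1,k_2$ into a single new index $k^*$ with the same $\calJ$-tuple $a$ and with $j$-slot vector $\tilde e_j:=e_{j,k_1}+\sigma_j^{*}e_{j,k_2}$ for a carefully chosen sign tuple $\sigma^{*}=(\sigma_j^{*})_{j\in\calJ_{\Rad}}\in\{\pm 1\}^{\calJ_{\Rad}}$. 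The crucial ingredient is the following identity, which uses the standing hypothesis $p:=\#\calJ_{\Rad}\ge 2$: letting $\E_{\sigma}$ denote the uniform average over $\sigma=(\sigma_j)_{j\in\calJ_{\Rad}}$ with $\prod_{j\in\calJ_{\Rad}}\sigma_j=1$, multilinear expansion and the vanishing $\E_{\sigma}\prod_{j\in S}\sigma_j=0$ for $\emptyset\subsetneq S\subsetneq\calJ_{\Rad}$ give
\[
\varpi\bigl(a,e_v,(e_{j,k_1})_j\bigr)+\varpi\bigl(a,e_v,(e_{j,k_2})_j\bigr)=\E_{\sigma}\,\varpi\bigl(a,e_v,(e_{j,k_1}+\sigma_j e_{j,k_2})_j\bigr).
\]
Choosing $\sigma^{*}$ maximizing the modulus of the integrand, one obtains $|\varpi(a,e_v,(\tilde e_j)_j)|\ge |\varpi(a,e_v,(e_{j,k_1})_j)+\varpi(a,e_v,(e_{j,k_2})_j)|$, so the replacement does not shrink the modulus of the total sum.

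The next step is to verify that the resulting configuration remains admissible, that is, that for each $j\in\calJ_{\Rad}$ the sequence obtained from $(e_{j,k})_{k=1}^K$ by replacing the entries indexed by $k_1,k_2$ with the single entry $\tilde e_j$ still has $\Rad(X_j)$-norm at most $1$. Observe that the new randomized sum
\[
\sum_{k\ne k_1,k_2}\varepsilon_k e_{j,k}+\varepsilon^{*}\tilde e_j
\]
arises from the old $\sum_{k}\varepsilon_k e_{j,k}$ by the identification $\varepsilon_{k_1}\mapsto \varepsilon^{*}$, $\varepsilon_{k_2}\mapsto\sigma_j^{*}\varepsilon^{*}$; since this is a contraction of the underlying Rademacher $L^2$-space, Kahane's contraction principle bounds the new Rad-norm by the old one. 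Iterating the pairwise reduction at most $K$ times eliminates all repetitions of $\calJ$-tuples; sending $\delta\to 0$ then yields the desired inequality.

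\textbf{Main obstacle.} The delicate point is the Rademacher-norm bookkeeping after each merging: the choice of $\sigma^{*}$ is dictated by its action on $\varpi$ and not on the Rad-norm, so one must exploit the structural constraint $\prod_j \sigma_j=1$ (which encodes the fact that the merging acts by a rank-one identification in the Rademacher space) to ensure contraction in every slot $j\in\calJ_{\Rad}$ simultaneously. This coupling between the Rad-sign correlation and the sign choice maximizing $|\varpi|$ is the reason the argument requires $p\ge 2$, and is the technical heart of the lemma.
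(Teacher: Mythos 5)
Your averaging identity over the sign subgroup $\{\sigma\in\{\pm1\}^{\calJ_{\Rad}}\colon\prod_{j}\sigma_j=1\}$ is correct, but the admissibility step that follows it contains a genuine gap. Identifying the two independent signs $\varepsilon_{k_1},\varepsilon_{k_2}$ with the coupled pair $\varepsilon^{*},\sigma_j^{*}\varepsilon^{*}$ is \emph{not} an instance of Kahane's contraction principle: that principle lets you multiply each independent sign by a scalar of modulus at most one, but it does not let you change the joint distribution by forcing two signs to coincide, and such a merge can strictly increase the randomized norm. Concretely, if $e_{j,k_1}=e_{j,k_2}=x$ with $|x|_{X_j}=2^{-1/2}$ and these are the only indices, then $\|(e_{j,k_1},e_{j,k_2})\|_{\Rad(X_j)}=1$, while the merged element $\tilde e_j=e_{j,k_1}+\sigma_j^{*}e_{j,k_2}$ with $\sigma_j^{*}=+1$ has norm $\sqrt2$, so the new configuration is inadmissible; and if all slots carry equal elements, the value-maximizing choice is exactly $\sigma_j^{*}=+1$ for every $j$ (which satisfies the product constraint), inflating every slot simultaneously. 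The constraint $\prod_j\sigma_j^{*}=1$ only couples signs \emph{across} slots and gives no control within a fixed slot, so it cannot rescue this step. Since merges must be iterated as long as repetitions remain, with multiplicities that are unbounded in $K$, the necessary renormalizations lose factors growing with the class sizes, and the proposal provides no mechanism to control them; as the lemma asserts the exact identity $\| A\|_{\RM_v'(\varpi,\calJ)}=\| A\|_{\RM_v(\varpi,\calJ)}$, a lossy reduction cannot suffice. A secondary, fixable point: choosing $\sigma^{*}$ to maximize the modulus of the single merged term does not by itself prevent the modulus of the \emph{whole} sum from decreasing (phases matter); one should maximize the modulus of the entire sum, which the same averaging identity permits.

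The paper's proof avoids deterministic merging altogether, and this is exactly what saves the bookkeeping. It groups the indices into classes $\calK_m$ sharing the same $\calJ$-tuple and writes the sum over each class as an expectation, over a chain of independent sign families $\{\varepsilon^j_k\}$, of the fixed multilinear form applied to the \emph{random} vectors $\sum_{k\in\calK_m}\varepsilon^{j-1}_k\varepsilon^{j}_k e_{i_j,k}$ (single sign factors at the endpoint slots). The $\RM_v'$ bound is applied pointwise in this randomness, using that the $M$ class tuples are distinct, and then Cauchy--Schwarz in the sign variables; the normalization comes out with no loss because $\wt\varepsilon_m\varepsilon^{j-1}_k\varepsilon^{j}_k$ is again a family of independent signs, so the resulting quantities equal $\|(e_{i_j,k})_{k=1}^K\|_{\Rad(X_j)}\le1$ exactly. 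In other words, the ``merged'' vectors must be kept random, with the expectation outside the estimate; committing to one deterministic sign pattern, as you do, is precisely what destroys the admissibility of the configuration.
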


\begin{proof}
Let  $\mathcal P$ be the admissible partition with $j_{\mathcal P}=v$, $\mathcal P_{\RM}=\mathcal J$.  Fix elements $e_{j,k} \in X_j$, where $k \in \{1, \dots, K\}$, as in the definition \eqref{eq:DefSupu}.
Write $\{1, \dots, K\}$ as a disjoint union $\bigcup_{m=1}^M \calK_m$, so that
$(e_{j,k})_{j \in \calJ}=(e_{j,k'})_{j \in \calJ}$ if $k$ and $k'$ belong  to the same $\calK_m$, and
$(e_{j,k})_{j \in \calJ} \not = (e_{j,k'})_{j \in \calJ}$ if $k$ and $k'$ belong to different sets $\calK_m$.
If $j \in \calJ$ and $k \in \calK_m$, we denote $e_{j,k}=:f_{j,m}$.
Write $\mathcal P_{\Rad}=\{i_1, \dots, i_u\}$, where $i_j < i_{j+1}$. To ease the notation, for $m \in \{1, \dots, M\}$ let $\Lambda_m$ be the
$u$-linear form obtained from $\varpi$ by keeping the elements $f_{j,m}$, $j \in \calJ$,  and $e_{v}$ fixed.
We have
$$
\sum_{k=1}^K \varpi(e_{1,k}, \dots, e_{n+1,k})
= \sum_{m=1}^M \sum_{k \in \calK_m} \Lambda_m(e_{i_1,k}, \dots, e_{i_u,k}).
$$

Fix one $m$ for the moment. Let $\{\varepsilon^j_k\}_{k=1}^K$, $j \in \{1, \dots, u-1\}$, be  collections of  independent random signs.
We denote the expectation with respect to the random variables $\varepsilon^j_k$ by $\E^j$, and write $\E=\E^1 \cdots \E^{u-1}$.
Then we have the identity
\begin{equation}\label{eq:MultiRS}
\begin{split}
\sum_{k \in \calK_m} & \Lambda_m(e_{i_1,k}, \dots, e_{i_u,k}) \\
&= \E \sum_{k_1, \dots, k_u \in \calK_m} \varepsilon^1_{k_1}\varepsilon^1_{k_2}
\varepsilon^2_{k_2}\varepsilon^2_{k_3} \cdots \varepsilon_{k_{u-1}}^{u-1}\varepsilon_{k_{u}}^{u-1}
\Lambda_m(e_{i_1,k_1}, \dots, e_{i_u, k_u}) \\
& =\E \Lambda_m \Big( \sum_{k_1 \in \calK_m} \varepsilon^1_{k_1} e_{i_1,k_1}, \sum_{k_2 \in \calK_m} \varepsilon^1_{k_2}\varepsilon^2_{k_2} e_{i_2,k_2},
\dots, \sum_{k_u \in \calK_m} \varepsilon^{u-1}_{k_u}e_{i_u,k_u} \Big).
\end{split}
\end{equation}

Now, we combine the last two equations and use the definition of $\| A \|_{\RM_v'(\varpi,\calJ)}$.
This gives that
\begin{equation*}
\begin{split}
\Big | & \sum_{k=1}^K \varpi(e_{1,k}, \dots, e_{n+1,k}) \Big | \\
&\le \| A \|_{\RM_v'(\varpi,\calJ)}
\E \Big \| \Big( \sum_{k \in \calK_m} \varepsilon^1_k e_{i_1,k} \Big)_{m=1}^M \Big \|_{\Rad(X_{i_1})}
\prod_{j=2}^{u-1} \Big \| \Big( \sum_{k \in \calK_m} \varepsilon^{j-1}_k \varepsilon^j_k  e_{i_j,k} \Big)_{m=1}^M \Big \|_{\Rad(X_{i_j})} \\
& \hspace{2,4cm} \times \Big \| \Big( \sum_{k \in \calK_m} \varepsilon^{u-1}_k e_{i_u,k} \Big)_{m=1}^M \Big \|_{\Rad(X_{i_u})},
\end{split}
\end{equation*}
where the expectation is less than
\begin{equation*}
\begin{split}
\Big(\E^1 \Big \| \Big( \sum_{k \in \calK_m} \varepsilon^1_k e_{i_1,k} \Big)_{m=1}^M \Big \|_{\Rad(X_{i_1})}^2 \Big)^{1/2}
&\prod_{j=2}^{u-1} \Big( \E^{j-1} \E^j
\Big \| \Big( \sum_{k \in \calK_m} \varepsilon^{j-1}_k \varepsilon^j_k  e_{i_j,k} \Big)_{m=1}^M \Big \|_{\Rad(X_{i_j})}^2 \Big)^{1/2} \\
& \hspace{0.25cm} \times \Big(\E^{u-1} \Big \| \Big( \sum_{k \in \calK_m} \varepsilon^{u-1}_k e_{i_u,k} \Big)_{m=1}^M \Big \|_{\Rad(X_{i_u})}^2 \Big)^{1/2}.
\end{split}
\end{equation*}
Denote the expectation and the random signs related to the $\Rad$-norms by $\wt {\E}$ and $\{\wt {\varepsilon}_m \}_{m=1}^M$.
We have
\begin{equation*}
\begin{split}
\E^{j-1} \E^j \Big \| \Big( \sum_{k \in \calK_m} \varepsilon^{j-1}_k \varepsilon^j_k  e_{i_j,k} \Big)_{m=1}^M \Big \|_{\Rad(X_{i_j})}^2
&=  \tilde{\E} \E^{j-1}\E^j \Big \| \sum_{m=1}^M \sum_{k \in \calK_m} \wt{\varepsilon}_m \varepsilon^{j-1}_k \varepsilon^j_k  e_{i_j,k} \Big \|_{X_{i_j}}^2\\
&= \| (e_{i_j,k})_{k=1}^K \|_{\Rad(X_{i_j})}^2 \le 1.
\end{split}
\end{equation*}
The remaining last two terms satisfy the corresponding estimate. Thus, we have finished the proof of
$\| A \|_{\RM_v(\varpi, \calJ)} \le \| A \|_{\RM'_v(\varpi, \calJ)}$. As the reverse estimate is immediate, the conclusion of the lemma follows.
\end{proof}

\begin{rem}\label{rem:LengthK}
We record an observation based on  Lemma \ref{lem:SupDistinct},    which will later be used without explicit mention.
Let again $\calJ \subset \calJ_n$, $1 \le \# \calJ \le n-2$ and $v\in \mathcal J_n\setminus \mathcal J$. Suppose that  $K \in \N$ and we have elements
$e_{j,k} \in X_j$, $j \in \calJ$, $k \in \{1, \dots, K\}$. Then
\begin{equation}\label{eq:LengthK}
\| \{(e_{j,k})_{j \in \calJ}\}_{k=1}^K \|_{\RM_v(\varpi, \calJ)}
= \sup \Big | \sum_{k=1}^K \varpi(e_{1,k}, \ldots, e_v,\ldots, e_{n+1,k}) \Big|,
\end{equation}
where the supremum is taken  over elements
$e_{j,k} \in X_j$, $j \in \calJ_n \setminus(\calJ \cup \{v\})$, $k \in \{1, \dots, K\}$, such that $\|(e_{j,k})_{k=1}^K\|_{\Rad(X_j)} \le 1$
and  over $e_v \in X_v$ with $|e_{v}|_{X_{v}} \le 1$.
Indeed, ``$\ge$'' is clear just by definition. On the other hand,
the right hand side of \eqref{eq:LengthK}  clearly satisfies $RHS\eqref{eq:LengthK} \ge   \| \{(e_{j,k})_{j \in \calJ}\}_{k=1}^K \|_{\RM'_v(\varpi, \calJ)}$.
\end{rem}

\begin{rem}
In the same setup as in the previous remark,
assume $\calJ=\calJ_0 \cup \calJ_1$, where $\calJ_1  \not = \varnothing$ and $\calJ_0 \cap \calJ_1 = \varnothing$.
Suppose we have elements $e_{j,k} \in X_j$, $j \in \calJ$, $k \in \{1, \dots, K\}$.
Then
\begin{equation}\label{eq:SupLel2}
\| \{ (e_{j,k})_{j \in \calJ} \}_{k=1}^K \|_{\RM_v(\varpi, \calJ)}
\le \| \{ (e_{j,k})_{j \in \calJ_0} \}_{k=1}^K \|_{\RM_v(\varpi, \calJ_0)}
\prod_{j \in \calJ_1} \| (e_{j,k})_{k=1}^K \|_{\Rad(X_j)}.
\end{equation}

To see this, we use Remark \ref{rem:LengthK}.
Let $\mathcal P=\{ \{v\}, \mathcal P_{\Rad}, \calJ\}$ be the corresponding admissible partition  and $e_{j,k} \in X_j$ for $j \in \mathcal P_{\Rad} \cup \{v\}$,
$k=1, \ldots, K$, and
assume that $e_{v,k}=e_{v,k'} =: e_{v}$.

Assume first that $\calJ_0 \not=\varnothing$.
Then, by definition, we have
\begin{equation*}
\begin{split}
\Big | & \sum_{k=1}^K \varpi(e_{1,k}, \dots, e_{n+1,k}) \Big | \\
&\le \| \{ (e_{j,k})_{j \in \calJ_0}\}_{k=1}^K \|_{\RM_v(\varpi,\calJ_0)}
\prod_{j \in \calJ_1 \cup \mathcal P_{\Rad}} \| (e_{j,k})_{k=1}^K \|_{\Rad(X_j)} |e_{v}|_{X_{v}},
\end{split}
\end{equation*}
which proves the claim.

On the contrary, if $\calJ_0 =\varnothing$,
then
using random signs as in \eqref{eq:MultiRS} and boundedness of $\varpi$ (Equation \eqref{eq:Contr}) we get that
\begin{equation*}
\Big |  \sum_{k=1}^K \varpi(e_{1,k}, \dots, e_{n+1,k}) \Big |
\le \prod_{j \in \calJ \cup \mathcal P _{\Rad}} \| (e_{j,k})_{k=1}^K \|_{\Rad(X_j)} |e_v|_{X_{v}},
\end{equation*}
which gives the claim.
\end{rem}

\begin{exmp}\label{ex:RMConst}
Let $(X_1, \ldots, X_{n+1})$ be a tuple of reflexive Banach spaces and $\varpi_0 \colon X_{\mathcal J_n} \to \C$ be as in \eqref{eq:Contr}. Let $(\Omega, \mu)$ be a measure space and associate the tuple
$$
(L^{p_1}(\Omega; X_1), \ldots, L^{p_{n+1}}(\Omega; X_{n+1})), \qquad p_m \in (1,\infty), \,\, \sum_{m=1}^{n+1} 1/p_m = 1,
$$
with the $(n+1)$-linear mapping $\varpi \colon \prod_{m=1}^{n+1} L^{p_m}(\Omega; X_m)  \to \C$,
\begin{equation}\label{eq:LpXContr}
\varpi(f_1, \ldots, f_{n+1}) := \int_{\Omega} \varpi_0(f_1(\omega), \ldots, f_{n+1}(\omega))\ud \mu(\omega).
\end{equation}
We obviously have
$$
|\varpi(f_1, \ldots, f_{n+1})| \le  \prod_{m=1}^{n+1} \|f_m\|_{L^{p_m}(\Omega; X_m)}.
$$

Suppose $\calJ \subset\calJ_n$, $1 \le \#\calJ \le n-2$ and $v\in \calJ_n\setminus \calJ$.
It is not hard to see that
$$
\|\{(f_{j,k})_{j \in \calJ}\}_{k=1}^\infty \|_{\RM_v(\varpi, \calJ)} \lesssim
\big\| \omega \mapsto \| \{ (f_{j,k}(\omega))_{j \in \calJ} \}_{k=1}^\infty \|_{\RM_v(\varpi_0, \calJ)} \big\|_{L^{p(\calJ)}(\Omega)},
$$
where
$$
1/p(\calJ) = \sum_{j \in \calJ} 1/p_j.
$$
We now demonstrate that the corresponding lower bound also holds.
We will show that
\begin{equation}\label{eq:Fixedu}
\| \{ (f_{j,k})_{j \in \calJ} \}_{k=1}^K \|_{\RM_v(\varpi, \calJ)}
\gtrsim \big\| \| \{ (f_{j,k}(\omega))_{j \in \calJ} \}_{k=1}^K \|_{\RM_v(\varpi_0, \calJ)} \big\|_{L^{p(\calJ)}(\Omega)}
\end{equation}
 and the claim follows by monotone convergence.

Write $\calI= \mathcal J_n \setminus \calJ$.
Using Remark \ref{rem:LengthK}, for $\omega \in \Omega$
let $\varphi_{i,k}(\omega) \in X_i$, $i \in \calI $, $k \in \{1, \dots, K\}$,
be such that $\| (\varphi_{i,k}(\omega))_{k=1}^K \|_{\Rad(X_i)} \le 1$ for $i \not =v $ and $\varphi_{v,k}(\omega)=\varphi_{v,k'}(\omega)
=: \varphi_{v}(\omega) $ satisfies $|\varphi_{v}(\omega)|_{X_v}  \le 1$.
Furthermore, let the elements $\varphi_{i,k}(\omega)$ be such that
$\varpi_0$ acting on $f_{j,k}(\omega)$ and $\varphi_{i,k}(\omega)$ is non-negative for all $k$
and such that the sum over $k$ of these is  $\gtrsim  \| \{ (f_{j,k}(\omega))_{j \in \calJ} \}_{k=1}^K \|_{\RM_v(\varpi_0, \calJ)}$. For  $i \in \calI$ write
$$
B_i:= \big\| \| \{ (f_{j,k}(\omega))_{j \in \calJ} \}_{k=1}^K \|_{\RM_v(\varpi_0, \calJ)} \big\|_{L^{p(\calJ)}(\Omega)}^{p(\calJ)/p_i},
$$
define
$$
f_{i,k}(\omega):= B_i^{-1}\varphi_{i,k}(\omega)  \| \{ (f_{j,k}(\omega))_{j \in \calJ} \}_{k=1}^K \|_{\RM_v(\varpi_0, \calJ)}^{p(\calJ)/p_i}
$$
and write $f_v=f_{v,k}$.
For $i \in \calI \setminus \{v\}$ there holds that
$$
\| (f_{i,k})_{k=1}^K \|_{\Rad(L^{p_i}(\Omega;X))} \sim \big \| \| (f_{i,k}(\omega))_{k=1}^K \|_{\Rad(X_i)} \big \|_{L^{p_i}(\Omega)} \le1
$$
and $\| f_{v}\|_{L^{p_v}(\Omega;X_v)}\le1$.   Define the exponent $p(\calI)$ by
$
1/p(\calI)=\sum_{i \in \calI} 1/p_i.
$
We also have that
\begin{equation*}
\begin{split}
\Big | \sum_{k=1}^K \varpi (f_{1,k}, \dots, f_{n+1,k}) \Big |
&= \int_{\Omega} \sum_{k=1}^K \varpi_0(f_{1,k}(\omega), \dots, f_{n+1,k}(\omega)) \\
&\gtrsim \prod_{i \in \calI}B_i^{-1} \int_{\Omega} \| \{ (f_{j,k}(\omega))_{j \in \calJ} \}_{k=1}^K \|_{\RM_v(\varpi_0, \calJ)}^{1+p(\calJ)/p(\calI)} \\
&=\big\| \| \{ (f_{j,k}(\omega))_{j \in \calJ} \}_{k=1}^K \|_{\RM_v(\varpi_0, \calJ)} \big\|_{L^{p(\calJ)}(\Omega)}.
\end{split}
\end{equation*}
This proves \eqref{eq:Fixedu} concluding our demonstration.

In the special case that $X_1= \cdots =X_{n+1}=\C$ and $\varpi_0(e_1, \dots, e_{n+1})=\prod_{m=1}^{n+1}e_m$
it is not hard to see that
$$
\| \{(e_{j,k})_{j \in \calJ}\}_{k=1}^\infty\|_{\RM_v(\varpi_0, \calJ)}
= \sup_{k} \prod_{j \in \calJ} |e_{j,k}|.
$$
Therefore, the above gives in this case that
\begin{equation}
\label{e:latticesup}
\|\{(f_{j,k})_{j \in \calJ}\}_{k=1}^\infty \|_{\RM_v(\varpi, \calJ)}
\sim \Big\| \sup_k \prod_{j \in \calJ} |f_{j,k}(\omega)|  \Big\|_{L^{p(\calJ)}(\Omega)}.
\end{equation}
\end{exmp}

Next, we define a related multilinear $\calR_{\varpi}$-boundedness condition for families of operators. Due to the invariance under permutation of the spaces $X_j$, $j\in \mathcal J_n$ of the previous and upcoming definitions, we do not lose in generality by working with $n$-linear operators on $\prod_{j=1}^n X_j$ with range in $Y_{n+1}$. Also, it will be convenient to define the notion of \emph{tight admissible partition}  $\mathcal P  $: an   an admissible partition $\mathcal P$ of $\mathcal J_n$ is tight if   $\#\mathcal P_{\Rad}=2$ .
\begin{defn}\label{def:Rvarpi}
Let $n \ge 2$ and suppose that $\calT \subset \calL(\prod_{m=1}^n X_m,Y_{n+1})$ is a family of operators.
Assume that $\varpi$ is an $(n+1)$-linear mapping satisfying \eqref{eq:Contr} and $\mathcal P$ is a tight admissible partition. We say that $\calT$ is $\calR_{\varpi,\mathcal P}$-bounded if there exists a finite constant $C$ so that
\begin{equation*}
 \begin{split}
 \Big | & \sum_{k=1}^K \langle T_k [e_{1,k} \dots, e_{n,k}],e_{n+1,k} \rangle \Big|  \\
& \le C  \| \{(e_{j,k})_{j \in  \mathcal P_{\RM}}\}_{k=1}^K \|_{\RM_{j_{\mathcal P}}(\varpi,  \mathcal P_{\RM})}
\prod_{j \in \mathcal P_{\Rad}}\|(e_{j,k})_{k=1}^K \|_{\Rad(X_j)}
|e_{j_{\mathcal P}}|_{X_{j_{\mathcal P}}}
\end{split}
\end{equation*}
holds for
all
$K \in \N$, all choices of $T_k \in \calT$,  $\{e_{j,k}\colon k  = 1, \ldots, K\} \subset X_j$, $j \in \mathcal J_n$,
such that  $e_{j_{\mathcal P},k}=e_{j_{\mathcal P},k'}:=e_{j_{\mathcal P}}$ for all $k$ and $k'$.
The smallest possible constant $C$ is denoted by $\calR_{\varpi,\mathcal P} (\calT)$. If  $\calT$ is $\calR_{\varpi,\mathcal P}$-bounded for all tight admissible partitions $\mathcal P$, we say that  $\calT$ is   $\calR_\varpi$-bounded and write $\calR_{\varpi} (\calT)$ for the supremum over all tight admissible  partitions $\mathcal P$ of $\calR_{\varpi,\mathcal P} (\calT)$. Otherwise,  we set
$\calR_{\varpi} (\calT)=\infty$.
\end{defn}

Notice that if $n=2$, then the $\calR_\varpi$-boundedness condition does not depend on $\varpi$ at all, and it reduces to
a more simple estimate as in Equation \eqref{e:rboundintro}.
Therefore, in the case $n=2$ we will just talk about $\calR$-boundedness.
\begin{rem}\label{rem:OldRBdd}
Suppose $\calT \subset \calL(\prod_{m=1}^n X_m,Y_{n+1})$ is an $\calR_{\varpi}$-bounded family.
Suppose $\mathcal P$ is a non-tight admissible partition.
Let $T_k \in \calT$ and $e_{j,k} \in X_j$ for $k=1, \dots, K$, and as usual assume that $e_{j_{\mathcal P}}:=e_{j_{\mathcal P},k}=e_{j_{\mathcal P},k'}$.
Write $\mathcal P_{\Rad}=\calJ_0 \cup \calJ_1$, where $\# \calJ_1 =2$.
Then
\begin{equation}\label{eq:OldRBdd}
\begin{split}
&\quad \Big |   \sum_{k=1}^K \langle T_k [e_{1,k}, \dots, e_{n,k}], e_{n+1,k} \rangle \Big | \\
& \le \calR_{\varpi}(\calT) \| \{ (e_{j,k})_{j \in \mathcal P_{\RM}  \cup \calJ_0}\}_{k =1}^K \|_{\RM_{j_\calP}(\varpi, \mathcal P_{\RM} \cup \calJ_0)}
\prod_{j \in \calJ_1} \| (e_{j,k})_{k=1}^K \|_{\Rad(X_j)} | e_{j_{\mathcal P}}|_{X_{j_{\mathcal P}}} \\
& \le \calR_{\varpi}(\calT) \| \{ (e_{j,k})_{j \in \mathcal P_{\RM} }\}_{k =1}^K \|_{\RM_{j_\calP}(\varpi, \mathcal P_{\RM}  )}
\prod_{j \in \calP_{\Rad}} \| (e_{j,k})_{k=1}^K \|_{\Rad(X_j)} | e_{j_{\mathcal P}}|_{X_{j_{\mathcal P}}},
\end{split}
\end{equation}
where in the last step we applied Equation \eqref{eq:SupLel2}. We will apply this form of the $\calR_\varpi$-boundedness (where not necessarily $\mathcal P$ is a tight partition)
later.
\end{rem}

In the representation theorem we will need the fact that $\calR_{\varpi}$-boundedness is preserved under averages in the sense of the following lemma.

\begin{lem}\label{lem:AveRbdd}
Let $\calT \subset \calL(\prod_{m=1}^n X_m, Y_{n+1})$ be an $\calR_{\varpi}$-bounded family of operators.
Let $\calA(\calT) \subset \calL(\prod_{m=1}^n X_m, Y_{n+1})$ be the collection of operators of the form
$$
\int_{\R^{d(n+1)}} L(y) \lambda(y) \ud y,
$$
where $L \colon \R^{d(n+1)} \to \calL(\prod_{m=1}^n X_m, Y_{n+1})$ is such that $L(y) \in \calT$ for a.e. $y$ and
$\lambda \colon \R^{d(n+1)} \to \C$ satisfies $\int |\lambda | \le 1$. Then we have
$$
\calR_{\varpi} (\calA(\calT)) \lesssim \calR_{\varpi} (\calT).
$$
\end{lem}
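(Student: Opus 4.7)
The plan is to exploit a probabilistic representation of each integral operator in $\calA(\calT)$, reducing the claimed inequality to a pointwise application of the $\calR_\varpi$-boundedness of $\calT$, coupled with Kahane's contraction principle \eqref{eq:KCont} to handle the resulting random scalar.

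I would fix a tight admissible partition $\calP = (\{j_\calP\}, \calP_{\Rad}, \calP_{\RM})$ of $\calJ_n$, operators $S_k = \int L_k(y)\lambda_k(y)\ud y \in \calA(\calT)$, $k=1,\ldots,K$, and test vectors $(e_{j,k})$ as in Definition \ref{def:Rvarpi}. After discarding trivial indices with $\lambda_k\equiv 0$, set $c_k := \int|\lambda_k| \in (0,1]$, take $p_k := |\lambda_k|/c_k$ as a probability density on $\R^{d(n+1)}$, and introduce mutually independent random variables $Y_1,\ldots,Y_K$ with $Y_k$ distributed according to $p_k(y)\ud y$. A direct computation gives $S_k = c_k\,\E\bigl[\sign(\lambda_k(Y_k))\,L_k(Y_k)\bigr]$, so that
\[
\sum_{k=1}^K \pair{S_k[e_{1,k},\ldots,e_{n,k}]}{e_{n+1,k}} = \E \sum_{k=1}^K c_k \sign(\lambda_k(Y_k)) \pair{L_k(Y_k)[e_{1,k},\ldots,e_{n,k}]}{e_{n+1,k}}.
\]

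Jensen's inequality pulls the absolute value inside $\E$, so it suffices to bound, for each realization, the unweighted inner sum. I would pick any slot $p\in\calP_{\Rad}$---which exists because $\#\calP_{\Rad}=2$---and absorb the scalar $c_k\sign(\lambda_k(Y_k))$ into the $p$-th vector via multilinearity. Since $\{L_k(Y_k):1\le k\le K\}\subset\calT$ pointwise in the realization of $Y$, the $\calR_{\varpi,\calP}$-boundedness of $\calT$ applied to this family bounds the inner sum by $\calR_\varpi(\calT)$ times $\|\{(e_{j,k})_{j\in\calP_{\RM}}\}_{k=1}^K\|_{\RM_{j_\calP}(\varpi,\calP_{\RM})}$, the Rademacher norms $\|(e_{j,k})_{k=1}^K\|_{\Rad(X_j)}$ for $j\in\calP_{\Rad}\setminus\{p\}$, the factor $|e_{j_\calP}|_{X_{j_\calP}}$, and the $p$-slot factor $\|(c_k\sign(\lambda_k(Y_k))\,e_{p,k})_{k=1}^K\|_{\Rad(X_p)}$. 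Since $|c_k\sign(\lambda_k(Y_k))|\le 1$, Kahane's contraction principle dominates this last factor by an absolute constant times $\|(e_{p,k})_{k=1}^K\|_{\Rad(X_p)}$, uniformly in $Y$. Taking expectation yields the bound for this fixed $\calP$, and taking supremum over tight admissible partitions completes the proof.

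The main subtle point is where to place the random scalar $c_k\sign(\lambda_k(Y_k))$. Absorption into a $\Rad$ slot is essential, since Kahane's contraction principle is the only tool available for detaching a bounded random scalar from the given test vectors; neither the $\RM$-norm nor the fixed vector $e_{j_\calP}$ admits an analogous contraction. This is also why the more naive route of pulling the absolute value directly under the $y$-integral and applying $\calR_{\varpi}$-boundedness pointwise fails: the Kahane step would then leave a factor of $\int \max_k |\lambda_k(y)|\ud y$, controlled only by $K$. The probabilistic representation replaces this maximum by an expectation over independent samples, one per index $k$, and so restores the $K$-independent bound required for the lemma.
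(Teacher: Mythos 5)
Your proposal is correct and is essentially the paper's own argument: the independent samples $Y_k$ with densities $|\lambda_k|/c_k$ are exactly the paper's product probability space $\prod_k(\R^{d(n+1)},|\lambda_k|\ud y)$ (after the paper's normalization $\int|\lambda_k|=1$), and both proofs conclude by applying $\calR_{\varpi,\calP}$-boundedness pointwise in the realization and integrating over the probability space. Your explicit absorption of the bounded scalar $c_k\,\sign(\lambda_k(Y_k))$ into a $\Rad$ slot via Kahane's contraction principle only spells out a step the paper leaves implicit, and accounts for the implied constant in $\calR_{\varpi}(\calA(\calT))\lesssim\calR_{\varpi}(\calT)$.
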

\begin{rem}
The space $\R^{d(n+1)}$ plays no role here (it could be some measure space), but this is what appears later. Moreover, we have by definition that
$$
\Big(\int_{\R^{d(n+1)}} L(y) \lambda(y) \ud y\Big)[e_1, \ldots, e_n] := \int_{\R^{d(n+1)}} L(y)[e_1, \ldots, e_n] \lambda(y) \ud y,
$$
and the assumption on $L$ is that the mappings
$$
y \mapsto L(y)[e_1, \ldots, e_n]
$$
are strongly measurable.
\end{rem}

\begin{proof}[Proof of Lemma \ref{lem:AveRbdd}]
This result could be reduced to the corresponding linear result. For the linear theorem, see for example Theorem 8.5.2 in \cite{HNVW2}.
We give another self contained proof.

Suppose $T_k= \int L_k \lambda_k  \in \calA(\calT)$ for $k=1, \dots, K$. We may assume that
$L_k(y) \in \calT$ for every $y \in \R^{d(n+1)}$ and that $ \int |\lambda_k| =1$ for every $k$.
For each $k$ define the probability space $\calY_k:=(\R^{d(n+1)}, \mu_k)$, where $\mu_k:= |\lambda_k| \ud y$.
Let $(\calY,\mu)$ be the product probability space $\Big(\prod_{k=1}^K\calY_k, \prod_{k=1}^K\mu_k\Big)$.

Fix a tight admissible partition $\mathcal P$, and let $\{e_{j,k}\}_{k=1}^K \subset X_j$ be such that
$e_{j_{\mathcal P }}:= e_{j_{\mathcal P },k}=e_{j_{\mathcal P },k'}$.
Now, we have that
\begin{equation*}
\begin{split}
\sum_{k=1}^K  \langle T_k[e_{1,k}, \dots, e_{n,k}],e_{n+1,k} \rangle
&=\sum_{k=1}^K \int_{\calY_k}
\Big\langle \frac{\lambda_k(y)}{|\lambda_k(y)|} L_k(y) [e_{1,k}, \dots, e_{n,k}],e_{n+1,k} \Big\rangle \ud \mu_k(y) \\
&= \int_{\calY}  \sum_{k=1}^K \Big\langle \frac{\lambda_k(y_k)}{|\lambda_k(y_k)|} L_k(y_k) [e_{1,k}, \dots, e_{n,k}],e_{n+1,k} \Big\rangle \ud \mu(y),
\end{split}
\end{equation*}
where in the last line we denoted by $y_k$ the coordinate of $y \in \calY$ related to $\calY_k$.
For each $y \in \calY$ the absolute value of the integrand in the last line is dominated by
\begin{equation*}
\begin{split}
\calR_{\varpi,\mathcal P}(\calT)\| \{(e_{j,k})_{j \in  \mathcal P_{\RM}}\}_{k=1}^K \|_{\RM_{j_{\mathcal P}}(\varpi,  \mathcal P_{\RM})}
\prod_{j \in \mathcal P_{\Rad}}\|(e_{j,k})_{k=1}^K \|_{\Rad(X_j)}
|e_{j_{\mathcal P}}|_{X_{j_{\mathcal P}}}.
\end{split}
\end{equation*}
Since $(\calY,\mu)$ is a probability space this proves the claim.

\end{proof}

\subsection{The $\RMF_\varpi$ property}\label{sec:RMF}
Related to the $\calR_\varpi$ condition we will need a certain $\RMF_\varpi$ condition of the tuple of spaces $(X_1, \dots, X_{n+1})$. This condition is only defined when $n >2$.

Suppose $\varpi$ is an $(n+1)$-linear contraction as in \eqref{eq:Contr}.
Let $\calJ  \subset \mathcal J_n$   be such that $1\leq \# \calJ \le n-2$.
Suppose $f_j \in  L^1_{\loc}(X_j)$ for all $j \in \calJ$. Denote by $(f_j)_{j \in \calJ}$ the tuple of functions $(f_{j_1}, \dots, f_{j_\ell})$.
Let $\calD$ be a dyadic lattice in $\R^d$. For $v\in \calJ_n\setminus \calJ$,
we define the multilinear Rademacher maximal function $\RM_{\calD, \varpi,\calJ,v}[(f_j)_{j \in \calJ}]$ by
$$
\RM_{\calD, \varpi,\calJ,v}[(f_j)_{j \in \calJ}](x) = \Big\| \Big\{ (\langle f_j \rangle_Q)_{j \in \calJ} \colon x \in Q \in \calD\Big\} \Big\|_{\RM_v(\varpi, \calJ)}.
$$
Let $p_j \in (1, \infty)$ for $j \in \calJ_n$ and  define for all $\calJ \subset \calJ_n$ with $1 \le \# \calJ \le n-2$ the exponent
$p(\calJ)$ by $1/p(\calJ)= \sum_{j \in \calJ} 1/p_j$.
We say that $(X_1, \dots, X_{n+1})$ has the $\RMF_\varpi$ property relatively to a given dyadic lattice $\mathcal D$ in $\R^d$
and the tuple of exponents $(p_j)$ if
\begin{equation}\label{eq:RMAssump}
\max_{\substack{j_1,j_2 \in \calJ_n \\ j_1 \not= j_2}}
\min_{v\in \calJ_n\setminus \{j_1, j_2\}}
\max_{ \calJ\subset \calJ_n\setminus \{j_1, j_2, v\}}
\Big\| \RM_{\calD, \varpi,\calJ,v}: \prod_{ j \in \calJ} L^{p_j}(X_j) \to L^{{p(\mathcal J)}}(\R^d) \Big\| <\infty.
\end{equation}
The number defined in  \eqref{eq:RMAssump} will be referred to as the $\RMF_\varpi(\calD,(p_j))$ constant of the tuple $(X_1, \dots, X_{n+1})$.

Independence of the $\RMF_\varpi$ property   \eqref{eq:RMAssump}  on the dimension $d$ and
the lattice $\mathcal D$ can be proved with the same procedure used for the linear case by  Kemppainen  \cite{Kemp2}.
In particular, this means that if we have two lattices $\calD$ and $\calD'$ in $\R^d$, then
$\RMF_\varpi(\calD,(p_j))=\RMF_\varpi(\calD',(p_j))$.
On the other hand,
Lemma \ref{l:RMFeq} implies that  the $\RMF_\varpi$ condition is independent of the tuple of exponents in the sense that if
$(q_j)$ is another set of exponents then $\RMF_\varpi(\calD,(p_j))\sim\RMF_\varpi(\calD,(q_j))$.
Lemma \ref{l:RMFeq} also shows that it is not important to have a fixed tuple $(p_j)$ in \eqref{eq:RMAssump}; for each $\calJ$ appearing in
\eqref{eq:RMAssump} we could have related exponents $p^\calJ_j \in (1, \infty]$, $j \in \calJ$, such that the corresponding target exponent is finite.
Henceforth, we are authorized to not mention the dimension $d$, the choice of the lattice $\mathcal D$ and of the exponents, and  refer to a tuple   $(X_1, \dots, X_{n+1})$ enjoying \eqref{eq:RMAssump} as a tuple of spaces with the $\RMF_\varpi$ property.

\begin{rem}\label{rem:RMFClassic}
The original definition of an $\RMF$ property of a Banach space $X$ is in Hyt\"onen-McIntosh-Portal \cite{HMP}.
If $A \subset X$, then define
$$
\| A \|_{\RM}:= \sup \E \| \sum_{k=1}^K \varepsilon_k  \lambda_k a_k \|_X,
$$
where the supremum is over $K \in \N$, $a_k \in A$ and over scalars $\lambda_k$ such that $\sum_{k=1}^K |\lambda_k |^2 \le 1$.
In \cite{HMP} the Rademacher maximal function $M_R$ was defined by
$$
M_{R,\calD_0}f(x) :=M_R f(x) := \|\{\langle f \rangle_Q  \colon x \in Q \in \calD_0\}\|_{\RM},
$$
where $\calD_0$ is the standard lattice in $\R^d$ and $f \in L^1_{\loc} (X)$, and it was defined that $X$ has the $\RMF$ property if $M_R \colon L^2(X) \to L^2$ is bounded.
The Rademacher maximal function has further been studied for example by Kemppainen \cite{Kemp1,Kemp2}.
The boundedness of $M_R \colon L^p(X) \to L^p$ is independent of the dimension $d$ and the lattice $\mathcal D $ used in the definition, as well as of the exponent $p \in (1, \infty)$. A  definition akin to the one given in this article was previously given in \cite{DO}.
\end{rem}

The proof of the next lemma is a twist on a sparse domination argument presented in  \cite{CDPOBP} by  Culiuc, Di Plinio and Ou.

\begin{lem} \label{l:RMFeq}
Let $\calD$ be a dyadic lattice in $\R^d$.
Let $n \ge 3$ and let $\calJ\subset \mathcal J_n$ with
$1\leq \# \calJ\leq n-2$. Suppose $v \in \calJ_n \setminus \calJ$.
Assume that for some $q_j\in (1,\infty]$, $j\in \calJ$,  such that
$
q:=\big(\sum_{j\in \calJ} 1/q_j\big)^{-1}<\infty
$
the estimate
\[
 \| \RM_{\calD, \varpi,\calJ,v}[(f_j)_{j \in \calJ}] \|_{L^{q}}
\lesssim \prod_{j \in \calJ} \| f_j\|_{L^{q_j}(X_j)}
\]
holds.
Then, for all $p_j \in (1, \infty]$, $j \in \calJ$, such that $p:=\big(\sum_{j\in \calJ} 1/{p_j}\big)^{-1}<\infty$ we have the estimate
\[
 \| \RM_{\calD, \varpi,\calJ,v}[(f_j)_{j \in \calJ}] \|_{L^{p}}
\lesssim \prod_{j \in \calJ} \| f_j\|_{L^{p_j}(X_j)}.
\]
\end{lem}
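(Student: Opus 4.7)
The equivalence $\mathrm{RMF}_\varpi(\calD,(p_j))\sim \mathrm{RMF}_\varpi(\calD,(q_j))$ mentioned just before the lemma follows from the stated implication applied twice with the roles of $(p_j)$ and $(q_j)$ interchanged, so I focus on the one-way implication. Following the sparse-domination philosophy of \cite{CDPOBP}, the plan is to prove the stronger statement that the maximal operator admits a sparse bound
\[
\int_{\R^d} g(x)\, \RM_{\calD,\varpi,\calJ,v}[(f_j)_{j\in\calJ}](x)\, dx \;\lesssim\; \sum_{S\in\calS}|S|\,\langle g\rangle_S \prod_{j\in\calJ}\langle |f_j|_{X_j}\rangle_S
\]
for all nonnegative $g\in L^\infty_c(\R^d)$ and all $f_j\in L^\infty_c(X_j)$, with $\calS\subset\calD$ an $\eta$-sparse family depending on the data. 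Once that sparse bound is available, the conclusion follows at once by H\"older's inequality and the standard sparse-to-$L^p$ principle displayed around \eqref{eq:consequenceofsparse}, giving $\prod L^{p_j}(X_j)\to L^p$ for every tuple $(p_j)$ with $p_j\in(1,\infty]$ and $p<\infty$.

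\textbf{Stopping construction and localization.} I build $\calS$ by the classical Calder\'on--Zygmund principal-cubes recursion adapted to the data. Fix an ambient $Q^0\in\calD$ containing $\supp g\cup\bigcup_{j\in\calJ}\supp f_j$, initialize $\calS=\{Q^0\}$, and at each step take as stopping children of $Q\in\calS$ the maximal dyadic subcubes $Q'\subsetneq Q$ on which one of the higher-moment averages $(\langle |f_j|_{X_j}^{q_j}\rangle_{Q'})^{1/q_j}$ for $j\in\calJ$, or $(\langle g^{q'}\rangle_{Q'})^{1/q'}$, exceeds a large fixed multiple of the corresponding average over $Q$. With the constant chosen large enough, $|E_Q|\gtrsim|Q|$ where $E_Q:=Q\setminus\bigcup\{\text{stopping children}\}$, yielding sparseness; moreover, on $E_Q$ the same moment-averages over any dyadic $R\subseteq Q$ intersecting $E_Q$ are uniformly bounded by their counterparts on $Q$, providing the reverse-H\"older passage $(\langle|f_j|_{X_j}^{q_j}\rangle_R)^{1/q_j}\lesssim \langle|f_j|_{X_j}\rangle_Q$ needed below. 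Subadditivity of $\|\cdot\|_{\RM_v(\varpi,\calJ)}$ under unions — immediate from Kahane's contraction principle applied to the projected Rademacher constraints in the defining supremum — then splits, on $E_Q$,
\[
\RM_{\calD,\varpi,\calJ,v}[(f_j)_{j\in\calJ}](x) \le \RM^{\loc}_Q[(f_j 1_Q)_{j\in\calJ}](x) + \|A^\uparrow_Q\|_{\RM_v(\varpi,\calJ)},
\]
where $\RM^{\loc}_Q$ restricts the supremum to dyadic $R\subseteq Q$ and $A^\uparrow_Q=\{(\langle f_j\rangle_R)_{j\in\calJ}:R\supsetneq Q\}$ is independent of $x\in Q$. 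Testing the localized part against $g$ on $E_Q$ and applying H\"older with exponents $(q_j,q')$ — permitted since $q<\infty$ — together with the hypothesized $L^{q_j}$-bound yields $\int_{E_Q} g\cdot \RM^{\loc}_Q[(f_j 1_Q)_{j\in\calJ}] \lesssim |Q|\langle g\rangle_Q \prod_{j\in\calJ}\langle|f_j|_{X_j}\rangle_Q$.

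\textbf{Main obstacle: the ancestral contribution.} The principal technical difficulty is handling $\|A^\uparrow_Q\|_{\RM_v(\varpi,\calJ)}$, which a priori is not amenable to a sparse bound attached to $Q$. I resolve this by decomposing $A^\uparrow_Q$ along the chain of principal ancestors $Q=Q_{(0)}\subsetneq Q_{(1)}\subsetneq\cdots$ of $Q$ in $\calS$: subadditivity splits $\|A^\uparrow_Q\|_{\RM_v(\varpi,\calJ)}\le \sum_m \|A_m\|_{\RM_v(\varpi,\calJ)}$, where $A_m$ corresponds to the dyadic $R$ with $Q_{(m)}\subsetneq R\subseteq Q_{(m+1)}$; on such $R$ the averages are uniformly $\lesssim \langle|f_j|_{X_j}\rangle_{Q_{(m+1)}}$ by the defining stopping rule. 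The contribution of each $A_m$ to $\int_{E_Q} g\cdot\RM$ is bounded by $|E_Q|\langle g\rangle_Q\cdot\|A_m\|_{\RM_v(\varpi,\calJ)}$; redistributing this contribution to the sparse cube $Q_{(m+1)}$ rather than $Q$, and performing the geometric summation along the principal chain, produces a sparse sum of the required form. This redistribution — the non-local-tail bookkeeping inspired by \cite{CDPOBP} — is the core technical content of the proof.
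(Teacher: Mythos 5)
Your overall strategy (sparse domination, then the standard sparse-to-$L^p$ principle) is in the spirit of the paper's proof, but two of your key steps do not hold as stated. The first gap is the ``reverse H\"older passage'': a stopping construction whose criterion compares $(\langle |f_j|_{X_j}^{q_j}\rangle_{Q'})^{1/q_j}$ with the corresponding $q_j$-moment average over the parent only yields, on cubes meeting $E_Q$, the bound $(\langle|f_j|_{X_j}^{q_j}\rangle_R)^{1/q_j}\lesssim (\langle|f_j|_{X_j}^{q_j}\rangle_Q)^{1/q_j}$; it does not give $\lesssim \langle|f_j|_{X_j}\rangle_Q$. Consequently your local estimate (H\"older with $(q_j,q')$ plus the hypothesis) produces a sparse form with $q_j$- and $q'$-averages, $\sum_S|S|(\langle g^{q'}\rangle_S)^{1/q'}\prod_j(\langle|f_j|_{X_j}^{q_j}\rangle_S)^{1/q_j}$, and such a form only yields boundedness for $p_j>q_j$ (and the dual constraint on $p$), not the full range $p_j\in(1,\infty]$ claimed in the lemma; iterating does not enlarge the range downward. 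The missing idea is an endpoint improvement: the paper first upgrades the $(q_j)$-assumption to the weak-type bound $\RM_{\calD,\varpi,\calJ,v}\colon\prod_{j\in\calJ}L^1(X_j)\to L^{1/\ell,\infty}$, $\ell=\#\calJ$, via a Calder\'on--Zygmund decomposition in which the bad parts $b_j$ are annihilated by the maximal operator (their averages over the relevant cubes vanish), and only then runs a sparse iteration with plain $L^1$ averages, with the weak-type constant $\calB$ appearing in the sparse bound.

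The second gap is your treatment of the ancestral set $A^\uparrow_Q$. You decompose it into shells $A_m$ and assert that the contribution is controlled because the averages $\langle f_j\rangle_R$ for $Q_{(m)}\subsetneq R\subseteq Q_{(m+1)}$ have norms $\lesssim\langle|f_j|_{X_j}\rangle_{Q_{(m+1)}}$, but entrywise norm bounds do not control $\|A_m\|_{\RM_v(\varpi,\calJ)}$: in general Banach spaces the $\RM_v$-norm of a set is not dominated by the supremum over its elements of the products of norms (that failure is precisely why the $\RMF_\varpi$ property is a nontrivial assumption; the identity \eqref{e:latticesup} is special to the scalar/lattice case). Moreover the redistribution to $Q_{(m+1)}$ and the ``geometric summation'' are not justified, since $\langle g\rangle_Q$ can grow along the principal chain relative to $\langle g\rangle_{Q_{(m+1)}}$ and no decay in $m$ is exhibited. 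The paper avoids both problems by making the upward quantity itself the stopping criterion: $\calE(Q_0)$ consists of the maximal cubes $Q$ with $\RM_{\scrC}^{Q}[(f_j)]>2^{\ell}\calB\prod_{j\in\calJ}\langle|f_j|_{X_j}\rangle_{Q_0}$, so that on the complement and at the parents of stopping cubes the upward part is bounded by the threshold by maximality, while the weak-type estimate applied to $(f_j1_{Q_0})$ gives $\sum_{Q\in\calE(Q_0)}|Q|\le|Q_0|/2$; the resulting pointwise sparse domination of the (finitely truncated) maximal function, followed by monotone convergence, finishes the proof. To repair your argument you would need both the weak-type endpoint step and a stopping condition formulated in terms of $\RM_{\scrC}^{Q}$ rather than moment averages.
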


\begin{proof}
We abbreviate $\RM:=\RM_{\calD, \varpi,\calJ,v}$.
First, we prove the weak type boundedness
$ \RM \colon \prod_{j \in \calJ} L^1(X_j) \to L^{\frac{1}{\ell}, \infty}$, where $\ell:=\#\calJ$. Then, we show that it implies a suitable pointwise sparse domination for certain finite
maximal functions, from which the claim follows.

We turn to the weak type estimate. Let $f_j \in L^1(X_j)$, $j \in \calJ$, and fix some $\lambda>0$.
It is enough to assume that $\| f_j \|_{L^1(X_j)}=1$ and show that
$$
| \{ \RM [(f_j)_{j \in \calJ}] > \lambda \} |  \lesssim \lambda^{-\frac1\ell}.
$$

We perform the usual Calder\'on-Zygmund decomposition.
Let $\calD_{j,\lambda}$ denote the collection of the maximal cubes $Q \in \calD$ such that
$
\langle | f_j |_{X_j} \rangle_Q > \lambda^{\frac{1}{\ell}}.
$
As usual, we write
$
f_j=g_j+b_j,
$
where
$$
g_j=1_{\bigcup \calD_{j,\lambda}}f_j+\sum_{Q \in \calD_{j,\lambda}} \langle f_j \rangle_Q1_Q
\qquad
\text{and}
\qquad
b_j=\sum_{Q \in \calD_{j,\lambda}}(f_j-\langle f_j \rangle_Q)1_Q.
$$

Write $\calJ=\{j_1, \dots, j_\ell\}$. We have
\begin{equation}\label{eq:GoodAndBad}
\RM [(f_j)_{j \in \calJ}]
\le \sum_{i=1}^\ell \RM [(g_{j_1}, \dots, g_{j_{i-1}}, b_{j_i}, f_{j_{i+1}}, \dots, f_{j_\ell})]
+\RM [(g_{j_1}, \dots, g_{j_\ell})].
\end{equation}
The assumed boundedness gives that
$$
\Big |\Big\{ \RM [(g_{j_1}, \dots, g_{j_\ell})] > \frac{\lambda}{\ell+1}\Big\}\Big|
\lesssim \lambda^{-q} \prod_{j=1}^\ell \| g_j \|_{L^{q_j}(X_j)}^q
\lesssim \lambda^{-q} \prod_{j=1}^\ell \lambda^{\frac{q(q_j-1)}{lq_j}} \|f_j \|_{L^{1}(X_j)}^{q/q_j}
\le \lambda^{-\frac{1}{\ell}},
$$
where we used the facts that $\| g_j \|_{L^\infty(X_j)} \lesssim \lambda^{\frac{1}{\ell}}$ and
$\|g_j \|_{L^{1}(X_j)} \le \|f_j \|_{L^{1}(X_j)}=1$.

Fix some $i \in \{1, \dots, l\}$ and consider the corresponding function from the sum in the right hand side of
\eqref{eq:GoodAndBad}.
Since
$| \bigcup \calD_{j_i,\lambda}| \le \lambda^{-\frac{1}{\ell}}$,
it is enough to consider
\begin{equation}\label{eq:BadInvolved}
\Big\{ x \not \in \bigcup \calD_{j_i, \lambda} \colon
\RM [(g_{j_1}, \dots, g_{j_{i-1}}, b_{j_i}, f_{j_{i+1}}, \dots, f_{j_\ell})](x) > \frac{\lambda}{\ell+1}\Big\}.
\end{equation}
Suppose $x \not \in \bigcup \calD_{j_i, \lambda}$. If $Q \in \calD$ and $Q' \in \calD_{j_i, \lambda}$
are such that $x \in Q$ and $Q \cap Q' \not= \varnothing$, then $ Q' \subset Q$. Using this and the zero integral of $b_{j_i}$
in the cubes $Q' \in \calD_{j_i, \lambda}$ we see that $\RM [(g_{j_1}, \dots, g_{j_{i-1}}, b_{j_i}, f_{j_{i+1}}, \dots, f_{j_\ell})](x)=0$.
Thus, the set in \eqref{eq:BadInvolved} is actually empty. This finishes the proof of the weak type estimate.

We move on to prove the sparse domination.
Let $\mathscr{C} \subset \calD$ be any finite collection such that there exists a cube $Q_0 \in \calD$ so
that $Q \subset Q_0$ for every $Q \in \scrC$.
We consider the related maximal function
$$
\RM_{\scrC}[(f_j)_{j \in \calJ}](x)
= \Big\| \big\{ (\langle f_j \rangle_Q)_{j \in \calJ} \colon x \in Q \in \scrC\big\} \Big\|_{\RM_v(\varpi, \calJ)}.
$$
Define the related truncated versions for every $Q' \in \calD$ by
$$
\RM_{\scrC, Q'}[(f_j)_{j \in \calJ}](x)
= \Big\| \big\{ (\langle f_j \rangle_{Q})_{j \in \calJ} \colon x \in Q \in \scrC, Q \subset Q'\big\} \Big\|_{\RM_v(\varpi, \calJ)},
$$
and also the numbers
$$
\RM_{\scrC}^{Q'}[(f_j)_{j \in \calJ}]
= \Big\| \big\{ (\langle f_j \rangle_Q)_{j \in \calJ} \colon Q \in \scrC, Q \supset Q'\big\} \Big\|_{\RM_v(\varpi, \calJ)}.
$$
Notice that $\RM_{\scrC, Q_0}=\RM_{\scrC}$.

Fix some functions $f_j \in L^1_{\loc}(X_j)$. Let $\calB$ denote the weak type norm of $\RM$.
We show that there exists a sparse collection $\calS=\calS( (f_j)_{j \in \calJ}) \subset \calD$ of subcubes
of $Q_0$ so that
\begin{equation}\label{eq:MaximalSparse}
\RM_{\scrC}[(f_j)_{j \in \calJ}]
\le 2^l\calB \sum_{Q \in \calS} \prod_{j \in \calJ}\langle | f_j |_{X_j} \rangle_{Q}1_{Q}.
\end{equation}
This follows via iteration from the estimate
\begin{equation}\label{eq:1IterSparse}
\RM_{\scrC}[(f_j)_{j \in \calJ}]
\le 2^l \calB \prod_{j \in \calJ} \langle | f_j |_{X_j} \rangle_{Q_0}1_{Q_0}
+ \sum_{Q \in \calE(Q_0)} \RM_{\scrC, Q} [(f_j)_{j \in \calJ}],
\end{equation}
where $\calE(Q_0)$ is a collection of pairwise disjoint cubes $Q \subset Q_0$ such that $\sum_{Q \in \calE(Q_0)} |Q| \le |Q_0|/2$.

We prove \eqref{eq:1IterSparse}.
Define the collection $\calE(Q_0)$ to be the set of the maximal cubes
$Q \in \calD$, $Q \subset Q_0$, such that
\begin{equation*}
\RM_{\scrC}^{Q}[(f_j)_{j \in \calJ}] > 2^l\calB \prod_{j \in \calJ} \langle | f_j |_{X_j} \rangle_{Q_0}.
\end{equation*}
If $Q \in \calE(Q_0)$, then
$
\RM_\scrC [(f_j1_{Q_0})_{j \in \calJ}](x)=\RM_\scrC [(f_j)_{j \in \calJ}](x) >2^l\calB \prod_{j \in \calJ} \langle | f_j |_{X_j} \rangle_{Q_0}
$
for all $x \in Q$. Therefore, applying the weak type boundedness,
we have that
$$
\sum_{Q \in \calE(Q_0)} |Q|
\le 2^{-1}  \Big( \prod_{j \in \calJ} \langle | f_j |_{X_j} \rangle_{Q_0}\Big)^{-\frac{1}{\ell}}
\prod_{j \in \calJ} \| f_j 1_{Q_0}\|_{L^1(X_j)}^{\frac{1}{\ell}}
=  2^{-1} |Q|.
$$

If $x \in Q_0 \setminus \bigcup \calE(Q_0)$, then
$\RM_\scrC [(f_j)_{j \in \calJ}](x) \le 2^l\calB \prod_{j \in \calJ} \langle | f_j |_{X_j} \rangle_{Q_0}$.
On the other hand, if $x \in Q \in \calE(Q_0)$, then
$$
\RM_\scrC [(f_j)_{j \in \calJ}](x)
\le \RM_{\scrC}^{Q^{(1)}}[(f_j)_{j \in \calJ}]
+\RM_{\scrC,Q}[(f_j)_{j \in \calJ}](x),
$$
where $\RM_{\scrC}^{Q^{(1)}}[(f_j)_{j \in \calJ}] \le 2^l\calB \prod_{j \in \calJ} \langle | f_j |_{X_j} \rangle_{Q_0}$
by the stopping condition. Thus, we have proved \eqref{eq:1IterSparse}, and therefore also \eqref{eq:MaximalSparse}.

From \eqref{eq:MaximalSparse} it follows that each $\RM_\scrC \colon \prod_{j \in \calJ} L^{p_j}(X_j) \to L^p$ is bounded
for all $p_j$ as in the statement. There exist cubes $Q_{i,N}  \in \calD$,
where $1 \le i \le  m$ for some $m \le 2^d$ and $N \in \N$,
so that $\ell(Q_{i,N})=2^N$, $Q_{i,N} \subset Q_{i,N+1}$, $Q_{i,N} \cap Q_{i',N}=\emptyset$ if $i \not= i'$, and
$\bigcup_i \bigcup_N Q_{i,N}=\R^d$.
What the number $m$ is depends on the lattice $\calD$.
Let $\scrC_{i,N}:=\{Q \in \calD \colon Q \subset Q_{i,N} , \ell(Q) \ge 2^{-N}\}$. Then,
$$
\sum_{i=1}^{m} \RM_{\scrC_{i,N}} [(f_j)_{j \in \calJ}](x) \nearrow \RM[(f_j)_{j \in \calJ}](x), \quad N \to \infty,
$$
for every $x$. Thus, by monotone convergence, we have that $\RM$ is also bounded.
\end{proof}

\begin{exmp}[The $\RMF_\varpi$ property in the function lattice case]\label{ex:LpRMF}
We continue with the setting and notation of Example \ref{ex:RMConst}. We also assume that
$(X_1, \dots, X_{n+1})$ has the $\RMF_{\varpi_0}$ property.
Suppose $\calJ \subset \mathcal J_n$, $1 \le \#\calJ \le n-2$ and $v\in \calJ_n\setminus \calJ$.
Suppose $f_j \in  L^{p_j}(\R^d;L^{p_j}(\Omega; X_j)) = L^{p_j}(\R^d \times \Omega; X_j)$ for all $j \in \calJ$, and fix a dyadic lattice $\calD$.
We have by Example \ref{ex:RMConst} that
\begin{align*}
\RM_{\calD, \varpi,\calJ,v}[(f_j)_{j \in \calJ}](x) &\lesssim \big\| \omega \mapsto \RM_{\calD, \varpi_0,\calJ,v}[(f_j(\cdot, \omega))_{j \in \calJ}](x)  \big\|_{L^{p(\calJ)}(\Omega)},
\end{align*}
so that
\begin{align*}
\big\|\RM_{\calD, \varpi,\calJ,v}[(f_j)_{j \in \calJ}](x) \big\|_{L^{p(\calJ)}(\R^d)}
&\lesssim \big\|  \big\| \RM_{\calD, \varpi_0,\calJ,v}[(f_j(\cdot, \omega))_{j \in \calJ}](x) \big\|_{L^{p(\calJ)}(\R^d)}\big\|_{L^{p(\calJ)}(\Omega)} \\
&\lesssim \Big\| \prod_{j \in \calJ} \|f_j(x,\omega)\|_{L^{p_j}(\R^d;X_j)}  \Big\|_{L^{p(\calJ)}(\Omega)} \\
&\le \prod_{j \in \calJ} \|f_j\|_{L^{p_j}(\R^d \times \Omega ;X_j)}.
\end{align*}
This shows that $(L^{p_1}(\Omega; X_1), \ldots, L^{p_{n+1}}(\Omega; X_{n+1}))$ has the $\RMF_{\varpi}$ property. In particular, by iterating the previous we have obtained that any   H\"older tuple of iterated  Banach function lattice spaces  $
(L^{p_1^1}_{\mu_1}\cdots L^{p_1^m}_{\mu_m}  , \ldots, L^{p_{n+1}^1}_{\mu_1}\cdots L^{p_{n+1}^m}_{\mu_1} )$ enjoys the  $\RMF_{\varpi}$ property with respect to
\[
\varpi\left( f_1,\ldots, f_{n+1}\right) = \int \prod_{j=1}^{n+1} f_j(t_1,\ldots, t_{m}) \, \mathrm{d}\mu_1(t_1) \cdots  \mathrm{d}\mu_m(t_m).
\]
\end{exmp}
\begin{exmp}[Noncommutative RMF property] \label{ex:NCLP} We are interested in operator valued, multilinear singular integrals acting on products of  noncommutative $L^p$-spaces; to this purpose, we need to study the corresponding Rademacher maximal function theory. We begin with a quick summary of the relevant definitions. For comprehensive background material on noncommutative $L^p$ spaces and their role in noncommutative probability and operator algebras we refer to the classical survey by Pisier and Xu \cite{PX}, to the recent monograph \cite{PB} by Pisier and references therein.

   Consider a von Neumann algebra $\mathcal A$ equipped with a normal, semifinite, faithful trace $\mathcal \tau$. For $1\leq p< \infty$, the corresponding noncommutative space $L^p(\mathcal A)$ is defined by the norm
\[
\|\xi\|_{L^p(\mathcal A)} = \left[\tau \left(\left(\xi^\star\xi \right)^{\frac p2}\right) \right]^{\frac1p}.
\]
Notice that $L^p(\mathcal A)$ is a $\UMD$ space for all $1<p<\infty$.
An enlightening example is obtained by choosing $\mathcal A=\mathcal B(H)$, the space of bounded linear operators on a complex separable  Hilbert space $H$ with orthonormal basis $\{e_j:j\in \mathbb N\}$ equipped with the usual trace
\[
\tau (\xi)=\sum_{j=1}^\infty \langle   \xi e_i,  e_i\rangle
\]
In this case $L^p(\mathcal A)$ is usually referred to as the $p$-th Schatten class and denoted by $S^p$.

Let now $(\Omega, \mu)$ be a $\sigma$-finite measure space and $\mathcal A$ a von Neumann algebra as above. We conveniently recall that $\mathcal M=L^\infty(\Omega) \otimes \mathcal A$ is also a von Neumann algebra equipped with normal semifinite faithful trace
\[
\nu (f \otimes \xi) = \left(\int_{\Omega} f\, \mathrm{d} \mu\right)\tau ( \xi),
\]
and that  we have the isometrically isomorphic identification $L^p(\mathcal M)\sim L^p(\Omega; L^p(\mathcal A))$.

We turn to the study of   noncommutative multilinear Rademacher maximal functions. This concept was first explored in the bilinear setting in \cite{DO}. Let $2\leq \kappa\leq n+1$, and  $p_1,\ldots, p_{\kappa}\in (1,\infty)$ be a H\"older tuple of exponents. We are interested in the Rademacher maximal functions associated to the tuple of spaces $X_{1},\ldots,  X_{n+1}$, where
 \begin{itemize}
 \item   $X_j=L^{p_j}(\mathcal A)$ for $1\leq j \leq \kappa$,
 \item $X_{j}=\mathbb C$ for $\kappa<j\leq n+1$,
 \end{itemize}
 equipped with the $(n+1)$-linear contraction
\[
\varpi(\xi_1,\ldots,\xi_{n+1}) = \tau \left(\prod_{j=1}^{\kappa}\xi_j  \right) \prod_{j=\kappa+1}^{n+1} \xi_j
\]
  We are able to establish a satisfactory multilinear   Rademacher maximal function estimate for the above tuple of spaces when  $\kappa \leq 3$. This is an improvement over the results of \cite{DO}, where the restriction $\kappa=2$ was imposed. Notice that the analysis of \cite{DO} concerned the nontangential version of the Rademacher maximal function, but the arguments therein can be recast in the dyadic setting as well.
\begin{prop} Suppose $\kappa=2$ or $\kappa=3$. Then the above defined tuple of spaces $X_{1},\ldots,  X_{n+1}$ has the $\mathrm{RMF}_\varpi$ property.
\end{prop}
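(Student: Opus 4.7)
The plan is to verify the finiteness of \eqref{eq:RMAssump} by a case analysis on the pair $(j_1,j_2)$, using Lemma \ref{l:RMFeq} to work with any convenient H\"older tuple of exponents $(p_j)$. Let $\mathcal K=\{1,\dots,\kappa\}$ and $\mathcal L=\{\kappa+1,\dots,n+1\}$ be the noncommutative and scalar index sets. The guiding principle in picking the minimizing index is to spend $v$ on a noncommutative slot whenever possible: choose $v\in\mathcal K\setminus\{j_1,j_2\}$ if this set is non-empty, and otherwise (an exceptional subcase occurring only when $\kappa=2$ and $\{j_1,j_2\}=\mathcal K$) choose $v\in\mathcal L$. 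Under $\kappa\le 3$, this forces $\#(\calJ\cap\mathcal K)\le\kappa-1$: at most one noncommutative slot remains in $\calJ$ when $\kappa=2$, and at most two when $\kappa=3$.

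Expanding $\RM_v$ via the explicit form of $\varpi$ and cyclicity of the trace, the norm equals the supremum over unit elements $e_v\in L^{p_v}(\mathcal A)$, cubes $Q_k\ni x$, and admissible Rademacher sequences, of
\[
\Bigl|\sum_k \tau(e_v\, G_k)\,\prod_{j\in\mathcal L\cap\mathcal J_{\Rad}} e_{j,k}\Bigr|,\qquad G_k:=\prod_{i\in \mathcal K\setminus\{v\}}\widetilde e_{i,k},
\]
with $\widetilde e_{i,k}=\langle f_i\rangle_{Q_k}$ when $i\in\calJ$ and $\widetilde e_{i,k}=e_{i,k}$ when $i\in\mathcal K\cap\mathcal J_\Rad$ (the subcase $v\in\mathcal L$ is handled analogously, with $e_v$ reappearing as a scalar factor outside the trace). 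The scalar Rademacher factors $\{e_{j,k}\colon j\in\mathcal L\cap\mathcal J_\Rad\}$, each satisfying $\sum_k|e_{j,k}|^2\le 1$ and hence the pointwise bound $|e_{j,k}|\le 1$, collapse into a single unit $\ell^2$-sequence via iterated Cauchy--Schwarz. Dualizing in the $e_v$ slot through the $L^{p_v}$--$L^{p_v'}$ trace pairing and invoking Kahane's contraction principle yields the bound
\[
\RM_{\calD,\varpi,\calJ,v}[(f_j)_{j\in\calJ}](x)\lesssim \sup_{Q_k\ni x}\bigl\|(G_k)_k\bigr\|_{\Rad(L^{p_v'}(\mathcal A))}.
\]

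At this point the noncommutative Khintchine inequality of Lust-Piquard and Pisier (see \cite{PX}) allows one to replace the Rademacher norm by the mixed column-row $L^{p_v'}(\mathcal A;\ell^2_{cr})$-norm. When $\#(\calJ\cap\mathcal K)\le 1$, which covers all cases for $\kappa=2$ and the non-extremal cases of $\kappa=3$, the factor $G_k$ is linear in the cube-averaged noncommutative variable, so the desired pointwise bound by $M_R f_{i_1}(x)$ follows from the single-space noncommutative $\RMF$ property of $L^{p_{i_1}}(\mathcal A)$ established in \cite{HMP,Kemp1,Kemp2}. The extremal case $\kappa=3$ with $\{j_1,j_2\}\subset\mathcal L$, where $G_k=\langle f_{i_1}\rangle_{Q_k}\langle f_{i_2}\rangle_{Q_k}$ is genuinely bilinear, is handled by combining Junge's noncommutative Doob maximal inequality giving $\|(\langle f_{i_1}\rangle_{Q_k})_k\|_{L^{p_{i_1}}(\ell^\infty)}\lesssim\|f_{i_1}\|_{p_{i_1}}$ with the square-function type bound $\|(\langle f_{i_2}\rangle_{Q_k})_k\|_{L^{p_{i_2}}(\ell^2_{cr})}\lesssim \|f_{i_2}\|_{p_{i_2}}$, glued by the noncommutative $L^p(\ell^\infty)\cdot L^q(\ell^2_{cr})\hookrightarrow L^r(\ell^2_{cr})$ H\"older inequality. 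A concluding application of H\"older on $\R^d$ gives the required $L^{p(\calJ)}(\R^d)$-bound.

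The main obstacle is precisely the bilinear factorisation through the noncommutative $L^p(\ell^\infty)$ and $L^p(\ell^2_{cr})$ spaces required in the case $\#(\calJ\cap\mathcal K)=2$: the delicate interplay between column and row Hilbertian structures under noncommutative multiplication is exactly what prevents the method from accommodating $\kappa\ge 4$ and explains the discrepancy with the commutative setting of Example \ref{ex:LpRMF}, where no row/column dichotomy arises.
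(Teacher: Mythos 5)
Your overall skeleton (case analysis on $\{j_1,j_2\}$, spending the fixed index $v$ on a noncommutative slot whenever possible, reducing via trace--H\"older and contraction, invoking Junge and Lemma \ref{l:RMFeq}) matches the paper, but two steps do not survive scrutiny. First, in the case $\kappa=3$, $\#(\calJ\cap\{1,\dots,\kappa\})=1$ --- which the paper singles out as the most delicate one --- your reduction keeps the noncommutative Rademacher variable inside the product $G_k=e_{2,k}\langle f_3\rangle_{Q_k}$ and then asserts that $\sup\|(G_k)_k\|_{\Rad(L^{p_v'}(\mathcal A))}$ is controlled by the single-space Rademacher maximal function $M_Rf_3$. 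That inference is unjustified: the coefficients $e_{2,k}$ are operators normalized only in $\Rad(L^{p_2}(\mathcal A))$, not scalars in the $\ell^2$ ball, so neither the Kahane contraction principle nor the definition of the $\RM$ norm applies, and a Rademacher norm of a noncommutative product does not factor into a Rademacher norm times an $\RM$ norm (this is exactly the column/row interplay you yourself flag as the obstruction in the other case). The paper avoids this by randomizing and applying H\"older \emph{inside the trace} before taking any Rademacher norms, which decouples $\xi_{\ell,2}$ from $E_\ell f_3$ and leaves only scalar $\ell^2$ coefficients on $E_\ell f_3$; the resulting estimate $\sup_{\|\lambda\|_{\Rad}=1}\|(\lambda_\ell E_\ell f_3(x))\|_{\Rad(L^{p_3}(\mathcal A))}\lesssim\|a_{f_3}(x)\|_{L^{2p_3}}\|b_{f_3}(x)\|_{L^{2p_3}}$ is then \emph{proved} there from Junge's factorization \eqref{e:1} together with type $2$ of $L^{2p_3}(\mathcal A)$. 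It is not available from \cite{HMP,Kemp1,Kemp2}, which give RMF for type-$2$ spaces and UMD lattices; $L^p(\mathcal A)$ with $1<p<2$ is neither, so you cannot simply cite a ``single-space noncommutative RMF property'' at this point.

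Second, in the extremal case $\#(\calJ\cap\{1,\dots,\kappa\})=2$ your auxiliary bound $\|(\langle f_{i_2}\rangle_{Q_k})_k\|_{L^{p_{i_2}}(\ell^2_{cr})}\lesssim\|f_{i_2}\|_{p_{i_2}}$ is false: the $Q_k$ are \emph{all} cubes containing $x$, so these are martingale averages, not differences, and already in the scalar case $\sum_k|\langle f\rangle_{Q_k}(x)|^2$ diverges for generic $f\geq 0$. The square-function/$\ell^\infty$ splicing is also unnecessary: in this case every Rademacher slot is scalar, so after Cauchy--Schwarz the coefficients lie in the unit ball of $\ell^1$ and one only needs the pointwise bound $\sup_\ell\|E_\ell f_2(x)\,E_\ell f_3(x)\|_{L^{p_1'}(\mathcal A)}\lesssim\prod_{j=2,3}\|a_{f_j}(x)\|_{L^{2p_j}}\|b_{f_j}(x)\|_{L^{2p_j}}$, which follows from \eqref{e:1} and trace H\"older, and then \eqref{e:0}, H\"older on $\R^d$ and Lemma \ref{l:RMFeq} finish the case, exactly as in the paper. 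So while you have correctly located Junge's inequality as the engine, the two key noncommutative estimates your argument rests on are, respectively, unproved and false as stated, and both need to be replaced by the factorization arguments above.
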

\begin{proof} We work with the dyadic lattice $\mathcal D_0$ in dimension $d$ and therefore make use of the identification $L^p(\mathcal M)\sim L^p(\R^d; L^p(\mathcal A))$, where $\mathcal M=L^\infty(\R^d) \otimes   \mathcal A.$ The proof is split into several cases, all of which will make use of the following celebrated result of Junge \cite{JUN}: if $f\in L^p(\mathcal M)$, we may find
 $a_f,b_f\in L^{2p}(\mathcal M)$ and contractions $y_{\ell,f}\in \mathcal M$ such that
\begin{equation}
\label{e:0}
\|a_f \|_{L^{2p}(\mathcal M)}\|b_f\|_{L^{2p}(\mathcal M)}\lesssim \|f \|_{L^{p}(\mathcal M)}
\end{equation}
and
\begin{equation}
\label{e:1}
E_\ell f\coloneqq \sum_{\substack{Q\in \mathcal D_0 \\ \ell(Q)=2^{-\ell}}} E_Q f= a_f y_{\ell,f} b_f.
\end{equation}
By the definition of $\RMF_{\varpi}$ \eqref{eq:RMAssump}, we will prove the result according to the following cases:
\begin{enumerate}
\item[(i)] $\{j_1, j_2\}\cap \{1,\ldots,\kappa\}=\emptyset$;
\item[(ii)] $\#\{j_1, j_2\}\cap \{1,\ldots,\kappa\}=1$;
\item[(iii)] $\#\{j_1, j_2\}\cap \{1,\ldots,\kappa\}=2$
\end{enumerate}
In case (i), we will take $j_\calP=1$, so that $\#\calJ\cap \{1,\ldots,\kappa\}$ can be $0$, $1$ or $2$(if $\kappa=3$). In case (ii), without loss of generality, we can assume that $1\notin \{j_1, j_2\}$ so that we can still take $j_\calP=1$, then $\#\calJ\cap \{1,\ldots,\kappa\}$ can be $0$ or $1$(if $\kappa=3$). In case (iii), if $\kappa=3$, again we can assume $1\notin \{j_1, j_2\}$ and take  $j_\calP=1$, then $\#\calJ\cap \{1,\ldots,\kappa\}=0$; if $\kappa=2$ we will take $j_\calP=3$ and in this case $\#\calJ\cap \{1,\ldots,\kappa\}=0$. Let us consider the last situation first. We have
\[\begin{split}
&\quad \RM_{\calD_0, \varpi,\calJ,3}[(f_j)_{j \in \calJ}](x)   = \Big\| \big\{ (\langle f_j \rangle_Q)_{j \in \calJ} \colon x \in Q \in \calD_0\big\} \Big\|_{\RM_3(\varpi, \calJ)}
\\&\leq \sup_{\substack{\|\xi_{\ell,u}\|_{\Rad(L^{p_u}(\mathcal A))}=1\\ u=1,2}}  \sup_{\substack{i\in \calP_{\Rad}\setminus \{1,2\}\\ \|\xi_i\|_{\ell^2}=1}}  \Big| \sum_{\ell}     \tau \left(
\xi_{\ell,1}\xi_{\ell,2}  \right)  \prod_{i\in \calP_{\Rad}\setminus \{1,2\}}\xi_{\ell, i}\prod_{j\in \mathcal J} E_\ell f_j(x)\Big| \\
&=\sup_{\substack{\|\xi_{\ell,u}\|_{\Rad(L^{p_u}(\mathcal A))}=1\\ u=1,2}}  \sup_{\substack{i\in \calP_{\Rad}\setminus \{1,2\}\\ \|\xi_i\|_{\ell^2}=1}}  \mathbb E \Big| \tau\Big(  \sum_{k}    \varepsilon_k
\xi_{k,1}\sum_\ell \varepsilon_\ell\xi_{\ell,2}   \prod_{i\in \calP_{\Rad}\setminus \{1,2\}}\xi_{\ell, i}\prod_{j\in \mathcal J} E_\ell f_j(x)\Big) \Big|\\
\\ & \leq  \sup_{\ell}\prod_{j\in \mathcal J} |E_\ell f_j(x)|\le \prod_{j\in \mathcal J} M f_j(x),
 \end{split}\] where we have used H\"older's inequality and Kahane contraction principle. We conclude this case by using the boundedness of $\prod_{j\in \mathcal J} M f_j(x)$. Now we are left to deal with the remaining cases, keep in mind that we always have $j_\calP=1$.

\subsubsection*{Case $\#\mathcal J \cap \{1,\ldots,\kappa\}=2$}  This forces that $\kappa=3$.
We are then allowed to estimate \[\begin{split}
\RM_{\calD_0, \varpi,\calJ,1}[(f_j)_{j \in \calJ}](x) & = \Big\| \big\{ (\langle f_j \rangle_Q)_{j \in \calJ} \colon x \in Q \in \calD_0\big\} \Big\|_{\RM_1(\varpi, \calJ)}\\&\leq \sup_{|\xi_1|_{L^{p_1}(\mathcal A)}=1} \sup_{\ell}  \left| \tau \left(  f_1(x) E_\ell f_2(x)  E_\ell\xi_3\right)\right| \prod_{j\in \mathcal J\setminus \{2,3\} } Mf_j
 \\ & \lesssim \prod_{j=2}^{3} \|a_{f_j}(x)\|_{L^{2p_j}(\mathcal A)}\|b_{f_j}(x)\|_{L^{2p_j}(\mathcal A)}\prod_{j\in \mathcal J\setminus \{2,3\} } Mf_j
 \end{split}\]
where the first inequality follows from the definition and the second by \eqref{e:1} and H\"older's inequality. For $q_1=(p_1)'$ Holder's inequality yields
\[
\begin{split}
\|\RM_{\calD_0, \varpi,\calJ,1}[(f_j)_{j \in \calJ}]  \|_{L^{q_1}(\R^d)}& \lesssim \prod_{j=2}^3   \|a_{f_j}\|_{L^{2p_j}(\mathcal M)}\|b_{f_j}\|_{L^{2p_j}(\mathcal M)}\prod_{j\in \mathcal J\setminus \{2,3\} } \| Mf_j\|_{L^\infty(\R^d)} \\ &\lesssim
 \prod_{j=2}^3    \| f_j \|_{L^{ p_j}(\mathcal M)} \prod_{j\in \mathcal J\setminus \{2,3\} }\| f_j\|_{L^\infty(\R^d)}.
 \end{split}
\]
In view of \eqref{e:0}, we obtain the bound
\[
\RM_{\calD_0, \varpi,\calJ}:\left(\prod_{j=2}^3 L^{p_j}(\R^d; L^{p_j}(\mathcal A) ) \right) \times \left(  \prod_{j\in \mathcal J\setminus \{2,3\} } {L^\infty(\R^d)} \right) \to L^{q_1}(\R^d)
\]
 and conclude the required condition for this class of $\mathcal J$ by means of Lemma \ref{l:RMFeq}.
\subsubsection*{Case $\#\mathcal J \cap \{1,\ldots,\kappa\}=1$} This is actually the most complex case.
If $\kappa=2$, we may proceed similar as in the previous case, details are omitted. We are therefore left with treating the case where $\kappa=3$ and without loss of generality we assume $2\in \mathcal J_n\setminus \mathcal J$. We may estimate the Rademacher maximal function corresponding to the $
{\RM_1(\varpi, \calJ)}$ norm
\[\begin{split}
&\quad \RM_{\calD_0, \varpi,\calJ,1}[(f_j)_{j \in \calJ}](x)   = \Big\| \big\{ (\langle f_j \rangle_Q)_{j \in \calJ} \colon x \in Q \in \calD_0\big\} \Big\|_{\RM_1(\varpi, \calJ)}\\&\leq \sup_{\substack{\|\xi_{\ell,u}\|_{\Rad(L^{p_u}(\mathcal A))}=1\\ u=1,2}}  \sup_{\substack{i\in \calP_{\Rad}\setminus \{2\}\\ \|\lambda_{\ell,i}\|_{\Rad  }=1}} \left|\sum_{\ell}    \tau \left(   \xi_1(x) \xi_{\ell,2} E_\ell f_3 \prod_{i\in \calP_{\Rad}\setminus \{2\}}\lambda_{\ell,i}\right)  \prod_{j\in \mathcal J\setminus \{3\}} E_\ell f_j(x)\right|
\\ & \leq  \left( \sup_{\|\lambda_{\ell }\|_{\Rad  }=1} \left\| \lambda_\ell E_\ell f_3(x)  \right\|_{\Rad(L^{p_3}(\mathcal A))} \right) \prod_{j\in \mathcal J\setminus \{3\} } Mf_j,
 \end{split}\]
where the second step is obtained via H\"older's inequality and the  Kahane contraction principle. Now using \eqref{e:1}, we obtain
\[
\begin{split}
&\quad \left\| \lambda_\ell E_\ell f_3(x)  \right\|_{\Rad(L^{p_3}(\mathcal A))}
 \lesssim \|a_{f_3}(x)\|_{L^{2p_3}(\mathcal A)}
\left\| \lambda_\ell y_{\ell,f_3}(x) b_{f_3}(x) \right\|_{\Rad(L^{2p_3}(\mathcal A))}
\\
& \lesssim \|a_{f_3}(x)\|_{L^{2p_3}(\mathcal A)} \left(\sum_{\ell} |\lambda_\ell^2| \|y_{\ell,f_3}(x) b_{f_3}(x)  \|_{ L^{2p_3}(\mathcal A))}^2  \right)^{\frac12}
 \lesssim
  \|a_{f_3}(x)\|_{L^{2p_3}(\mathcal A)}\|b_{f_3}(x)\|_{L^{2p_3}(\mathcal A)}.
\end{split}
\]
We have crucially used type $2$ of $L^{2p_3}(\mathcal A)$  when passing to the second line.
Taking $L^{p_3}(\R^d)$ norm and using \eqref{e:0}, we realize  that we have proved the estimate
\[
\RM_{\calD_0, \varpi,\calJ,1}:  L^{p_3}(\R^d; L^{p_3}(\mathcal A) ) \times   \prod_{j\in \mathcal J\setminus \{3 \} } {L^\infty(\R^d)} \to L^{p_3}(\R^d)
\]
which completes the proof of this case.
\subsubsection*{Case $\mathcal J \cap \{1,\ldots, \kappa\}=\varnothing$} This is the easiest case. If $\kappa=2$ it can be proved similarly as the the previous case (even easier as we don't need to deal with $E_\ell f_3$). And if $\kappa=3$, it can be proved similarly as at the very beginning.
\end{proof}

\end{exmp}

\section{Operator-valued multilinear shifts}\label{sec:ShiftBdd}
Our basic result concerning the boundedness of n-linear operator-valued dyadic shifts is summarized by the following sparse domination principle.
\begin{thm}\label{thm:MultiShifts}
Let  $n \ge 2$, suppose $X_1, \dots, X_n, Y_{n+1}$ are $\UMD$ spaces and denote $X_{n+1}=Y^*_{n+1}$.
Assume that $(X_1, \dots, X_{n+1})$ has the $\RMF_\varpi$ property with some $\varpi$ as described in Section \ref{sec:RMF}.
Let $f_m \in L^\infty_c(X_m)$ for $m=1,\ldots, n+1$.
Suppose $\calD$ is a dyadic grid and $\eta \in (0,1)$.
Then there exists an $\eta$-sparse collection $\mathcal{S}=\mathcal{S}((f_m), \eta)\subset \calD$ so that the following holds.

Suppose $S^k:=S^k_{\calD}$, $k=(k_1,\dots, k_{n+1})$, $0 \le k_i \in \Z$, is an $n$-linear dyadic shift with complexity $k$
and with coefficient operators $a_{K,(Q_i)} \in  \calL(\prod_{m=1}^n X_m, Y_{n+1})$.
Recall the collection $\calC(S^k)$ of normalized coefficients  from \eqref{eq:NormCoef}.
We have
\begin{equation}\label{SparseForShifts}
|\langle S^k(f_1, \ldots, f_n), f_{n+1}\rangle| \lesssim_{\eta} (1+\kappa)^{n-1} \calR_{\varpi}(\calC(S^k))  \sum_{Q\in\mathcal{S}}|Q|\prod_{m=1}^{n+1}\bla |f_m|_{X_m} \bra_Q,
\end{equation}
where $\kappa = \max k_m$.

In particular, if $p_1, \dots, p_n \in (1, \infty]$, $q_{n+1} \in (1/n, \infty)$ and $\sum_{m=1}^n 1/p_m=1/q_{n+1}$,
then
\begin{equation}\label{eq:MultiShiftBdd}
\| S^k(f_1, \dots, f_n) \|_{L^{q_{n+1}}(Y_{n+1})}
\lesssim (1+\kappa)^{n-1}\calR_{\varpi}(\calC(S^k)) \prod_{m=1}^n \| f_m\|_{L^{p_m}(X_m)}.
\end{equation}
\end{thm}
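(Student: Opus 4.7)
The strategy is to derive the sparse estimate \eqref{SparseForShifts} as a consequence of Lemma \ref{lem:SparseModel}, after which \eqref{eq:MultiShiftBdd} follows immediately from the general implication \eqref{eq:consequenceofsparse}. To invoke Lemma \ref{lem:SparseModel} we must produce two constants: the pointwise coefficient bound $A_1$, and the $L^p$-bound with constant $A_2$ on the auxiliary non-cancellative operator $U_{\calD'}$, for some convenient H\"older tuple of exponents (we choose $p_m = n+1$ for all $m$ for definiteness). We aim to prove $A_1 \lesssim \calR_\varpi(\calC(S^k))$ and $A_2 \lesssim (1+\kappa)^{n-1} \calR_\varpi(\calC(S^k))$, so that $A_1 + A_1 \kappa + A_2 \lesssim (1+\kappa)^{n-1}\calR_\varpi(\calC(S^k))$ supplies the constant in \eqref{SparseForShifts}. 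The bound on $A_1$ is immediate: testing the definition of $\calR_\varpi$-boundedness against the single coefficient $a_{K,(Q_i)}$ along any tight admissible partition causes the $\Rad$-norms, the fixed-slot norm $|\cdot|_{X_{j_\calP}}$, and the $\RM_v$-quantity to all collapse to ordinary Banach space norms, producing the normalized coefficient inequality directly.

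\textbf{Main estimate on $U_{\calD'}$.} Dualize the form $\langle U_{\calD'}(g_1,\ldots,g_n), g_{n+1}\rangle$ and decompose each step function $u_{m,Q}$ into its average $\ave{u_{m,Q}}_Q 1_Q$ plus the mean-zero-on-children part $v_{m,Q} = \sum_{\eta \neq 0}c_Q^{\eta,m} h_Q^\eta$, where $|c_Q^{\eta,m}| \lesssim 1$. Expanding yields a sum over $\calI \subset \{1,\ldots,n+1\}$ of $2^{n+1}$ auxiliary forms, classified according to which slots contribute a cancellative $v_{m,Q}$ as opposed to an average. In the principal range $\#\calI \ge 2$, fix two indices $j_0, j_1 \in \calI$, apply the decoupling inequality \eqref{eq:DecEst} simultaneously in the sums over $Q_{j_0}$ and $Q_{j_1}$ (justified by the mean-zero-on-children structure of $v_{j_0,Q}, v_{j_1,Q}$), and then use Pythagoras' theorem \eqref{eq:Pythagoras} to convert the decoupled $\Rad$-norms into $L^{p_m}(X_m)$-norms for $m\in\{j_0, j_1\}$. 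What remains is a sum over $K$ of coefficient actions, which is estimated by invoking the $\calR_\varpi$-boundedness of $\calC(S^k)$ with the tight admissible partition $\calP_{\Rad} = \{j_0, j_1\}$, $j_\calP$ chosen as the $v$ singled out by the $\RMF_\varpi$ condition, and $\calP_{\RM}$ the remaining $n-2$ average slots. The resulting $\RM_{j_\calP}(\varpi, \calP_{\RM})$-factor, applied to the tuple of averages $(\ave{g_m}_{Q_m})_{m\in\calP_{\RM}}$, is pointwise dominated by the maximal function $\RM_{\calD, \varpi, \calP_{\RM}, j_\calP}[(g_m)_{m\in\calP_{\RM}}]$, and one concludes via H\"older's inequality and the $\RMF_\varpi$ hypothesis \eqref{eq:RMAssump}. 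The powers $(1+\kappa)^{n-1}$ arise from summing over the dyadic complexity parameters in the non-dualized slots. The remainder subcases $\#\calI \le 1$ are handled analogously: when $\#\calI = 1$ the duality slot provides the required second cancellative slot, and when $\#\calI = 0$ the resulting form is dominated pointwise by a product of iterated Hardy--Littlewood maximal functions of $|g_m|_{X_m}$.

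\textbf{Main obstacle.} The crucial step is the alignment in the principal case: the two cancellative slots produced by decoupling must play the role of $\calP_{\Rad}$, and $j_\calP$ must be selected so that the $\RM_v$-factor emerging from the $\calR_\varpi$-bound matches one of the configurations covered by the $\RMF_\varpi$ condition \eqref{eq:RMAssump}, which guarantees boundedness only for a min-over-$v$ optimal choice. This forces one to retain flexibility in the dualization slot, typically by cycling through the $n+1$ adjoint formulations of $S^k$ so that for each pair $\{j_0, j_1\}$ the appropriate dualization is used. In the bilinear case $n = 2$ the set $\calP_{\RM}$ is empty, the $\RM_v$-factor degenerates, and no $\RMF$ hypothesis is required, consistently with Theorem \ref{th:introbil}. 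For $n \ge 3$ the $\RMF_\varpi$ assumption is indispensable, and its precise form is dictated by exactly this matching.
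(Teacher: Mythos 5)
Your overall frame — reduce to Lemma \ref{lem:SparseModel}, get $A_1$ from the definition of $\calR_\varpi$-boundedness, and prove the $L^p$ input via decoupling, $\calR_\varpi$-boundedness and the $\RMF_\varpi$ hypothesis — is the paper's frame, and your $A_1$ step and your remarks about choosing $j_\calP=v$ and about $n=2$ needing no RMF are correct. The genuine gap is in how you try to verify the $A_2$ hypothesis. You interpret Lemma \ref{lem:SparseModel} as requiring the $L^p$ bound for the operators $U_{\calD'}$ built from \emph{arbitrary} step functions $u_{m,Q}$ with $|u_{m,Q}|\le |Q|^{-1/2}$, and you therefore split each $u_{m,Q}$ into average plus cancellative part and must handle the terms with $\#\calI\le 1$. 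Those terms cannot be bounded by $\calR_\varpi(\calC(S^k))$ at all: already in the scalar case $n=2$, $X_m=\C$, with the maximally normalized positive coefficients $a_{K,(Q_i)}=\prod_m|Q_m|^{1/2}/|K|^n$ (an $\calR$-bounded family) and $u_{m,Q}=|Q|^{-1/2}1_Q$ in every slot, the form collapses to $\sum_{K\in\calD'}|K|\prod_{m}\langle|g_m|\rangle_K$, whose contribution from each fixed scale inside the support is comparable to $1$ for $g_m=1_{[0,1)^d}$, so the sum over scales diverges. Hence neither the claim that ``the duality slot provides the required second cancellative slot'' when $\#\calI=1$ (by definition of $\calI$ that slot then carries only an average) nor the claim that the $\#\calI=0$ term is dominated by a product of maximal functions can be repaired; the unboundedness is over the sum of scales, not pointwise. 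The fix is that the lemma only asks for the $L^p$ bound for the operators $U_{\calD'}$ built from the \emph{given} functions $u_{m,Q_m}=\wt h_{Q_m}$ of the shift, i.e.\ for the truncated shifts $S^k_{\calD'}$, which always retain two genuinely cancellative Haar slots; this is exactly what the paper proves, and your principal case $\#\calI\ge2$ is (morally) that proof.

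Within that principal case your sketch also deviates from what actually works. The paper first expands $E^{k_m}_K=\sum_{l<k_m}\Delta^l_K+E_K$ in the $n-2$ non-cancellative, non-fixed slots (giving $(1+\kappa)^{n-2}$ terms) and splits $\calD$ into the sublattices $\calD_{j,\kappa}$ (another factor $1+\kappa$, which together account for $(1+\kappa)^{n-1}$); it then applies $\calR_\varpi$-boundedness \emph{pointwise} in $(x,y)\in\R^d\times\calV^n$, with the sequence index being $K\in\calD_{j,\kappa}$, the two cancellative slots (plus the extra $\Delta^l_K$ slots) going to $\Rad$-norms via the non-tight partition form of Remark \ref{rem:OldRBdd}, the remaining average slots to the $\RM$-quantity, and $f_{n+1}(x)$ as the fixed element. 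Only after that does decoupling \eqref{eq:DecEst} (via Kahane--Khintchine) convert the $\Rad$-factors into $\|f_m\|_{L^{p_m}(X_m)}$; Pythagoras \eqref{eq:Pythagoras} plays no role here (it is used for the paraproducts). If, as you propose, you convert the $j_0,j_1$ slots into $L^{p}$ norms before invoking $\calR_\varpi$-boundedness, there are no $\Rad$-norms left to feed into that definition, so the order of operations matters and your outline as written does not close.
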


\begin{proof}
The estimate
$$
|\langle a_{K,(Q_i)}[e_1, \ldots, e_{n}], e_{n+1} \rangle| \le \calR_{\varpi}(\calC(S^k)) \frac{\prod_{m=1}^{n+1}|Q_m|^{1/2}}{|K|^n} \prod_{m=1}^{n+1} |e_m|_{X_m}
$$
follows directly from the definition.
We are considering a shift
$$
S^k (f_1,\dots,f_n)=\sum_{K \in \calD} A_K(f_1, \dots, f_n),
$$
where
$$
 A_K(f_1, \dots, f_n)
 = \sum_{\substack{Q_1, \dots, Q_{n+1} \in \calD \\ Q_i^{(k_i)}=K}}
a_{K,(Q_i)}[ \langle f_1, \wt h_{Q_1} \rangle, \dots, \langle f_n, \wt h_{Q_n} \rangle] \wt h_{Q_{n+1}}.
$$
By Lemma \ref{lem:SparseModel} it remains to prove that
\begin{equation}\label{eq:ShiftSomeExpo}
|\langle S^k(f_1, \ldots, f_n), f_{n+1}\rangle|
\lesssim (1+\kappa)^{n-1} \calR_{\varpi}(\calC(S^k)) \prod_{m=1}^{n+1} \|f_m\|_{L^{p_m}(X_m)}
\end{equation}
for some $p_m \in (1,\infty)$ satisfying $\sum_{m=1}^{n+1} 1/p_m = 1$. Notice that
by proving this, we also prove the corresponding estimate for all subshifts
$S^k_{\calD'}=\sum_{K \in \calD'} A_K$, where $\calD' \subset \calD$.
This is required in the assumptions of Lemma \ref{lem:SparseModel}.

If $n \ge 3$ we assume the following. Let $j_0,j_1 \in \calJ_n$ be the indices such that the corresponding Haar functions of the shift $S^k$ are cancellative.
Then, since $(X_1, \dots, X_{n+1})$ is assumed to have the $\RMF_\varpi$ property there exists a $v \in \calJ_n \setminus \{j_0,j_1\}$
so that the maximal functions $\RM_{\calD, \varpi, \calJ,v}$ are bounded for all $\calJ \subset \calJ_n \setminus (\calJ \cup \{j_0,j_1\})$, see \eqref{eq:RMAssump}.
Notice that $\wt h_{Q_{v}}=h_{Q_{v}}^0$.
For convenience of notation we assume that $v =n+1$ but the general case is handled similarly. If $n=2$ there are no $\RMF_\varpi$ assumptions involved and we assume
for convenience that $\wt h_{Q_{3}}=h_{Q_{3}}^0$.

Having made the initial assumptions we proceed with an arbitrary $n \ge 2$.
We let $\{ \{n+1\}, \calJ_{\Rad}^0, \calJ_{\RM}^0\}$ be the admissible partition such that $\#\calJ_{\Rad}^0=2$ and
$\wt h_{Q_j}=h_{Q_j}$ for $j \in \calJ_{\Rad}^0$; for $j \in \calJ_{\RM}^0$ we have $\wt h_{Q_j}=h_{Q_j}^0$.
If $m \in \calJ^0_{\Rad}$, then $\langle f_m, \wt h_{Q_m} \rangle=\langle \Delta^{k_m}_K f_m,  h_{Q_m} \rangle$,
and if $m \in \calJ_{\RM}^0$, then $\langle f_m, \wt h_{Q_m} \rangle=\langle E^{k_m}_K f_m,  h_{Q_m}^0 \rangle$.
By writing for $m \in \calJ_{\RM}^0$ that
$$
E^{k_m}_K f_m= \sum_{l=0}^{k_m-1} \Delta^{l}_K f_m + E_K f_m,
$$
we see that the shift $S^k$ can be split into at most $(1+\kappa)^{n-2}$
operators of the form
$$
\sum_{K \in \calD} A_K(P_K^{l_1}f_1, \dots, P^{l_n}_K f_n ),
$$
where the following holds. We have $0 \le l_m \in \Z$ and $l_m \le k_m$. If $l_m \not=0$ then $P^{l_m}_K =\Delta^{l_m}_K$, and  if $l_m=0$ then
$P^{l_m}_K=P_K$ can be either $\Delta_K$ or $E_K$. For $m \in \calJ^0_{\Rad}$ we have $l_m=k_m$ and $P^{l_m}_K = \Delta^{l_m}_K$.
Now we fix one such operator and show that it is bounded as desired. Let $\calJ'_{\RM} \subset \calJ_{\RM}^0$ be the subset of those indices
$m$ such that $P^{l_m}_K=E_K$, and set $\calJ'_{\Rad}=\{1, \dots, n\} \setminus \calJ_{\RM}'$.
Recall the lattices $\calD_{j,\kappa}$, $j \in \{0, \dots, \kappa\}$, from \eqref{eq:SubLattice}.
We fix one $j$ and start to consider the term
\begin{equation}\label{eq:SplitShift}
\Big\langle\sum_{K \in \calD_{j,\kappa}} A_K(P_K^{l_1}f_1, \dots, P^{l_n}_K f_n ), f_{n+1} \Big\rangle.
\end{equation}
We prove an estimate that is independent of $j$.

Let $K \in \calD_{j,\kappa}$.
Define an operator-valued kernel $a_K \colon \R^{d(n+1)} \to \calL(\prod_{m=1}^n X_m, Y_{n+1})$ by
$$
a_K(x,y_1,\dots, y_n)
= \sum_{\substack{Q_1, \dots, Q_{n+1} \in \calD \\ Q_i^{(k_i)}=K}}
|K|^n a_{K,(Q_i)}\wt h_{Q_1}(y_1)\cdots \wt h_{Q_{n}}(y_n)\wt h_{Q_{n+1}}(x).
$$
Notice that $a_K$ is supported in $K^{n+1}$ and that if $x,y_1, \dots, y_n \in K$, then for some $\epsilon\in \{-1,1\}$ we have
$\epsilon a_K(x,y_1,\dots, y_n) \in \calC(S^k)$ .
Thus, there holds
$$
\calR_{\varpi}(\{a_K(x,y_1, \dots,y_n)\colon  K \in \calD_{j,\kappa}, x,y_1, \dots, y_n \in \R^d\}) \le \calR_{\varpi}(\calC(S^k)).
$$
Below we write $a_K(x,y)=a_K(x, y_1, \dots, y_n)$ for $y=(y_1, \dots, y_n) \in \R^{dn}$.
We have that
\begin{equation}\label{eq:ShiftAsIntOp}
\begin{split}
A_K(P_K^{l_1}f_1, \dots, P^{l_n}_K f_n )(x)
=\frac{1}{|K|^n} \int_{K^n} a_K(x, y) [ P_K^{l_1}f_1(y_1),\dots, P^{l_n}_K f_n(y_n)] \ud y.
\end{split}
\end{equation}

Let $(\calV, \nu)$ be the space related to decoupling, see Section \ref{sec:BanachProperties}.
Let also $\calV^n$ be the $n$-fold product of these, and let $\nu_n$ be the related measure. If $y=(y_1, \dots, y_n) \in \calV^n$, then
$y_{m,K}$ for $m \in \{1, \dots, n\}$ and $K \in \calD$ denotes the coordinate of $y_m$ related to $K$. If $y \in \calV^n$, we write
$y_K$ to mean the tuple $(y_{1,K}, \dots, y_{n,K})$.
If $K \in \calD_{j,\kappa}$ then we can rewrite \eqref{eq:ShiftAsIntOp} as
\begin{equation}\label{eq:IntroV}
A_K(P_K^{l_1}f_1, \dots, P^{l_n}_K f_n )(x) =\int_{\calV^n} a_K(x, y_K) [ P_K^{l_1}f_1(y_{1,K}),\dots, P^{l_n}_K f_n(y_{n,K})] \ud \nu_n (y) .
\end{equation}

Applying \eqref{eq:IntroV} it is seen that \eqref{eq:SplitShift} equals
\begin{equation*}
\begin{split}
\int_{\R^d}\int_{\calV^n}\sum_{K \in \calD_{j,\kappa}} \langle a_K(x, y_K)
[ P_K^{l_1}f_1(y_{1,K}),\dots, P^{l_n}_K f_n(y_{n,K})], f_{n+1}(x) \rangle \ud \nu_n (y) \ud x.
\end{split}
\end{equation*}
Notice that here we may multiply each of the functions inside $a_K(x,y_K)$ by $1_K(x)$, since $a_K(x, \cdot) = 0$ unless $x \in K$.
Applying the $\calR_\varpi$-boundedness (see Remark \ref{rem:OldRBdd}) of the kernels
gives that the absolute value of \eqref{eq:SplitShift} is dominated by
\begin{equation} \label{eq:AfterR}
\begin{split}
&\int_{\R^d}  \int_{\calV^n}  \prod_{m \in \calJ_{\Rad}'} \| (1_K(x)\Delta^{l_m}_K f_m(y_{m,K}))_{K \in \calD_{j,\kappa}} \|_{\Rad(X_m)} \\
& \times \Big\| \Big\{\Big( 1_K(x)E_K f_m(y_{m,K})\Big)_{m \in \calJ'_{\RM}} \colon K \in \calD_{j,\kappa} \Big\} \Big\|_{\RM(\varpi, \calJ'_{\RM}, n+1)} |f_{n+1}(x)|_{X_{n+1}} \ud \nu_n (y) \ud x.
\end{split}
\end{equation}

Let $F_{\calJ'_{\RM}}$ be the tuple of the functions $f_m$ with indices $m \in \calJ'_{\RM}$.
We notice that for all $x \in \R^d$ and $y \in \calV^n$ there holds that
$$
\Big\| \Big\{\Big( 1_K(x)E_K f_m(y_{m,K})\Big)_{m \in \calJ'_{\RM}} \colon K \in \calD_{j,\kappa} \Big\} \Big\|_{\RM(\varpi, \calJ'_{\RM}, n+1)}
\le \RM_{\calD,\varpi, \calJ'_{\RM}, n+1} (F_{\calJ'_{\RM}})(x),
$$
and by assumption we have that
\begin{equation}\label{eq:RMAssumption}
\begin{split}
\Big(\int_{\R^d} \int_{\calV^n} \RM_{\calD,\varpi, \calJ'_{\RM},n+1} (F_{\calJ'_{\RM}})(x)^r \ud \nu_n (y) \ud x \Big)^{1/r}
\lesssim \prod_{m \in \calJ'_{\RM}} \|f_m \|_{L^{p_m}(X_m)}.
\end{split}
\end{equation}
Here $r$ is the exponent defined by $1/r=\sum_{m \in \calJ'_{\RM}}1/p_m$.

Let $m \in \calJ'_{\Rad}$. Then
Kahane-Khintchine inequality \eqref{eq:KK} shows that
\begin{equation}\label{eq:RadTerm}
\begin{split}
\int_{\R^d} \int_{\calV^n} & \| (1_K(x)\Delta_K^{l_m}f_m(y_{m,K}))_{K \in \calD_{j,\kappa}}\|_{\Rad(X_m)}^{p_m} \ud \nu_n (y) \ud x \\
& \sim \E \int_{\R^d} \int_{\calV} \Big | \sum_{K \in \calD_{j,\kappa}} \varepsilon_K 1_K(x)\Delta_K^{l_m}f_m(y_{m,K}) \Big |_{X_m}^{p_m} \ud x \ud \nu (y)
\lesssim \| f \|_{L^{p_m}(X_m)}^{p_m}.
\end{split}
\end{equation}
The last step is based on the decoupling estimate \eqref{eq:DecEst}.

Now, if we use H\"older's inequality in \eqref{eq:AfterR} and combine the result with \eqref{eq:RMAssumption} and \eqref{eq:RadTerm}, we finally see that
$$
|\eqref{eq:SplitShift}| \lesssim \prod_{m=1}^{n+1} \|f_m \|_{L^{p_m}(X_m)}.
$$
This concludes the proof.
\end{proof}

\section{Operator-valued multilinear paraproducts}\label{sec:ParaBdd}

We begin with some definitions.
Suppose $X_1, \dots, X_n, Y_{n+1}$ are Banach spaces.
Suppose  $T \in \calL(\prod_{m=1}^nX_m,Y_{n+1})$.
Let $k \in \{2, \dots, n\}$ and $e_m \in X_m$ for $m \in \{k, \dots, n\}$.
Define the operator $T[e_k, \dots, e_n] \in \calL(\prod_{m=1}^{k-1}X_m,Y_{n+1})$ by
$$
(T[e_k, \dots, e_n])[e_1, \dots, e_{k-1}]:=
T[e_1, \dots, e_n],
$$
where $ e_m \in X_m$, $m \in \{1, \dots, k-1\}$.
We see that for $k \in \{2, \dots, n-1\}$ there holds that
\begin{equation}\label{eq:Iteration1}
 \| T[e_k, \dots, e_n]  \|_{\calL(\prod_{m=1}^{k-1}X_m,Y_{n+1})}
\le |e_k|_{X_k}  \| T[e_{k+1}, \dots, e_n]  \|_{\calL(\prod_{m=1}^{k}X_m,Y_{n+1})}
\end{equation}
and that
\begin{equation}\label{eq:Iteration2}
\big \|T[e_n]\big\|_{\calL(\prod_{m=1}^{n-1}X_m,Y_{n+1})}
\le |e_n|_{X_n} \| T \|_{\calL(\prod_{m=1}^{n}X_m,Y_{n+1})}.
\end{equation}

Let $a = (a_Q)_{Q \in \calD}$, where
$$
a_Q \in \calL\Big(\prod_{m=1}^nX_m ,Y_{n+1}\Big).
$$
Assume that there exists a $\UMD$ subspace $\calT_k \subset \calL(\prod_{m=1}^kX_m,Y_{n+1}) $
for every $k \in \mathcal{I}_{n-1}$ so that
$$
a_Q  \in \calT_n
$$
and for $k < n$
$$
a_Q[e_{k+1}, \dots, e_n] \in \calT_k, \quad \text{for every } e_{k+1} \in X_{k+1}, \dots, e_n \in X_n.
$$
If these conditions are satisfied, we say that $a$ satisfies the \emph{$\UMD$ subspace condition}.

The next theorem generalises the result about boundedness of operator-valued paraproducts from \cite{HH} to the multilinear context.
\begin{thm}\label{thm:MultiPara}
Let  $n \ge 2$, $X_1, \dots, X_n$ be Banach spaces, $Y_{n+1}$ be a $\UMD$ space and $X_{n+1} := Y_{n+1}^*$.
Let $f_m \in L^\infty_c( X_{m})$ for $m=1,\ldots, n+1$.
Suppose $\calD$ is a dyadic grid and $\eta \in (0,1)$.
Then there exists an $\eta$-sparse collection $\mathcal{S}=\mathcal{S}((f_m), \eta)\subset \calD$ so that the following holds.

Suppose $a = (a_Q)_{Q \in \calD}$ satisfies the $\UMD$ subspace condition as above.
For a paraproduct $\pi := \pi_{\calD, a}$ we have for all $r \in (0,\infty)$ that
\begin{equation}\label{SparseForPara}
|\langle \pi(f_1, \ldots, f_n), f_{n+1}\rangle| \lesssim_{\eta} \| a \|_{\BMO_{\calD, r}(\calT_n)}  \sum_{Q\in\mathcal{S}}|Q|\prod_{m=1}^{n+1}\bla |f_m|_{X_m} \bra_Q.
\end{equation}
In particular, if $p_1, \dots, p_n \in (1, \infty]$, $q_{n+1} \in (1/n, \infty)$ and $\sum_{m=1}^n 1/p_m=1/q_{n+1}$,
then
\begin{equation}\label{eq:MultiShiftBdd}
\| \pi(f_1, \dots, f_n) \|_{L^{q_{n+1}}(Y_{n+1})}
\lesssim \| a \|_{\BMO_{\calD, r}(\calT_n)} \prod_{m=1}^n \| f_m\|_{L^{p_m}(X_m)}.
\end{equation}
\end{thm}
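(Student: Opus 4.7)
The plan is to establish the sparse domination \eqref{SparseForPara} directly, from which the $L^p$-bound \eqref{eq:MultiShiftBdd} then follows via \eqref{eq:consequenceofsparse}. Rather than invoking Lemma \ref{lem:SparseModel}, which is tailored to the shift framework carrying two cancellative slots, I would run a stopping-time argument designed for the paraproduct structure, whose cancellation sits only in the output slot (the Haar function $h_Q$). Since the basic outline is parallel to the scalar multilinear case, the main content of the proof will lie in the operator-valued estimation step, where the $\UMD$ subspace condition is essential.

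The sparse family $\calS = \calS((f_m),\eta) \subset \calD$ would be built iteratively via a multilinear Calder\'on--Zygmund stopping rule: for each top cube $S$, the stopping descendants are the maximal $Q \subsetneq S$ on which $\prod_{m=1}^{n+1} \langle |f_m|_{X_m}\rangle_Q$ exceeds a sufficiently large (depending on $\eta$) multiple of the same quantity evaluated at $S$. The $\eta$-sparsity of $\calS$ follows from a weak-type bound for the product of Hardy--Littlewood maximal operators. For $S \in \calS$ let $\calE(S)$ denote the set of cubes $Q \subset S$ whose minimal $\calS$-ancestor is $S$ itself, and split
\[
\pi = \sum_{S \in \calS} \pi_S, \qquad \pi_S := \sum_{Q \in \calE(S)} a_Q[\langle f_1\rangle_Q, \ldots, \langle f_n\rangle_Q] h_Q.
\]
The disjointness of the sparse portions $E_S \subset S$ reduces the sparse bound to the per-tree estimate
\[
|\langle \pi_S, f_{n+1}\rangle| \lesssim \|a\|_{\BMO_{\calD, r}(\calT_n)}\, |S| \prod_{m=1}^{n+1} \langle |f_m|_{X_m}\rangle_S, \qquad S \in \calS.
\]

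To prove this local bound I would introduce independent random signs $\{\varepsilon_Q\}_{Q \in \calE(S)}$ and, using the $\UMD$ property of $\calT_n$ and the John--Nirenberg equivalence \eqref{eq:JN} (to fine-tune the exponent $r$), recast the pairing as a duality between the randomized operator-valued martingale sum $\sum_{Q \in \calE(S)} \varepsilon_Q a_Q \frac{1_Q}{|Q|^{1/2}}$, whose $L^r(\calT_n)$-norm is $\lesssim |S|^{1/r} \|a\|_{\BMO_{\calD, r}(\calT_n)}$ by definition of the BMO norm, and a randomized tensor product involving $f_1, \ldots, f_n, f_{n+1}$. The latter is evaluated slot by slot, chaining the decoupling inequality \eqref{eq:DecEst} and the Kahane--Khintchine inequality \eqref{eq:KK} through the sequence of $\UMD$ spaces $\calT_n, \calT_{n-1}, \ldots, \calT_1$ supplied by the $\UMD$ subspace condition; the stopping rule produces the scalar prefactor $\prod_m \langle |f_m|_{X_m}\rangle_S$.

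The main obstacle will be this iterated operator-valued randomization. At each level $k$, the partially evaluated operator $a_Q[\cdot, \langle f_{k+1}\rangle_Q, \ldots, \langle f_n\rangle_Q]$ must land in a $\UMD$ space in order for the decoupling and martingale estimates to close — this is precisely what the $\UMD$ subspace assumption $a_Q[e_{k+1}, \ldots, e_n] \in \calT_k$ is designed to deliver, but the bookkeeping through $n$ layers is substantially more delicate than the linear operator-valued paraproduct of \cite{HH}, which corresponds to the case $n=1$. Outside this central step, the argument is a routine operator-valued adaptation of the standard sparse/stopping-time approach to multilinear paraproducts.
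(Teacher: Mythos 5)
Your overall architecture (a stopping-time tree decomposition, randomization, iteration through the $\UMD$ subspaces $\calT_k$, the $\BMO$ norm of $(a_Q)$, and John--Nirenberg \eqref{eq:JN} at the end) matches the paper's; the paper packages the reduction to a local estimate through Lemma \ref{lem:SparseModel} --- which, contrary to your reading, is not tied to two cancellative slots and is applied verbatim to the paraproduct --- while you propose to redo that reduction by hand, which is legitimate. However, there are two genuine gaps in the plan as written. First, your stopping rule is on the product $\prod_{m}\langle|f_m|_{X_m}\rangle_Q$, and this does not deliver the step you rely on (``the stopping rule produces the scalar prefactor $\prod_m\langle|f_m|_{X_m}\rangle_S$''). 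The per-tree estimate is obtained by replacing each $f_m$ by its stopped version $f_{m,S}$ (which leaves the relevant averages and Haar coefficients on the tree unchanged, cf.\ \eqref{eq:SameAve}) and then pulling out, slot by slot, the bounds $\|f_{m,S}\|_{L^\infty(X_m)}\lesssim\langle|f_m|_{X_m}\rangle_S$; this $L^\infty$ control is exactly what the individual stopping condition \eqref{eq:Stopping} (stop when some single ratio $\langle|f_m|_{X_m}\rangle_Q/\langle|f_m|_{X_m}\rangle_S$ is large) provides, and what product stopping does not: under your rule one average can be enormous while the product stays small, so \eqref{eq:LInftyProp} fails and the slot-by-slot extraction cannot produce the claimed prefactor. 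The fix is to stop on the maximum of the individual ratios, and for your direct per-tree route you need this for all $n+1$ functions, since the outer slot also requires a stopped, uniformly bounded version of $f_{n+1}$.

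Second, the mechanism you name for the operator-valued local estimate --- decoupling \eqref{eq:DecEst} and Kahane--Khintchine chained through the spaces $\calT_k$ --- is not the right tool here; decoupling plays no role in the paraproduct bound (it is used for the shifts). What makes the iteration close is: (i) the $\UMD$ property of $Y_{n+1}$, which under randomization lets one replace $h_Q$ by $|h_Q|=1_Q/|Q|^{1/2}$, and (ii) Bourgain's $\UMD$-valued Stein inequality, which converts the conditional expectations $\langle g_m\rangle_Q$ back into $g_m(\cdot)$ so that $\|g_m\|_{L^\infty(X_m)}$ can be pulled out, the partially evaluated operators $a_Q[\langle g_{m+1}\rangle_Q,\ldots,\langle g_n\rangle_Q]$ landing in $\calT_m$ by the $\UMD$ subspace condition (which also dictates the order of extraction $\calT_1\to\calT_2\to\cdots\to\calT_n$, since only evaluations in the last slots are assumed to lie in $\UMD$ subspaces); the final term $\E\big\|\sum_Q\varepsilon_Q a_Q|h_Q|\big\|_{L^r(\calT_n)}\lesssim\|a\|_{\BMO_{\calD,r}(\calT_n)}|S|^{1/r}$ is then exactly the $\BMO$ hypothesis, and \eqref{eq:JN} frees the exponent $r$. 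Without Stein's inequality (or an equivalent substitute), decoupling and Kahane--Khintchine alone will not extract the $f_m$'s from inside the randomized sum. With the corrected stopping rule and with Stein's inequality in place of decoupling, your plan becomes essentially the paper's proof of \eqref{eq:InftyTest} and \eqref{eq:bddParBanach}, with the sparse reduction done by hand instead of via Lemma \ref{lem:SparseModel}.
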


\begin{proof}
We first fix $r \in (1,\infty)$ and assume that $\| a \|_{\BMO_{\calD, r}(\calT_n)}= 1$. We will use Lemma \ref{lem:SparseModel} again. First, the correctly normalized
estimate
$$
\Big| \frac{1}{|Q|^{n/2}} a_Q [e_1, \ldots, e_n], e_{n+1}\rangle\Big| \le
\frac{|Q|^{n/2 + 1/2}}{|Q|^n} \prod_{m=1}^{n+1} |e_m|_{X_m}
$$
follows directly from the $\BMO$ assumption.

Choose $p_m \in (1,\infty)$ so that $\sum_{m=1}^n 1/p_m = 1/r$.
We show that
\begin{equation}\label{eq:bddParBanach}
\Big \| \sum_{\substack{Q \in \calD \\Q \subset Q_0}}
a_Q [ \langle F \rangle_Q]h_Q \Big\|_{L^{r}(Y_{n+1})}
 \lesssim  \prod_{m=1}^n \| f_m \|_{L^{p_m}(X_m)},
\end{equation}
where $Q_0 \in \calD$ is arbitrary and $\langle F \rangle_Q := (\langle f_1 \rangle_Q, \ldots, \langle f_n \rangle_Q)$.
We denote by $\calD(Q_0)$ the set of cubes $Q \in \calD$ such that $Q \subset Q_0$.

We begin by constructing a collection of stopping cubes.
Set $\calS_0:= \{Q_0\}$, and suppose that $\calS_0, \dots, \calS_k$ are defined for some $k$.
If $S \in \calS_k$, we define $\ch_\calS(S)$ to be the collection of the maximal cubes $Q \in \calD$ such that
$Q \subset S$ and
\begin{equation}\label{eq:Stopping}
\max\Big( \frac{\langle | f_1|_{X_1} \rangle_Q}{\langle | f_1|_{X_1} \rangle_S}, \dots,
\frac{\langle | f_n|_{X_n} \rangle_Q}{\langle | f_n|_{X_n} \rangle_S}\Big)> 2n.
\end{equation}
Then, we define
$
\calS_{k+1}:= \bigcup_{S \in \calS_k} \ch_\calS(S)
$
and finally
$
\calS:= \bigcup_{k=0}^\infty \calS_k.
$
If $Q \in \calD$ and $Q \subset Q_0$, then the unique minimal cube $S \in \calS$ containing $Q$ is denoted by $\pi_\calS(Q)$.
If $S \in \calS$ we define $E(S):= S \setminus \bigcup_{S' \in \ch_\calS(S)} S'$.

It follows from the construction that $\calS$ is a sparse collection.
For $S \in \calS$ define
$
F_S:=(f_{1,S}, \dots, f_{n,S}),
$
where
\begin{equation}\label{eq:DefBlock}
f_{m,S}:= \sum_{S' \in \ch_\calS(S)} \langle f_m \rangle_{S'}1_{S'} + 1_{E(S)} f_m.
\end{equation}
From the stopping condition \eqref{eq:Stopping} it is deduced that
\begin{equation}\label{eq:LInftyProp}
\| f_{m,S} \|_{L^\infty(X_m)} \lesssim \langle |f_m|_{X_m} \rangle_{S}.
\end{equation}
Equation \eqref{eq:DefBlock} implies that
\begin{equation}\label{eq:SameAve}
\langle F \rangle_Q= \langle F_{S } \rangle_Q
\end{equation}
for all $Q \in \calD(Q_0)$ such that  $\pi_\calS Q=S$.

Pythagoras' theorem \eqref{eq:Pythagoras} and Equation \eqref{eq:SameAve} give that
\begin{equation}\label{eq:ApplyPyth}
\begin{split}
\Big \| \sum_{Q \in \calD(Q_0) } & a_Q [\langle F \rangle_Q] h_Q \Big \|_{L^{r}(Y_{n+1})} \\
& \sim \Big( \sum_{S \in \calS} \Big \|  \sum_{\substack{Q \in \calD(Q_0) \\ \pi_\calS Q =S}}
a_Q [\langle F_S \rangle_Q] h_Q \Big \|_{L^{r}(Y_{n+1})}^{r} \Big)^{1/r}.
\end{split}
\end{equation}
It will be shown that for all $S \in \calS$ there holds that
\begin{equation}\label{eq:InftyTest}
\Big \|  \sum_{\substack{Q \in \calD(Q_0) \\ \pi_\calS Q =S}}
a_Q [\langle g_1 \rangle_Q, \dots, \langle g_n \rangle_Q] h_Q \Big \|_{L^{r}(Y_{n+1})}
\lesssim \prod_{m=1}^n \| g_m \|_{L^\infty(X_m)} | S |^{1/r}
\end{equation}
for all $g_m \in L^\infty(X_m)$, $m \in \{1, \dots, n\}$.
This combined with   \eqref{eq:LInftyProp} and \eqref{eq:ApplyPyth} implies that the left hand side of \eqref{eq:ApplyPyth} satisfies
\begin{equation*}
\begin{split}
LHS \eqref{eq:ApplyPyth}\lesssim  \Big( \sum_{S \in \calS } \prod_{m=1}^n \langle |f_m|_{X_m} \rangle_{S}^{r} | S | \Big)^{1/r}
& \le \prod_{m=1}^n \Big( \sum_{S \in \calS } \langle |f_m|_{X_m} \rangle_{S}^{p_m} | S | \Big)^{1/p_m} \\
& \lesssim \prod_{m=1}^n \| f_m \|_{L^{p_m}(X_m)},
\end{split}
\end{equation*}
where the last step followed from an application of the Carleson embedding theorem based on the sparseness of $\calS$.

Fix $S \in \calS$ and suppose $g_m \in L^\infty(X_m)$, $m \in \{1, \dots, n\}$.  Let $\{\varepsilon_Q\}_{Q \in \calD}$
be a collection of independent random signs.
For all $x \in \R^d$ the collections of random variables $\{\varepsilon_Q h_Q(x)\}_{Q \in \calD}$ and
$\{\varepsilon_Q |h_Q(x)|\}_{Q \in \calD}$ are identically distributed. This gives, using the $\UMD$ property of $Y_{n+1}$, that
\begin{equation}\label{eq:ReadyForStein}
\begin{split}
\Big \| &  \sum_{\substack{Q \in \calD(Q_0) \\ \pi_\calS Q =S}}
a_Q [\langle g_1 \rangle_Q, \dots, \langle g_n \rangle_Q] h_Q \Big \|_{L^{r}(Y_{n+1})} \\
&\sim \E\Big \|   \sum_{\substack{Q \in \calD(Q_0) \\ \pi_\calS Q =S}}
\varepsilon_Qa_Q [\langle g_1 \rangle_Q, \dots, \langle g_n \rangle_Q] |h_Q| \Big \|_{L^{r}(Y_{n+1})}.
\end{split}
\end{equation}
Notice that $|h_Q|=1_Q/|Q|^{1/2}$. This allows us to use Stein's inequality in the next estimate.
The UMD-valued version of Stein's inequality is due to Bourgain, for a proof see e.g. Theorem 4.2.23 in the book \cite{HNVW1}.

The right hand side of \eqref{eq:ReadyForStein}
satisfies
\begin{equation*}
\begin{split}
RHS\eqref{eq:ReadyForStein}
&=\E\Big \|   \sum_{\substack{Q \in \calD(Q_0) \\ \pi_\calS Q =S}}
\varepsilon_Q\Big\langle a_Q [ g_1(\cdot) , \dots, \langle g_n \rangle_Q] \Big\rangle_Q |h_Q| \Big \|_{L^{r}(Y_{n+1})} \\
&\lesssim \E\Big \|   \sum_{\substack{Q \in \calD(Q_0) \\ \pi_\calS Q =S}}
\varepsilon_Q a_Q [ g_1(\cdot) , \dots, \langle g_n \rangle_Q]  |h_Q(\cdot)| \Big \|_{L^{r}(Y_{n+1})} \\
& \le \E\Big \| |g_1|_{X_1}
\Big \|\sum_{\substack{Q \in \calD(Q_0) \\ \pi_\calS Q =S}}
\varepsilon_Q a_Q [ \langle g_2 \rangle_Q, \dots, \langle g_n \rangle_Q]  |h_Q| \Big \|_{\calT_1} \Big \|_{L^{r}} \\
& \le \| g_1 \|_{L^\infty(X_1)}
\E \Big \| \sum_{\substack{Q \in \calD(Q_0) \\ \pi_\calS Q =S}}
\varepsilon_Q a_Q [ \langle g_2 \rangle_Q, \dots, \langle g_n \rangle_Q]  |h_Q|  \Big \|_{L^{r}(\calT_1)}.
\end{split}
\end{equation*}

Because $\calT_1$  is assumed to be a UMD space we can repeat the above estimate with $g_2$ in place of $g_1$.
In course of doing so,  after applying Stein's inequality, we use the estimate
\begin{equation*}
\begin{split}
\Big\|  \sum_{\substack{Q \in \calD(Q_0) \\ \pi_\calS Q =S}}
& \varepsilon_Q a_Q [ g_2 (x), \langle g_3 \rangle_Q, \dots, \langle g_n \rangle_Q]  |h_Q(x)| \Big \|_{\calT_1} \\
& \le| g_2(x)|_{X_2} \Big \| \sum_{\substack{Q \in \calD(Q_0) \\ \pi_\calS Q =S}}
\varepsilon_Q a_Q [ \langle g_3 \rangle_Q, \dots, \langle g_n \rangle_Q]  |h_Q(x)| \Big \|_{\calT_2},
\end{split}
\end{equation*}
see \eqref{eq:Iteration1} and \eqref{eq:Iteration2}.
Iterating this we arrive at
\begin{equation*}
RHS\eqref{eq:ReadyForStein}
\lesssim \prod_{m=1}^n \| g_m \|_{L^\infty(X_m)}
\E \Big \| \sum_{\substack{Q \in \calD(Q_0) \\ \pi_\calS Q =S}}
\varepsilon_Q a_Q  |h_Q|  \Big \|_{L^{r}(\calT_n)}.
\end{equation*}
Finally, the  $\BMO$ assumption gives that
$$
\E \Big \| \sum_{\substack{Q \in \calD(Q_0) \\ \pi_\calS Q =S}}
\varepsilon_Q a_Q  |h_Q|  \Big \|_{L^{r}(\calT_n)}
\lesssim  \| a \|_{\BMO_{\calD, r}(\calT_n)} |S|^{1/{r}} = |S|^{1/{r}}.
$$
This concludes the proof of \eqref{eq:InftyTest}, and hence of \eqref{eq:bddParBanach}.
Lemma \ref{lem:SparseModel} now gives \eqref{SparseForPara}. It remains to recall the John--Nirenberg inequality \eqref{eq:JN}.
\end{proof}

\section{The representation theorem}\label{sec:RepThm}

Let $n \ge 2$ and let $X_1, \dots, X_n,Y_{n+1}$ be $\UMD$-spaces. With respect to these spaces, suppose that
$T$ is an $n$-linear operator-valued SIO with a basic kernel $K$.
In addition, $T^{m*}$ denotes the $m$th adjoint of $T$ for $m \in \{1,\ldots,n\}$ -- that is, there are $n$-linear operator-valued SIOs  $T^{m*}$ satisfying the identity
$$
\langle T(f_1, \dots, f_n), f_{n+1} \rangle
= \langle T^{m*}(f_1, \dots, f_{m-1}, f_{n+1}, f_{m+1}, \dots, f_n), f_m \rangle.
$$

Assume that there exist $\UMD$ subspaces $\calT_k \subset \calL(\prod_{m=1}^kX_m,Y_{n+1})$
so that for all dyadic lattices $\calD$ the sequence $[T1]_{\calD} := (\langle T1, h_Q\rangle)_{Q \in \calD}$
satisfies the $\UMD$ subspace condition
(see Section \ref{sec:ParaBdd}) with these spaces. Recall that the operators $\langle T1, h_Q\rangle$ were defined in Section
\ref{sec:BMOT1}.
We abbreviate this by saying that $T1$ satisfies the $\UMD$ subspace condition.
Let $m \in \{1, \dots, n\}$. Likewise, we say that $T^{m*}1$ satisfies the $\UMD$ subspace condition
if the corresponding $\UMD$ subspaces $\calT_k^m$ exist. Notice that here the spaces $X_m$ change places,
so that for instance
$$
\calT_n^m \subset
\calL\Big(\prod_{k=1}^{m-1} X_k \times X_{n+1} \times \prod_{k=m+1}^{n} X_k , X_m^*\Big).
$$

If $\phi_m \colon \R^d \to \C$, $m = 1, \ldots, n+1$, are bounded and compactly supported and $\Phi = (\phi_1, \ldots, \phi_n)$, then
we can define the $n$-linear operator
$$
\langle T\Phi, \phi_{n+1}\rangle \colon \prod_{m=1}^n X_m \to Y_{n+1}
$$
by setting
\begin{equation}\label{eq:defnTpair}
\langle T \Phi, \phi_{n+1} \rangle [e_1, \dots, e_n]
:=\big \langle T(\phi_1e_1, \dots, \phi_ne_n), \phi_{n+1} \big \rangle, \qquad e_m \in X_m.
\end{equation}
We define the following collection of $n$-linear operators $\prod_{m=1}^n X_m \to Y_{n+1}$ by setting
$$
\calC_{\weak}(T) = \{ |Q|^{-1}\langle T(1_Q, \ldots, 1_Q), 1_Q \rangle \colon Q \subset \R^d \textup{ is a cube}\}.
$$
\begin{defn}\label{defn:T1Cond}
Let $n \ge 2$, $X_1, \dots, X_n,Y_{n+1}$ be $\UMD$ spaces and $X_{n+1}=Y_{n+1}^*$.
With respect to these spaces, suppose that
$T$ is an $n$-linear operator-valued SIO with a basic kernel $K$.
Suppose that $ \varpi \colon X_1\times  \dots \times X_{n+1} \to \C$ is an $n+1$-linear contraction  as in \eqref{eq:Contr}.
We say that $T$ satisfies $T1$ type testing conditions if:
\begin{enumerate}
\item We have
$$
\|K\|_{\CZ_{\alpha}, \varpi} := \calR_{\varpi}(\calC_{\CZ,\alpha}(K)) < \infty.
$$
\item We have
$$
\|T\|_{\WBP, \varpi} :=  \calR_{\varpi}(\calC_{\weak}(T)) < \infty.
$$
\item For all $m \in \{0,1, \dots, n\}$ the $\UMD$ subspace condition for $T^{m*}1$ holds, and
there exist exponents $r_0,r_1, \dots, r_n \in (0, \infty)$
so that
$$
\| T^{m*}1\|_{\BMO_{r_m}(\calT^m_n)}
:=\sup_{\calD} \| (\langle T^{m*}1, h_Q\rangle)_{Q \in \calD} \|_{\BMO_{\calD,r_m}(\calT^m_n)}<\infty,
$$
where the supremum is over dyadic lattices on $\R^d$.
\end{enumerate}
\end{defn}

\subsubsection*{Good and bad cubes}
Recall the random dyadic lattices from Section \ref{sec:randomlattice}.
We introduce the good and bad dyadic cubes of Nazarov--Treil--Volberg \cite{NTV}.
Let $\gamma \in (0,1)$ and $r \in \{1,2,3,\dots\}$.
We say that a cube $Q \in \calD$, where $\calD$ is a dyadic lattice, is $(\gamma,r)$-\emph{good} if for all $R \in \calD$ with $\ell(R) \ge 2^r \ell(Q)$ there holds that
$$
d(Q,\partial R) > \ell(Q)^\gamma \ell(R)^{1-\gamma},
$$
where $\partial R$ is the boundary of $R$. If $Q$ is not $(\gamma,r)$-good, we say that it is $(\gamma,r)$-\emph{bad}.
Let $Q \in \calD_0$ (where $\calD_0$ is the standard dyadic grid) and define the probability
$$
\calP_{\bad}(\gamma,r)=\bbP(\{ \omega \in \Omega \colon Q+\omega \text{ is } (\gamma,r)\text{-bad}\}).
$$
This probability is independent of the cube $Q \in \calD_0$ and $\calP_{\bad}(\gamma,r) \to 0$, as $r \to \infty$.
In what follows we make the explicit choice $\gamma(dn+\alpha)=\alpha/2$ and then fix $r$ large enough -- at least
so large that $\calP_{\good} = \calP_{\good}(\gamma, r) = 1 - \calP_{\bad}(\gamma, r) > 0$, and that certain calculations
below are legitimate. Now that $\gamma$ and $r$ are fixed we simply write $\calD_{\good}$ and $\calD_{\bad}$ for the good and bad cubes of a given lattice $\calD$.

\begin{thm}\label{thm:Rep}
Let $n \ge 2$ and let  $X_1, \dots, X_n, Y_{n+1}$ be $\UMD$ spaces. Denote $X_{n+1}=Y_{n+1}^*$.
Suppose that $(X_1, \dots, X_{n+1})$ satisfies the $\RMF_{\varpi}$ property with some $\varpi$,
as in Section \ref{sec:RMF}.
With respect to these spaces, suppose that
$T$ is an $n$-linear operator-valued SIO with a basic kernel $K$
as in Section \ref{sec:MultiSing}. Suppose that $T$ satisfies the $T1$ type testing conditions of Definition \ref{defn:T1Cond}.

Let $f_m\colon \R^d \to X_{m}$, $m=1,\ldots, n+1$, be compactly supported and bounded functions. Then we have the representation
\begin{equation*}
\begin{split}
\langle &T(f_1,  \dots, f_n),f_{n+1} \rangle \\
&= C\Big[\|K\|_{\CZ_{\alpha}, \varpi} + \|T\|_{\WBP, \varpi} + \sum_{m=0}^n \|T^{m*}1\|_{\BMO_{r_m}(\calT^m_n)}\Big] \\
 &\times \bigg[  \E_\omega \sum_{k \in \Z^{n+1}_+} 2^{-\alpha\max k_m/2} \sum_u
\langle S^k_{\omega,u} (f_1, \dots, f_n),f_{n+1} \rangle
+\sum_{m=0}^n \langle \pi^{m*}_{\omega, T^{m*}1} (f_1, \dots, f_n),f_{n+1} \rangle \bigg].
\end{split}
\end{equation*}
Here  $C \lesssim 1$ and the sum over $u$ is a finite summation.
The operators $S^k_{\omega,u}$ are dyadic shifts of complexity $k$ defined in the grid $\calD_{\omega}$ and satisfy
$\calR_{\varpi}(\calC(S^k_{\omega,u})) \lesssim 1$.
The operator $\pi_{\omega, T^{m*}1}$ is the paraproduct $\pi_{\calD_{\omega}, [T^{m*}1]_{\calD_{\omega, \good}}}$
related to $[T^{m*}1]_{\calD_{\omega, \good}}$.
\end{thm}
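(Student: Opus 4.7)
The plan is to follow the classical dyadic representation strategy of Hyt\"onen, as adapted to operator-valued settings by H\"anninen--Hyt\"onen and to bilinear scalar settings by Li--Martikainen--Y.~Ou--Vuorinen, extending the argument to the present multilinear operator-valued context. The first step is, for each $\omega \in \Omega$, to expand each function in the Haar basis adapted to $\calD_\omega$ and write
\[
\langle T(f_1,\ldots,f_n), f_{n+1}\rangle = \sum_{Q_1,\ldots,Q_{n+1}\in\calD_\omega} \langle T(h_{Q_1},\ldots,h_{Q_n}), h_{Q_{n+1}}\rangle \prod_{m=1}^{n+1}\langle f_m,h_{Q_m}\rangle,
\]
where the suppressed signatures $\eta_m \in \{0,1\}^d$ contribute only finitely many terms per slot. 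Following Nazarov--Treil--Volberg, I would then insert the indicator of the smallest cube in the tuple being good, average over $\omega$, and exploit the independence of the goodness event from the Haar pairings to absorb the bad part via the standard $\calP_{\good}^{-1}$ trick.

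Next, I would partition the good part of the sum by the common dyadic ancestor $K\in \calD_\omega$ of the tuple and by the complexity $k=(k_1,\ldots,k_{n+1})$ with $Q_m^{(k_m)}=K$. For fixed $k$ and $K$, the tuples fall into two regimes. In the \emph{separated} regime where at least two cubes are distinguishably arranged, the pairing $\langle T(h_{Q_1},\ldots,h_{Q_n}), h_{Q_{n+1}}\rangle$ is controlled by the H\"older smoothness of $K$ via \eqref{eq:Hol}; the goodness hypothesis $\gamma(dn+\alpha)=\alpha/2$ converts the distance gain into the factor $2^{-\alpha\max k_m/2}$. In the \emph{collapsed} regime where the cubes overlap at comparable scale, the weak boundedness family $\calC_{\weak}(T)$ controls the pairing. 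In both subcases, after fixing a pair $(j_0,j_1)$ of cancellative slots (a finite combinatorial choice, indexed by $u$), summing over all tuples of fixed $k$ produces a dyadic shift $S^k_{\omega,u}$ of the prescribed form. The critical point is to verify $\calR_\varpi$-boundedness of the normalized coefficient family $\calC(S^k_{\omega,u})$: each normalized coefficient $|K|^n a_{K,(Q_i)}/\prod |Q_m|^{1/2}$ is represented as an integral average of either elements of $\calC_{\CZ,\alpha}(K)$ or of $\calC_{\weak}(T)$, so Lemma \ref{lem:AveRbdd} gives
\[
\calR_\varpi(\calC(S^k_{\omega,u})) \lesssim \|K\|_{\CZ_\alpha,\varpi} + \|T\|_{\WBP,\varpi}.
\]

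The remaining contributions are the \emph{nested} tuples in which some $Q_{m_0}$ strictly contains all other cubes. For these, $h_{Q_{m_0}}$ is essentially constant, equal to $\pm |Q_{m_0}|^{-1/2}$, on the supports of the other Haar functions, so that the pairing factors through $\langle T^{m_0*}1, h_{Q_{m_0}}\rangle$ acting on the averages of the remaining $f_m$ over $Q_{m_0}$; compare with the definition \eqref{eq:DefGlobalPairing}. Collecting these contributions for fixed $m_0 \in \{0,1,\ldots,n\}$ (where $T^{0*}:=T$) produces exactly the paraproduct $\pi^{m_0*}_{\omega, T^{m_0*}1}$ based on the good ancestor sequence $[T^{m_0*}1]_{\calD_{\omega,\good}}$, with coefficients lying in the assumed $\UMD$ subspaces $\calT_n^{m_0}$.

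The main technical obstacle is the uniform-in-$k$ bound on $\calR_\varpi(\calC(S^k_{\omega,u}))$: the $\calR_\varpi$ notion depends on the choice of tight admissible partition of $\calJ_n$ (cf. Definition~\ref{def:Rvarpi}) and is substantially more delicate than classical $\calR$-boundedness. One must verify that the averaging integrals over the cubes $K^{n+1}$ representing the coefficients via \eqref{eq:Hol} are compatible with the tight admissible partition structure and produce absolutely convergent integrands; Lemma \ref{lem:AveRbdd} then transfers the $\calR_\varpi$-bound from the raw kernel family to the averaged coefficients without loss. A secondary bookkeeping point is ensuring that the reorganization of the nested tuples exhausts all non-shift contributions and distributes them precisely among the $n+1$ possible ``large slot'' positions $m_0\in \{0,\ldots,n\}$; this is standard but notation-intensive in the multilinear setting.
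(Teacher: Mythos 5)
Your outline stumbles at the very first step: the reduction to good cubes, as you describe it, does not work. After the full $(n+1)$-fold Haar expansion, the summand attached to a fixed tuple $(Q_1,\ldots,Q_{n+1})$ is \emph{not} independent of the goodness of the smallest cube. Indeed, a cube $Q+\omega$ is determined by the coordinates $\omega_k$ with $2^{-k}<\ell(Q)$, while the goodness of the smallest cube $Q_{m_0}+\omega$ is determined by the coordinates $\omega_k$ with $2^{-k}\ge\ell(Q_{m_0})$; the Haar coefficients $\langle f_m, h_{Q_m+\omega}\rangle$ and the kernel pairing involving the strictly larger cubes $Q_m$ depend on exactly those intermediate coordinates $\omega_k$ with $\ell(Q_{m_0})\le 2^{-k}<\ell(Q_m)$, so $\E_\omega\big[1_{\good}(Q_{m_0}+\omega)\,(\cdot)\big]\neq \calP_{\good}\,\E_\omega[(\cdot)]$ term by term, and the $\calP_{\good}^{-1}$ identity you invoke is false. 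The paper circumvents this by first collapsing the sums over the strictly larger cubes into the averaging operators $E_{\omega,\ell(Q)}$ and $E_{\omega,\ell(Q)/2}$, producing the quantities $\Lambda_m(Q)$ and $\tilde{\Lambda}_m(Q)$ which depend on $\omega$ only through scales finer than $\ell(Q)$; goodness of the single smallest cube is inserted at that stage, where independence genuinely holds, and only afterwards are the averages re-expanded into martingale differences (the steps ``expansion back to martingale differences'' through Step V). Without this collapse your probabilistic reduction fails, and this is precisely the known delicate point of all representation-theorem proofs.

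There are secondary gaps as well. The infinite expansions and the telescoping in scale (writing $\langle T(E_{\ell(Q)}f_1,\ldots,E_{\ell(Q)}f_n),\Delta_Q f_{n+1}\rangle$ as a convergent sum over $2^k\ge\ell(Q)$) need justification; the paper first works under an a priori bound on $T$, using $T(E_{2^k}f_1,\ldots,E_{2^k}f_n)\to 0$, and removes it by a finite-setup reduction (Step VI), which your proposal does not address. Your ``collapsed regime'' also conflates two different mechanisms: the weak boundedness family $\calC_{\weak}(T)$ only controls the exactly diagonal coefficients where all cubes are restricted to the same child, whereas adjacent-but-disjoint configurations are handled by the kernel size condition, not by WBP. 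Finally, extracting the paraproducts is not a mere ``factoring through $\langle T^{m*}1,h_Q\rangle$'': one must add and subtract $T1$ via \eqref{eq:DefGlobalPairing}, control the resulting error terms $\langle T\Phi_{Q,k,i,j},h_Q\rangle$ as shifts with geometric decay (using goodness of $Q$ when $k>r$ to kill the near part and to gain $2^{-\alpha k/2}$), and then telescope in $k$ and sum over the positions $i$ to produce a single paraproduct per adjoint; this error-term analysis genuinely uses the kernel estimates and is missing from your sketch.
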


Using Theorem \ref{thm:MultiShifts}, Theorem \ref{thm:MultiPara} and \eqref{eq:consequenceofsparse} we get:
\begin{thm} \label{cor:main}
There exist dyadic grids $\mathcal{D}_i$, $i = 1, \ldots, 3^d$, with the following property. Let $\eta \in (0,1)$.
Let $X_1, \dots, X_n$ and $Y_{n+1}$ be Banach spaces and $f_m\colon \R^d \to X_{m}$, $m=1,\ldots, n+1$, be compactly supported and bounded functions. Then for some $i$ there exists an $\eta$-sparse collection $\calS = \calS((f_m), \eta)\subset \calD_i$ with the following property.

If $T$ is an operator-valued $n$-linear singular integral satisfying the $T1$ type testing conditions, then
\begin{equation}\label{SparseForT}
\begin{split}
|\langle T(f_1, \ldots, f_n), f_{n+1}\rangle| \lesssim_{\eta}
\Big[\|K\|_{\CZ_{\alpha}, \varpi} + \|T\|_{\WBP, \varpi} + \sum_{m=0}^n& \|T^{m*}1\|_{\BMO_{r_m}(\calT^m_n)}\Big] \\
&\times \sum_{Q\in\mathcal{S}}|Q|\prod_{m=1}^{n+1}\bla |f_m|_{X_m} \bra_Q.
\end{split}
\end{equation}
In particular, if $p_1, \dots, p_n \in (1, \infty]$, $q_{n+1} \in (1/n, \infty)$ and $\sum_{m=1}^n 1/p_m=1/q_{n+1}$,
then
\begin{equation}\label{eq:TBdd}
\begin{split}
\| T(f_1, \dots, f_n) \|_{L^{q_{n+1}}(Y_{n+1})}
\lesssim \Big[\|K\|_{\CZ_{\alpha}, \varpi} + \|T\|_{\WBP, \varpi} + \sum_{m=0}^n \|T^{m*}&1\|_{\BMO_{r_m}(\calT^m_n)}\Big] \\
&\times \prod_{m=1}^n \| f_m\|_{L^{p_m}(X_m)}.
\end{split}
\end{equation}

\end{thm}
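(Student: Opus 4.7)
The plan is to assemble the machinery already developed: the representation theorem reduces $T$ to a weighted expectation of dyadic model operators, the sparse bounds of Sections \ref{sec:ShiftBdd} and \ref{sec:ParaBdd} dominate those models, and a standard grid-consolidation principle converts everything to one of the deterministic grids $\calD_i$. Once \eqref{SparseForT} is in hand, \eqref{eq:TBdd} is automatic via \eqref{eq:consequenceofsparse}.

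First, I would invoke Theorem \ref{thm:Rep} to express $\langle T(f_1,\ldots,f_n),f_{n+1}\rangle$ as $B$ times an expectation over random dyadic grids $\calD_\omega$ of (i) shifts $S^k_{\omega,u}$ of complexity $k$, weighted by $2^{-\alpha\max k_m/2}$, and (ii) finitely many paraproducts $\pi^{m*}_{\omega,T^{m*}1}$, where
$$
B := \|K\|_{\CZ_{\alpha},\varpi}+\|T\|_{\WBP,\varpi}+\sum_{m=0}^n \|T^{m*}1\|_{\BMO_{r_m}(\calT^m_n)}.
$$
The representation furnishes $\calR_\varpi(\calC(S^k_{\omega,u}))\lesssim 1$ uniformly, and the paraproduct coefficients satisfy the $\UMD$ subspace condition with $\BMO$ norm controlled by $B$; these are exactly the hypotheses required by Theorems \ref{thm:MultiShifts} and \ref{thm:MultiPara} respectively. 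Since both of those sparse bounds rest on Lemma \ref{lem:SparseModel}, which provides a single $\eta$-sparse collection $\calS_\omega\subset \calD_\omega$ depending only on $(f_m)$ and $\eta$ (and \emph{not} on the individual model operator), the same collection $\calS_\omega$ simultaneously dominates every dyadic model appearing in the representation -- a uniformity feature crucial for obtaining $T$-independence of $\calS$ in the final statement.

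Summing the resulting sparse bounds against the weights from the representation, the polynomial factor $(1+\max k_m)^{n-1}$ coming from the shift sparse constant is beaten by the exponential decay, yielding
$$
\sum_{k\in\Z^{n+1}_+} 2^{-\alpha\max k_m/2}(1+\max k_m)^{n-1} \lesssim 1,
$$
so the series converges. Selecting a favorable realization $\omega$ (one for which the expectation does not exceed a constant times the deterministic value) produces an $\omega$-dependent sparse bound over $\calD_\omega$ with total constant $\lesssim B$. To transfer this to one of the fixed grids $\calD_i$, I invoke the adjacent-grid principle: every cube $Q\in\calD_\omega$ is contained in a cube of some $\calD_i$ with sidelength at most $3\ell(Q)$. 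Replacing each $Q\in\calS_\omega$ by its ancestor in $\calD_i$ and pigeonholing on $i$ yields the desired $\eta$-sparse collection in a single $\calD_i$, at the cost of an absolute constant that can be absorbed by slightly adjusting $\eta$ upstream. This establishes \eqref{SparseForT}, and \eqref{eq:TBdd} then follows immediately from \eqref{eq:consequenceofsparse}.

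The deepest work has already been carried out in Theorems \ref{thm:Rep}, \ref{thm:MultiShifts}, \ref{thm:MultiPara}, and in the construction of the multilinear $\calR_\varpi$--framework. The only remaining technical subtlety is the adjacent-grid consolidation in the presence of both randomness and the required $T$-independence, but this manipulation is by now routine in sparse-domination theory; the authors point out that the precise form of the reduction has been carried out in their earlier paper \cite{DLMV1}, and the reference can be cited verbatim here.
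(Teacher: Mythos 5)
Your argument is correct and is essentially the paper's own (one-line) proof: Theorem \ref{cor:main} is obtained by feeding the representation of Theorem \ref{thm:Rep} into the sparse bounds of Theorems \ref{thm:MultiShifts} and \ref{thm:MultiPara}, whose sparse collection (via Lemma \ref{lem:SparseModel}) depends only on $(f_m)$, $\eta$ and the lattice -- precisely the $T$-independence you emphasize -- summing in $k$ against the decay $2^{-\alpha \max k_m/2}$, passing to a fixed grid by the standard $3^d$ adjacent-grid reduction, and then invoking \eqref{eq:consequenceofsparse} for the $L^p$ conclusion. This matches the intended argument, so no further comment is needed.
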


\begin{proof}[Proof of Theorem \ref{thm:Rep}]
We show the proof under the additional assumption that $T$
is a priori bounded, say from $\prod_{m=1}^n L^{n+1}(X_m)$ to $L^{(n+1)/n}(Y_{n+1})$.
At the end we comment why this is enough. With this assumption, all the steps in this subsection can be made rigorous.

For every $m \in \{1, \dots, n+1\}$ suppose $f_m$ is a $X_m$-valued, bounded and compactly supported strongly measurable function.
Write $F =(f_1, \dots, f_n)$. We start considering the pairing $\langle T[F], f_{n+1}\rangle$.

\subsection*{Multilinear reduction to good cubes}
Fix for the moment a random parameter $\omega \in \Omega$.
Writing the functions as $f_m= \sum_{Q_m \in \calD_{\omega}}\Delta_{Q_m} f_m$ we have
\begin{equation*}
\begin{split}
 \langle T(f_1, & \dots, f_n), f_{n+1} \rangle \\
& = \sum_{Q_1, \dots, Q_{n+1} \in \calD_{\omega}}
 \big\langle T(\Delta_{Q_1}f_1, \dots, \Delta_{Q_n}f_n), \Delta_{Q_{n+1}}f_{n+1} \big\rangle \\
 & =\sum_{m=1}^{n+1}
 \sum_{\substack{Q_1, \dots, Q_{n+1} \in \calD_{\omega} \\ \ell(Q_1), \dots,\ell(Q_{m-1})> \ell(Q_m) \\ \ell(Q_{m+1}), \dots, \ell(Q_{n+1}) \ge \ell(Q_m)}}
 \big \langle T(\Delta_{Q_1}f_1, \dots, \Delta_{Q_n}f_n), \Delta_{Q_{n+1}}f_{n+1} \big\rangle.
 \end{split}
 \end{equation*}
For $k \in \Z$ there holds that
 $$
 \sum_{\substack{Q \in \calD_{\omega} \\ \ell(Q)> 2^k}} \Delta_Q g
 =E_{\omega, 2^k} g, \qquad E_{\omega, 2^k}g := \sum_{\substack{Q \in \calD_{\omega} \\ \ell(Q)=2^k}} \langle g \rangle_Q 1_Q.
 $$
Thus, for $m \in \{1, \dots,n\}$ we have
\begin{equation*}
\begin{split}
& \sum_{\substack{Q_1, \dots, Q_{n+1} \in \calD_{\omega} \\ \ell(Q_1), \dots,\ell(Q_{m-1})> \ell(Q_m) \\ \ell(Q_{m+1}), \dots, \ell(Q_{n+1}) \ge \ell(Q_m)}}
 \big \langle T(\Delta_{Q_1}f_1, \dots, \Delta_{Q_n}f_n), \Delta_{Q_{n+1}}f_{n+1} \big\rangle
= \sum_{Q \in \calD_{\omega}} \tilde{\Lambda}_m(Q),
\end{split}
\end{equation*}
where $\tilde{\Lambda}_m(Q)$ is defined, for $Q \in \calD_{\omega}$, to be
\begin{equation*}
\begin{split}
\Big\langle T^{m*}\big(E_{\omega, \ell(Q)}f_1, \dots, E_{\omega, \ell(Q)} f_{m-1},
 E_{\omega,\frac{\ell(Q)}{2}} f_{n+1}, E_{\omega,\frac{\ell(Q)}{2}} f_{m+1}, \dots, E_{\omega,\frac{\ell(Q)}{2}} f_{n} \big), \Delta_{Q}f_{m}\Big\rangle.
\end{split}
 \end{equation*}
Similarly, for $m=n+1$ we have
 \begin{equation*}
\sum_{\substack{Q_1, \dots, Q_{n+1} \in \calD_{\omega} \\ \ell(Q_1), \dots,\ell(Q_{n})> \ell(Q_{n+1})}}
 \big \langle T(\Delta_{Q_1}f_1, \dots, \Delta_{Q_n}f_n), \Delta_{Q_{n+1}}f_{n+1} \big\rangle
  =: \sum_{Q \in \calD_{\omega}}  \Lambda_{n+1}(Q),
\end{equation*}
where
\begin{equation}\label{eq:Lambdan}
\Lambda_{n+1}(Q)
:=\big\langle T\big(E_{\omega, \ell(Q)}f_1, \dots, E_{\omega,\ell(Q)} f_{n}\big), \Delta_{Q}f_{n+1} \big\rangle, \qquad Q \in \calD_{\omega}.
\end{equation}

Recall that $\calD_{\omega}$ is the lattice $\{Q + \omega \colon Q \in \calD_0\}$.
Hence, the average over $\omega \in \Omega$ of $\sum_{Q \in \calD_{\omega}}  \Lambda_{n+1}(Q)$ can be written as
\begin{equation*}
\begin{split}
\E_\omega \sum_{Q \in \calD_0}  \Lambda_{n+1}(Q+ \omega)
&= \frac{1}{\calP_{\good}}\sum_{Q \in \calD_0} \E_\omega [1_{\good} (Q + \omega)]
\E_\omega \Lambda_{n+1}(Q+ \omega) \\
&=\frac{1}{\calP_{\good}}\E_\omega \sum_{Q \in \calD_0} 1_{\good} (Q + \omega)
\Lambda_{n+1}(Q+ \omega) \\
&= \frac{1}{\calP_{\good}}\E_\omega \sum_{Q \in \calD_{\omega, \good}}  \Lambda_{n+1}(Q),
\end{split}
\end{equation*}
where independence of the functions
$
\omega \mapsto 1_{\good} (Q + \omega)
$
and
$
\omega \mapsto \Lambda_{n+1}(Q + \omega)
$
was used in the second identity. Since the same argument can be clearly made for every $ \sum_{Q \in \calD_{\omega}} \tilde{\Lambda}_m(Q)$, we have shown that
\begin{equation}\label{eq:MainSymTogether}
\langle T[F], f_{n+1} \rangle
= \frac{1}{\calP_{\good}}\E_\omega \Big[\sum_{m=1}^{n} \sum_{Q \in \calD_{\omega, \good}} \tilde{\Lambda}_m(Q)
+\sum_{Q \in \calD_{\omega, \good}} \Lambda_{n+1}(Q)\Big].
\end{equation}

\subsection*{Expansion back to martingale differences}\label{sec:BackToDifferences}
Now that the probabilistic reduction is done, we fix one $\omega \in \Omega$ and suppress $\omega$ from the notation;
all the dyadic concepts are with respect to the lattice $\calD := \calD_{\omega}$.
Let first $m \in \{1, \dots, n\}$ and $Q \in \calD$, and consider the pairing
$\tilde{\Lambda}_m(Q)$. Just for notational  convenience define for the moment
\begin{equation}\label{eq:ReIndexed}
g_j^m:=
\begin{cases}
f_j, \quad j \in \{1, \dots, n\} \setminus \{m\}, \\
f_{n+1}, \quad j=m, \\
f_m, \quad j=n+1.
\end{cases}
\end{equation}
By writing $E_{\frac{\ell(Q)}{2}} g_{m}^m =D_{\ell(Q)} g^m_{m}+E_{\ell(Q)} g_{m}^m$, where $D_{2^k}g = \sum_{Q\colon \ell(Q) = 2^k} \Delta_Q g$,
we have
\begin{equation*}
\begin{split}
\tilde{\Lambda}_m(Q)
&=\Big\langle T^{m*}\big(E_{\ell(Q)}g^m_1, \dots, E_{\ell(Q)} g^m_{m-1}, D_{\ell(Q)} g^m_{m}, E_{\frac{\ell(Q)}{2}} g^m_{m+1}, \dots, E_{\frac{\ell(Q)}{2}} g^m_{n} \big), \Delta_{Q}g_{n+1}^m\Big\rangle \\
&+\Big\langle T^{m*}\big(E_{\ell(Q)}g_1^m, \dots, E_{\ell(Q)} g_{m}^m, E_{\frac{\ell(Q)}{2}} g_{m+1}^m, \dots, E_{\frac{\ell(Q)}{2}} g_{n}^m \big), \Delta_{Q}g_{n+1}^m\Big\rangle.
\end{split}
\end{equation*}
Continuing in the same way with the second term on the right hand side,
it is seen that
\begin{equation*}
\begin{split}
\tilde{\Lambda}_m(Q)
&=\sum_{j=m}^n
\Big\langle T^{m*}\big(E_{\ell(Q)}g^m_1, \dots, E_{\ell(Q)} g^m_{j-1}, D_{\ell(Q)} g^m_{j}, E_{\frac{\ell(Q)}{2}} g^m_{j+1}, \dots, E_{\frac{\ell(Q)}{2}} g^m_{n} \big), \Delta_{Q}g_{n+1}^m\Big\rangle \\
& +\Lambda_m(Q),
\end{split}
\end{equation*}
where we have defined
\begin{equation}\label{eq:OtherMainSym}
\Lambda_m(Q)
:= \Big\langle T^{m*}\big(E_{\ell(Q)}g^m_1, \dots, E_{\ell(Q)} g^m_{n}\big), \Delta_{Q}g_{n+1}^m\Big\rangle.
\end{equation}

The terms $\Lambda_m(Q)$, $m \in \{1, \dots, n\}$, are completely symmetric with the term $\Lambda_{n+1}(Q)$ in \eqref{eq:Lambdan}.
Hence,  we will concentrate on finding the model operator structure for the sum
\begin{equation}\label{eq:MainTerm}
\sum_{Q \in \calD_{\good}} \Lambda_{n+1}(Q).
\end{equation}

The terms
\begin{equation}\label{eq:ContainsDiag}
\sum_{Q \in \calD_{\good}}
\Big\langle T^{m*}\big(E_{\ell(Q)}g^m_1, \dots, E_{\ell(Q)} g^m_{j-1}, D_{\ell(Q)} g^m_{j}, E_{\frac{\ell(Q)}{2}} g^m_{j+1}, \dots, E_{\frac{\ell(Q)}{2}} g^m_{n} \big), \Delta_{Q}g_{n+1}^m\Big\rangle,
\end{equation}
where $m \in \{1, \dots, n\}$ and $j \in \{m, \dots, n\}$, will be handled separately in Section \ref{sec:Diagonal}. Apart from a certain diagonal part,
the shift structure for these will follow from our arguments concerning \eqref{eq:MainTerm}.

Now we start to consider the term \eqref{eq:MainTerm}.
Fix the cube $Q \in \calD_{\good}$ for the moment.
Since by a priori boundedness there holds
$$
T\big(E_{2^k}f_1, \dots, E_{2^k} f_{n}\big) \to 0, \quad \text{ as } k \to \infty,
$$
we have
\begin{equation*}
\begin{split}
\big\langle T\big(E_{\ell(Q)}f_1, \dots, E_{\ell(Q)} f_{n}\big), \Delta_{Q}f_{n+1} \big\rangle
= \sum_{\substack{k \in \Z \\ 2^k \ge \ell(Q)}}&\Big[\big\langle T\big(E_{2^k}f_1, \dots, E_{2^k} f_{n}\big), \Delta_{Q}f_{n+1} \big\rangle \\
&-\big\langle T\big(E_{2^{k+1}}f_1, \dots, E_{2^{k+1}} f_{n}\big), \Delta_{Q}f_{n+1} \big\rangle\Big].
\end{split}
\end{equation*}
Let $k \in \Z$ be such that $2^k \ge \ell(Q)$. Then
\begin{equation*}
\begin{split}
\big\langle & T\big(E_{2^k}f_1, \dots, E_{2^k} f_{n}\big), \Delta_{Q}f_{n+1} \big\rangle
-\big\langle T\big(E_{2^{k+1}}f_1, \dots, E_{2^{k+1}} f_{n}\big), \Delta_{Q}f_{n+1} \big\rangle \\
& = \sum_{i=1}^n \big\langle T\big(E_{2^{k+1}}f_1, \dots, E_{2^{k+1}} f_{i-1},D_{2^{k+1}} f_{i},E_{2^{k}} f_{i+1}, \dots, E_{2^{k}} f_{n}\big), \Delta_{Q}f_{n+1} \big\rangle.
\end{split}
\end{equation*}
For $i \in \{1, \dots, n\}$ the corresponding term in this sum can further be written as
$$
\mathop{\mathop{\sum_{Q_1, \dots,Q_n \in \calD}}_{\ell(Q_1) = \cdots = \ell(Q_i) = 2^{k+1}}}_{\ell(Q_{i+1}) = \cdots = \ell(Q_{n}) = 2^k}
\big\langle T\big(E_{Q_1}f_1, \dots, E_{Q_{i-1}} f_{i-1},\Delta_{Q_i} f_{i},E_{Q_{i+1}} f_{i+1}, \dots, E_{Q_n} f_{n}\big), \Delta_{Q}f_{n+1} \big\rangle.
$$

Let us agree on the following conventions. Let $\scrD_i$, $i = 1, \ldots, n$, be defined by
\begin{align*}
\scrD_i := \big\{R:=Q_1 \times \dots \times Q_n &\colon Q_1, \dots, Q_n \in \calD, \\
&\ell(Q_1)= \cdots = \ell(Q_i) = 2\ell(Q_{i+1}) = \cdots = 2\ell(Q_n)\big\}.
\end{align*}
Suppose $R=Q_1\times \dots \times Q_n \in \scrD_i$ and $Q \in \calD$.
Define $\ell(R) := \ell(Q_1)$ and
$$
d(Q,R) = \max_m d(Q, Q_m).
$$
Set $V^i_R F$ to be the $n$-tuple of functions
$$
V^i_R F := (E_{Q_1}f_1, \dots, E_{Q_{i-1}} f_{i-1},\Delta_{Q_i} f_{i},E_{Q_{i+1}} f_{i+1}, \dots, E_{Q_n} f_{n}).
$$

Using the above splitting we have
\begin{equation}\label{eq:ReadyToSplit}
\sum_{Q \in \calD_{\good}} \Lambda_{n+1}(Q)
= \sum_{i=1}^n \sum_{Q \in \calD_{\good}} \mathop{\sum_{R \in \scrD_i}}_{\ell(R) > \ell(Q)}
\langle T[V^i_R F], \Delta_Q f_{n+1} \rangle.
\end{equation}
Most of the time we will consider each $i$ separately.
However, related to every $i$ there will be a paraproduct type term.
These will be summed together in Section \ref{sec:StepIV} giving one simple paraproduct.

Let us rewrite the pairings $\langle T[V^i_R F], \Delta_Q f_{n+1} \rangle$
using Haar functions. Expanding
$$
\Delta_Q f_{n+1}=  \langle f_{n+1}, h_Q \rangle h_Q
$$
there holds
$$
\langle T[V^i_R F], \Delta_Q f_{n+1} \rangle
= \langle \langle T[V^i_R F], h_Q \rangle, \langle f_{n+1}, h_Q \rangle  \rangle.
$$
Related to the set $R=Q_1 \times \dots \times Q_n \in \scrD_i$, define the $n$-tuple
\begin{equation}\label{eq:h_RAsTuple}
h_{R, i} = (h_{Q_1}^0,  \dots, h_{Q_{i-1}}^0, h_{Q_i}, h_{Q_{i+1}}^0, \dots, h_{Q_{n}}^0).
\end{equation}
Using $h_{R,i}$ we can write
$$
\langle T[V^i_R F], h_Q \rangle
=\langle Th_{R,i}, h_Q \rangle [\langle F,h_{R,i} \rangle],
$$
where $\langle Th_{R,i}, h_Q \rangle$ is the natural operator defined using \eqref{eq:defnTpair} and
$$
\langle F,h_{R,i} \rangle:=(\langle f_1, h_{Q_1}^0 \rangle,  \dots, \langle f_{i-1}, h_{Q_{i-1}}^0\rangle,
\langle f_i, h_{Q_i} \rangle, \langle f_{i+1},h_{Q_{i+1}}^0\rangle, \dots, \langle f_n, h_{Q_{n}}^0\rangle).
$$
Altogether, we have
$$
\langle T[V^i_R F], \Delta_Q f_{n+1} \rangle
= \langle \langle Th_{R,i}, h_Q \rangle [\langle F,h_{R,i} \rangle],\langle f_{n+1}, h_Q \rangle \rangle.
$$

We now fix one $i$ in \eqref{eq:ReadyToSplit} and start to study the related term
\begin{equation}\label{eq:Fixedi}
\sum_{Q \in \calD_{\good}} \mathop{\sum_{R \in \scrD_i}}_{\ell(R) > \ell(Q)}
\langle \langle Th_{R,i}, h_Q \rangle [\langle F,h_{R,i} \rangle],\langle f_{n+1}, h_Q \rangle \rangle.
\end{equation}

\subsection{Step I: separated cubes}
Here the part
\begin{align*}
\sigma_1^i &=  \sum_{Q \in \calD_{\good}} \mathop{\mathop{\sum_{R \in \scrD_i}}_{\ell(R) > \ell(Q)}}_{d(Q, R) > \ell(Q)^{\gamma}(\ell(R)/2)^{1-\gamma}}
\langle \langle Th_{R,i}, h_Q \rangle [\langle F,h_{R,i} \rangle],\langle f_{n+1}, h_Q \rangle \rangle
\end{align*}
of \eqref{eq:Fixedi} is considered.
We begin with the following lemma on the existence of nice common parents. A short proof is given, albeit it is morally the same as in the case $n=1$ in \cite{Hy3}.
If $R= Q_1 \times \dots \times Q_n \in \scrD_i$ and $Q \in \calD$ are such that there exists a cube $K \in \calD$ so that
$Q,Q_1, \dots,Q_n \subset K$, then the minimal such $K$ is denoted by $Q \vee R $.

\begin{lem}\label{lem:ComPar}
Suppose $R = Q_1 \times \cdots \times Q_n \in \scrD_i$ and $Q \in \calD_{\good}$ are such that there holds $d(Q, R) > \ell(Q)^{\gamma}(\ell(R)/2)^{1-\gamma}$.
Then there exists  $K \in \calD$ so that $Q_1 \cup \cdots \cup Q_n \cup Q \subset K$ and
$$
d(Q,R) \gtrsim \ell(Q)^{\gamma}\ell(K)^{1-\gamma}.
$$
\end{lem}

\begin{proof}
Let $K \in \calD$ be the minimal parent of $Q$ for which both of the following two conditions hold:
\begin{itemize}
\item $\ell(K) \ge 2^r \ell(Q)$;
\item $d(Q,R) \le \ell(Q)^{\gamma}\ell(K)^{1-\gamma}$.
\end{itemize}
If we had that $Q_m \subset K^c$ for some $m$, we would get by the goodness of the cube $Q$ that
$$
\ell(Q)^{\gamma}\ell(K)^{1-\gamma} < d(Q,R) \le \ell(Q)^{\gamma}\ell(K)^{1-\gamma},
$$
which is a contradiction. Moreover, we have
$$
\ell(Q)^{\gamma}(\ell(R)/2)^{1-\gamma} <  d(Q,R) \le \ell(Q)^{\gamma}\ell(K)^{1-\gamma}
$$
implying that $\ell(K) \ge \ell(R)$. Thus, there holds $Q_1 \cup \cdots \cup Q_n \cup Q \subset K$.

It remains to note that the estimate $d(Q,R) \gtrsim \ell(Q)^{\gamma}\ell(K)^{1-\gamma}$ is a trivial consequence of the minimality of $K$. There is something
to check only if $\ell(K) = 2^r\ell(Q)$. But then $\ell(K) \lesssim \ell(R)$ and so
$$
d(Q,R) \gtrsim \ell(Q)^{\gamma}\ell(R)^{1-\gamma} \gtrsim \ell(Q)^{\gamma}\ell(K)^{1-\gamma}.
$$
\end{proof}

We use the common parents to organize $\sigma^i_1$ as
\begin{equation}\label{eq:SepOrga}
\sum_{Q \in \calD_{\good}} \mathop{\mathop{\sum_{R \in \scrD_i}}_{\ell(R) > \ell(Q)}}_{d(Q, R) > \ell(Q)^{\gamma}(\ell(R)/2)^{1-\gamma}}
= \sum_{j_1=2}^{\infty} \sum_{j_2 = 1}^{j_1-1}  \sum_{K \in \calD} \mathop{\mathop{\mathop{\sum_{Q \in \calD_{\good},R \in \scrD_i}}_{d(Q, R) > \ell(Q)^{\gamma}(\ell(R)/2)^{1-\gamma}}}_{2^{j_1}\ell(Q) = 2^{j_2}\ell(R) = \ell(K)}}_{Q \vee R =K}.
\end{equation}
Suppose $j_1$, $j_2$, $K$, $Q$ and $R$ are as in the above sum.
We will show that for some large enough constant $C$ there holds that
\begin{equation}\label{eq:SepShiftEst}
\frac{2^{\alpha j_1/2}}{C} \frac{|K|^n}{\prod_{m=1}^n|Q_m|^{1/2}|Q|^{1/2}} \langle Th_{R,i}, h_Q \rangle
\in \calA(\calC_{\CZ,\alpha}(K)).
\end{equation}
Here we are using the notation defined in Lemma \ref{lem:AveRbdd}.
This then implies that for fixed $j_1$ and $j_2$ the inner double sum in \eqref{eq:SepOrga} multiplied by
$$
\frac{2^{\alpha j_1/2}}{C\Big[\|K\|_{\CZ_{\alpha}, \varpi} + \|T\|_{\WBP, \varpi} + \sum_{m=0}^n \|T^{m*}1\|_{\BMO_{r_m}(\calT^m_n)}\Big]}
$$
is an operator-valued $n$-linear shift as in the representation theorem acting on $f_1, \dots, f_n$ and paired with $f_{n+1}$.
The complexity of this shift is $k=(k_1, \dots, k_{n+1})$, where
$k_1=\dots =k_i=2k_{i+1}=\dots=2k_{n}=j_2$ and $k_{n+1}=j_1$.
Therefore,
\eqref{eq:SepOrga} is of the right form for the representation theorem.

We turn to prove \eqref{eq:SepShiftEst}. We write $R=Q_1 \times \cdots \times Q_n$,
and suppose first that $d(Q,R) \le C_d \ell(Q)$, where $C_d$ is a dimensional constant.
From the proof of Lemma \ref{lem:ComPar} it is clear that, if $r$ is fixed large enough, then $K \subset Q^{(r)}$ implying that $\ell(K) \sim \ell(Q)$ and $j_1 \le r \lesssim 1$.

 If $x \in \R^d$ and $y=(y_1, \dots, y_n) \in \R^{dn}$ are such that $(x,y_1,\dots,y_n) \in \R^{d(n+1)} \setminus \Delta$,
where $\Delta$ consists of the diagonal points $(x,\dots, x)$,  define
\begin{equation}\label{eq:lambda}
\lambda(x,y)
= \Big(\sum_{m=1}^n |x-y_m|\Big)^{-dn}.
\end{equation}
By slight abuse of notation we write
$$
h_{R, i}(y) = h_{Q_1}^0(y_1)  \cdots h_{Q_{i-1}}^0(y_{i-1}) h_{Q_i}(y_i) h_{Q_{i+1}}^0(y_{i+1}) \cdots h_{Q_{n}}^0(y_n).
$$
In \eqref{eq:h_RAsTuple} we gave a different definition for $h_{R,i}$, but it should be clear from the context which one we use (see the next equation, for instance).

We  write out $\langle Th_{R,i}, h_Q \rangle$ as
\begin{equation}\label{eq:WriteOut}
\begin{split}
\langle Th_{R,i}, h_Q \rangle
= \int_{\R^d} \int_{\R^{dn}} \frac{K(x,y)}{\lambda(x,y)} \lambda(x,y) h_{R,i}(y) h_Q(x) \ud y \ud x.
\end{split}
\end{equation}
Notice that this integral does not make sense as such, but has to be interpreted as in \eqref{eq:PairingOp}.
There holds that $K(x,y)/\lambda(x,y) \in \calC_{\CZ,\alpha}(K)$. Because $d(Q,R) > \ell(Q)$,  there also holds that
\begin{equation*}
\begin{split}
\int_{\R^d} \int_{\R^{dn}}  \frac{|h_{R,i}(y) h_Q(x)|}{\big(\sum_{m=1}^n |x-y_m|\big)^{dn}} \ud y \ud x
\le \frac{|R|^{1/2} |Q|^{1/2}}{\ell(Q)^{dn}} \sim 2^{-\alpha j_1/2} \frac{\prod_{m=1}^n |Q_m|^{1/2} |Q|^{1/2}}{|K|^n}.
\end{split}
\end{equation*}
This together with \eqref{eq:WriteOut} proves \eqref{eq:SepShiftEst} in the case $d(Q,R) \le C_d \ell(Q)$.

Consider then the case $d(Q,R) > C_d \ell(Q)$. If $(c_Q,y_1, \dots, y_n) \not \in \Delta$, define
\begin{equation}\label{eq:lambdaQ}
\lambda_Q(x,y)
= |x-c_Q|^\alpha\Big(\sum_{m=1}^n |c_Q-y_m|\Big)^{-(dn+\alpha)}.
\end{equation}
The zero average of $h_Q$ allows us to write
$$
\langle Th_{R,i}, h_Q \rangle
=\int_{\R^d} \int_{\R^{dn}} \frac{K(x,y)-K(c_Q,y)}{\lambda_Q(x,y)} \lambda_Q(x,y) h_{R,i}(y) h_Q(x) \ud y \ud x.
$$
Since $d(Q,R) > C_d \ell(Q)$,  it holds that here $(K(x,y)-K(c_Q,y))/\lambda_Q(x,y) \in \calC_{\CZ,\alpha}(K)$.
In addition, using  the fact $d(Q,R) \gtrsim \ell(Q)^\gamma \ell(K)^{1-\gamma}$ from Lemma \ref{lem:ComPar} we have the estimate
\begin{equation*}
\int_{\R^d} \int_{\R^{dn}} \frac{|x-c_Q|^\alpha |h_{R,i}(y) h_Q(x)|}{\big(\sum_{m=1}^n |c_Q-y_m|\big)^{dn+\alpha}} \ud y \ud x
\lesssim  \frac{\ell(Q)^\alpha |R|^{1/2}|Q|^{1/2}}{(\ell(Q)^\gamma \ell(K)^{1-\gamma})^{dn+\alpha}}
=  \frac{ |R|^{1/2}|Q|^{1/2}}{2^{\alpha j_1/2}|K|^n},
\end{equation*}
where we recalled that $\gamma(dn+\alpha)=\alpha/2$. This proves \eqref{eq:SepShiftEst} in the case $d(Q,R)> C_d \ell(Q)$.
We are done with Step I.

\subsection{Step II: nearby cubes}
We now look at the sum
$$
\sigma^i_2 = \sum_{Q \in \calD_{\good}} \mathop{\mathop{\mathop{\sum_{R=Q_1 \times \cdots \times Q_n \in \scrD_i}}_{\ell(R) > \ell(Q)}}_{d(Q, R) \le \ell(Q)^{\gamma}(\ell(R)/2)^{1-\gamma}}}_{Q_m \cap Q = \emptyset \textup{ for some } m}
\langle \langle Th_{R,i}, h_Q \rangle [\langle F,h_{R,i} \rangle],\langle f_{n+1}, h_Q \rangle \rangle.
$$
Let $Q \in \calD$ and $R=Q_1 \times \dots \times Q_n \in \scrD_i$ be as in $\sigma^i_2$. Suppose $Q_{m_0}$ is a cube such that
$Q_{m_0} \cap Q = \emptyset$. If $\ell(Q_{m_0}) \ge \ell(Q^{(r)})$, then the goodness of the cube $Q$ implies that
$$
d(Q,R) \ge d(Q,Q_{m_0}) > \ell(Q)^\gamma \ell(Q_{m_0})^{1-\gamma} \ge \ell(Q)^\gamma (\ell(R)/2)^{1-\gamma},
$$
which is a contradiction. Thus, we have $\ell(R) \le 2\ell(Q_{m_0}) \le \ell(Q^{(r)})$.
Suppose $Q_m \cap  Q^{(r)}= \emptyset$ for some $m$. Then
$$
d(Q,R) \ge d(Q,(Q^{(r)})^c) > \ell(Q)^\gamma \ell(Q^{(r)})^{1-\gamma}
> \ell(Q)^\gamma (\ell(R)/2)^{1-\gamma},
$$
which is again a contradiction. We conclude that $Q \vee R \subset Q^{(r)}$.

These observations show that
$$
\sigma^i_2
=
\sum_{j_1 = 1}^r \sum_{j_2 = 0}^{j_1-1} \sum_{K \in \calD} \sum_{\substack{
Q \in \calD_{\good},R \in \scrD_i \\
d(Q, R) \le \ell(Q)^{\gamma}(\ell(R)/2)^{1-\gamma} \\
Q_m \cap Q = \emptyset \textup{ for some } m \\
2^{j_1}\ell(Q) = 2^{j_2}\ell(R) = \ell(K) \\
Q \vee R = K}}
\langle \langle Th_{R,i}, h_Q \rangle [\langle F,h_{R,i} \rangle],\langle f_{n+1}, h_Q \rangle \rangle.
$$
Similarly as in Step I, we need to show that if $j_1$, $j_2$, $Q$, $R$ and $K$ are as in $\sigma^i_2$, then
\begin{equation*}
\frac{2^{\alpha j_1/2}}{C} \frac{|K|^n}{\prod_{m=1}^n|Q_m|^{1/2}|Q|^{1/2}} \langle Th_{R,i}, h_Q \rangle
\in  \calA(\calC_{\CZ,\alpha}(K)).
\end{equation*}

Recall the function $\lambda(x,y)=\big(\sum_{m=1}^n |x-y_m|\big)^{-dn}$ and write
$$\langle Th_{R,i}, h_Q \rangle
= \int_{\R^d} \int_{\R^{dn}} \frac{K(x,y)}{\lambda(x,y)} \lambda(x,y) h_{R,i}(y) h_Q(x) \ud y \ud x.
$$
Here we have $K(x,y)/\lambda(x,y) \in \calC_{\CZ,\alpha}(K)$. Let $m_0 \in \{1, \dots, n\}$ be such that $Q_{m_0} \cap Q =\emptyset$.
Then, using the estimate
$
\int_{\R^d} (c+|x-y|)^{-d-\beta} \ud y
\lesssim_{d,\beta} c^{-\beta},
$
where $c, \beta >0$, we have
\begin{equation}\label{eq:NearbyEst}
\begin{split}
\int_{\R^d} \int_{\R^{dn}}\frac{|h_{R,i}(y) h_Q(x)|}{\big(\sum_{m=1}^n |x-y_m|\big)^{dn}}\ud y \ud x
& \lesssim |R|^{-1/2} |Q|^{-1/2} \int_Q \int_{Q^{(r)} \setminus Q} \frac{1}{|x-y_{m_0}|^d} \ud y_{m_0} \ud x \\
& \lesssim |R|^{-1/2} |Q|^{-1/2} |Q| \sim 2^{-\alpha j_1/2} \frac{|R|^{1/2}|Q|^{1/2}}{|K|^n}.
\end{split}
\end{equation}
This concludes Step II.

\subsection{Step III: error terms}
We start working with the sum
$$
\sigma^i_3 = \sum_{Q \in \calD_{\good}} \mathop{\sum_{R \in \scrD_i}}_{R = (Q^{(k)})^{i} \times (Q^{(k-1)})^{n-i}  \textup{ for some } k \ge 1}
\langle \langle Th_{R,i}, h_Q \rangle [\langle F,h_{R,i} \rangle],\langle f_{n+1}, h_Q \rangle \rangle,
$$
which is what is left after Step I and Step II. Here and in what follows $Q^i = Q \times \cdots \times Q$, where there are $i$ members in the Cartesian product.
First, we define abbreviations related to certain error terms.
Let $Q \in \calD_{\good}$ and $1 \le k \in \Z$.
We will define the function tuple $\Phi_{Q,k,i,j}=(\phi_{Q,k,i,j}^1, \dots, \phi_{Q,k,i,j}^n)$ for every $j = 1, \ldots, n$.
If $j \le i-1$ we set
$$
\phi_{Q,k,i,j}^1 = \cdots = \phi_{Q,k,i,j}^{j-1} \equiv |Q^{(k)}|^{-1/2}, \, \phi_{Q,k,i,j}^j = |Q^{(k)}|^{-1/2} 1_{(Q^{(k)})^c},
$$
$$
\phi_{Q,k,i,j}^{j+1} = \cdots = \phi_{Q,k,i,j}^{i-1} =  |Q^{(k)}|^{-1/2} 1_{Q^{(k)}}, \, \phi_{Q,k,i,j}^{i} = h_{Q^{(k)}},
$$
$$
\phi_{Q,k,i,j}^{i+1} = \cdots = \phi_{Q,k,i,j}^{n} =  |Q^{(k-1)}|^{-1/2} 1_{Q^{(k-1)}}.
$$
If $j=i$ we set
$$
\phi_{Q,k,i,i}^1 = \cdots = \phi_{Q,k,i,i}^{i-1} \equiv |Q^{(k)}|^{-1/2}, \, \phi_{Q,k,i,i}^i = 1_{(Q^{(k-1)})^c}[-h_{Q^{(k)}} + \langle h_{Q^{(k)}} \rangle_{Q^{(k-1)}}],
$$
$$
\phi_{Q,k,i,i}^{i+1} = \cdots = \phi_{Q,k,i,i}^{n} =  |Q^{(k-1)}|^{-1/2} 1_{Q^{(k-1)}}.
$$
Finally, for $j \in \{i+1, \dots,n\}$ we set
$$
\phi_{Q,k,i,j}^1 = \cdots = \phi_{Q,k,i,j}^{i-1} \equiv |Q^{(k)}|^{-1/2}, \, \phi_{Q,k,i,j}^i \equiv \langle h_{Q^{(k)}} \rangle_{Q^{(k-1)}},
$$
$$
\phi_{Q,k,i,j}^{i+1} = \cdots = \phi_{Q,k,i,j}^{j-1} \equiv |Q^{(k-1)}|^{-1/2}, \, \phi_{Q,k,i,j}^{j} =  |Q^{(k-1)}|^{-1/2}  1_{(Q^{(k-1)})^c},
$$
$$
\phi_{Q,k,i,j}^{j+1} = \cdots = \phi_{Q,k,i,j}^{n} =   |Q^{(k-1)}|^{-1/2} 1_{Q^{(k-1)}}.
$$
With the same abuse of notation as with $h_{R,i}$ we set for
$y =(y_1, \dots, y_n) \in \R^{dn}$ that
$$
\Phi_{Q,k,i,j}(y)
=\prod_{m=1}^n\phi_{Q,k,i,j}^m(y_m).
$$

We are ready to move forward. Suppose $Q$ and $R$ are as in $\sigma^i_3$ with $\ell(R)=2^k\ell(Q)$.
We will denote the function $h_{R,i}$ also by $u_{Q,k,i}$. Note that if $y=(y_1, \dots, y_n)$ with $y_m \in Q^{(k-1)}$ for all $m$,
then
$u_{Q,k,i}(y)= \langle u_{Q,k,i}\rangle_{Q^n}$.
With the previous definitions we have the identity
$$
\langle Th_{R,i}, h_Q  \rangle
=\langle u_{Q,k,i} \rangle_{Q^n}\langle T1, h_Q  \rangle
-\sum_{j=1}^n \langle T\Phi_{Q,k,i,j}, h_Q  \rangle,
$$
where we recall that the operators
$\langle T1, h_Q  \rangle$ and $ \langle T\Phi_{Q,k,i,j}, h_Q  \rangle$ are interpreted as in \eqref{eq:PairingOp}.
This gives that $\sigma^i_3= \sigma^i_{3,\pi}-\sum_{j=1}^n\sigma^i_{3,e,j}$, where
\begin{equation} \label{eq:Parai}
\sigma^i_{3,\pi}
:=\sum_{Q \in \calD_{\good} } \sum_{k=1}^\infty
\langle u_{Q,k,i} \rangle_{Q^n} \langle \langle T1, h_Q  \rangle  [\langle F, u_{Q,k,i} \rangle],\langle f_{n+1}, h_Q \rangle \rangle
\end{equation}
and
\begin{equation*}
\sigma^i_{3,e,j}:=
\sum_{Q \in \calD_{\good} } \sum_{k=1}^\infty
\langle \langle T\Phi_{Q,k,i,j}, h_Q  \rangle    [\langle F, u_{Q,k,i} \rangle],\langle f_{n+1}, h_Q \rangle \rangle.
\end{equation*}

The term $\sigma^i_{3,\pi}$ will become part of the paraproduct that is considered in the
next section.
Now we look at the error terms $\sigma^i_{3,e,j}$.
We consider each of them separately, so we fix
$j \in \{1, \dots, n\}$.

The term $\sigma^i_{3,e,j}$ can be written as
$$
\sigma^i_{3,e,j}
= \sum_{k=1}^\infty \sum_{K \in \calD} \sum_{\substack{Q \in \calD_{\good} \\ Q^{(k)}=K}}
\langle \langle T\Phi_{Q,k,i,j}, h_Q  \rangle    [\langle F, u_{Q,k,i} \rangle],\langle f_{n+1}, h_Q \rangle \rangle.
$$
From here it is seen that this produces a series of shifts that satisfy the requirements of the representation theorem
once we have shown that if $k$, $K$ and $Q$ are as in $\sigma^i_{3,e,j}$, then
\begin{equation}\label{eq:ErrShiftEst}
C^{-1} 2^{\alpha k/2} |K|^{n/2}|Q|^{-1/2}\langle T\Phi_{Q,k,i,j}, h_Q  \rangle  \in \calA(\calC_{\CZ,\alpha}(K)).
\end{equation}
Notice that we have the right normalisation since
$$
|K|^{n/2}|Q|^{-1/2} \sim |Q^{(k)}|^{-i/2}|Q^{(k-1)}|^{-(n-i)/2} |Q|^{-1/2}|K|^n.
$$

Recall the functions $\lambda$ and $\lambda_Q$ from \eqref{eq:lambda} and \eqref{eq:lambdaQ}.
Notice that the $j$-coordinate of $\Phi_{Q,k,i,j}$ is supported in the complement of $Q$.
Therefore, using the definition \eqref{eq:PairingOp} of $\langle T\Phi_{Q,k,i,j}, h_Q  \rangle$,
we have that $\langle T\Phi_{Q,k,i,j}, h_Q  \rangle$ is the sum of
\begin{equation}\label{eq:ErrNear}
\int_{\R^d}\int_{(C_dQ)^n}\frac{K(x,y)}{\lambda(x,y)} \lambda(x,y)\Phi_{Q,k,i,j}(y) h_Q(x) \ud y \ud x
\end{equation}
and
\begin{equation}\label{eq:ErrFar}
\int_{\R^d}\int_{((C_dQ)^n)^c}\frac{K(x,y)-K(c_Q,y)}{\lambda_Q(x,y)} \lambda_Q(x,y)\Phi_{Q,k,i,j}(y) h_Q(x) \ud y \ud x.
\end{equation}

Let us first consider the case $k \le r$. Using the pointwise normalisation $|\Phi_{Q,k,i,j}(y)| \lesssim |Q^{(k)}|^{-n/2}
=|K|^{-n/2}$ we get, similarly as in \eqref{eq:NearbyEst}, that
\begin{equation*}
\begin{split}
\int_{\R^d}\int_{(C_dQ)^n}\frac{|\Phi_{Q,k,i,j}(y) h_Q(x)|}{\big(\sum_{m=1}^n |x-y_m|\big)^{dn}}  \ud y \ud x
& \lesssim |K|^{-n/2}|Q|^{-1/2} \int_Q \int_{C_dQ \setminus Q} \frac{1}{ |x-y_j|^{d}} \ud y_j \ud x \\
&\lesssim |K|^{-n/2}|Q|^{1/2}.
\end{split}
\end{equation*}
In the same way, there holds that
\begin{equation*}
\begin{split}
\int_{\R^d}\int_{((C_dQ)^n)^c} \frac{|x-c_Q|^\alpha |\Phi_{Q,k,i,j}(y) h_Q(x)| }{\big(\sum_{m=1}^n |c_Q-y_m|\big)^{dn+\alpha}} \ud y \ud x
 &\lesssim \frac{|Q|^{1/2}}{|K|^{n/2}}  \int_{ Q^c} \frac{\ell(Q)^\alpha}{ |c_Q-y_j|^{d+\alpha}} \ud y_j  \\
&\lesssim |K|^{-n/2}|Q|^{1/2}.
\end{split}
\end{equation*}
These estimates prove \eqref{eq:ErrShiftEst} in the case $k \le r$.

Assume then that $k > r$. The $j$-coordinate of $\Phi_{Q,k,i,j}$ is supported in $(Q^{(k-1)})^c$.
Because $Q$ is a good cube, we have
$Q^{(k-1)} \supset Q^{(r)} \supset C_d Q$, which uses the fact that $r$ is large enough.
Thus, the integral \eqref{eq:ErrNear} is zero. Related to \eqref{eq:ErrFar}  there holds
by the goodness of the cube $Q$ that
\begin{equation*}
\begin{split}
\int_{\R^d}\int_{\R^{dn}} \frac{|x-c_Q|^\alpha |\Phi_{Q,k,i,j}(y) h_Q(x)| }{\big(\sum_{m=1}^n |c_Q-y_m|\big)^{dn+\alpha}} \ud y \ud x
 &\lesssim \frac{|Q|^{1/2}}{|K|^{n/2}}  \int_{(Q^{(k-1)})^c} \frac{\ell(Q)^\alpha}{ |c_Q-y_j|^{d+\alpha}} \ud y_j  \\
&\lesssim \frac{|Q|^{1/2}}{|K|^{n/2}} \frac{\ell(Q)^\alpha}{(\ell(Q)^\gamma \ell(K)^{1-\gamma})^\alpha}.
\end{split}
\end{equation*}
Since $1-\gamma \ge 1/2$, we also get the right geometric decay.
We have proved  \eqref{eq:ErrShiftEst} also in the case $k > r$.

 \subsection{Step IV: paraproduct}\label{sec:StepIV}
We consider  the term
\begin{equation*}
\sigma^i_{3,\pi}
:=\sum_{Q \in \calD_{\good} } \sum_{k=1}^\infty
\langle u_{Q,k,i} \rangle_{Q^n} \langle \langle T1, h_Q  \rangle  [\langle F, u_{Q,k,i} \rangle],\langle f_{n+1}, h_Q \rangle \rangle
\end{equation*}
from \eqref{eq:Parai}. Here we will sum up the corresponding terms $\sigma^i_{3,\pi}$, $i \in \{1, \dots, n\}$, to get one paraproduct.

Recalling the implicit summation over the cancellative Haar functions we have that
\begin{equation*}
\begin{split}
&\quad \langle u_{Q,k,i} \rangle_{Q^n}\langle   T1, h_Q  \rangle[  \langle F, u_{Q,k,i} \rangle] \\
&=\langle T1, h_Q  \rangle [(\langle f_1 \rangle_{Q^{(k)}}, \dots, \langle f_{i-1} \rangle_{Q^{(k)}}, \langle \Delta_{Q^{(k)}} f_i \rangle_{Q^{(k-1)}}
,\langle f_{i+1} \rangle_{Q^{(k-1)}}, \dots, \langle f_{n} \rangle_{Q^{(k-1)}} )].
\end{split}
\end{equation*}
Summing these together over $i \in \{1, \dots, n\}$ yields
$$
\sum_{i=1}^n
\langle u_{Q,k,i} \rangle_{Q^n}\langle   T1, h_Q  \rangle[  \langle F, u_{Q,k,i} \rangle]
= \langle   T1, h_Q  \rangle[ \langle F \rangle_{Q^{(k-1)}}]
-\langle   T1, h_Q  \rangle[ \langle F \rangle_{Q^{(k)}}],
$$
where $\langle F \rangle_Q$ denotes $(\langle f_1 \rangle_Q, \dots, \langle f_n \rangle_Q)$.
Finally, we get the desired paraproduct:
\begin{equation*}
\begin{split}
\sum_{i=1}^n \sigma^i_{3,\pi}
&= \sum_{Q \in \calD_{\good} } \sum_{k=1}^\infty \big\langle
\langle   T1, h_Q  \rangle[ \langle F \rangle_{Q^{(k-1)}}]
-\langle   T1, h_Q  \rangle[ \langle F \rangle_{Q^{(k)}}], \langle f_{n+1}, h_Q \rangle\big\rangle \\
&= \sum_{Q \in \calD_{\good} }\langle \langle   T1, h_Q  \rangle[ \langle F \rangle_{Q}],\langle f_{n+1}, h_Q \rangle \rangle
= \langle \pi_{\calD , [T1]_{\calD_{\good}}}[F], f_{n+1} \rangle.
\end{split}
\end{equation*}

\subsection{Synthesis of the steps I-IV}\label{sec:Synth}
We summarize what we have done so far. We have shown that the terms
$\sigma_1^i$, $\sigma_2^i$ and  $\sigma_{3,e,j}^i$, where $i,j \in \{1, \dots, n\}$, can be represented in terms of shifts.
Also, we proved that the sum $\sum_{i=1}^n \sigma^i_{3,\pi}$ produces a paraproduct. Therefore, one of the main terms
$$
\sum_{Q \in \calD_{\good}} \Lambda_{n+1}(Q)
=\sum_{i=1}^{n} \Big[\sigma_1^i+\sigma_2^i+\sum_{j=1}^n\sigma_{3,e,j}^i+\sigma^i_{3,\pi}\Big]
$$
satisfies the required identity for the representation theorem.
By symmetry, this gives the corresponding identity for the terms $\sum_{Q \in \calD_{\good}} \Lambda_{m}(Q)$, $m \in \{1, \dots, n\}$.

\subsection{Step V: diagonal}\label{sec:Diagonal}
To finish the proof of Theorem \ref{thm:Rep} it remains to
consider the term
\begin{equation*}
\begin{split}
 \sum_{Q \in \calD_{\good}}
\Big\langle T^{m*}\big(E_{\ell(Q)}g^m_1,  \dots,
&E_{\ell(Q)} g^m_{j-1}, D_{\ell(Q)} g^m_{j}, E_{\frac{\ell(Q)}{2}} g^m_{j+1}, \dots, E_{\frac{\ell(Q)}{2}} g^m_{n} \big), \Delta_{Q}g^m_{n+1}\Big\rangle,
\end{split}
\end{equation*}
where $m \in \{1, \dots, n\}$ and $j \in \{m, \dots, n\}$. This is the term from \eqref{eq:ContainsDiag}.
For notational convenience write $g_j:=g^m_j$ and   $G=(g_1, \dots, g_n)$.
The term under consideration can be written as
\begin{equation*}
\begin{split}
 \sum_{Q \in \calD_{\good}} &\sum_{\substack{R \in \scrD_j \\ \ell(R)=\ell(Q)}}
\big \langle T^{m*}[V^j_{R} G], \Delta_Q g_{n+1} \big \rangle \\
&=\sum_{j=0}^\infty \sum_{K \in \calD} \sum_{\substack{Q \in \calD_{\good}, R \in \scrD_j \\ 2^j\ell(Q)=2^j \ell(R)= \ell(K) \\ Q \vee R =K}}
\langle\langle T^{m*} h_{R,j}, h_Q \rangle [\langle G, h_{R,j} \rangle],  \langle g_{n+1}, h_Q \rangle \rangle.
\end{split}
\end{equation*}
Notice that the common parents exist since the cubes $Q$ are good.
Those pairs $(Q,R)$,  $R=Q_1 \times \dots \times Q_n$, where $Q \cap Q_u= \emptyset$ for some $u$, can
be handled with the arguments presented above: either $Q$ and $R$ are separated as in Step I, or then
$Q$ and $R$ are close to each other as in Step II. So the new part  here is
\begin{equation}\label{eq:AlmDiag}
\begin{split}
&\sum_{Q \in \calD_{\good}}  \sum_{\substack{R =Q_1 \times \cdots \times  Q_n \in \scrD_j \\
Q_1= \cdots= Q_j = Q  \\ Q_{j+1},\dots ,Q_n  \in \ch(Q)}}
\langle\langle T^{m*} h_{R,j}, h_Q \rangle [\langle G, h_{R,j} \rangle],  \langle g_{n+1}, h_Q \rangle \rangle \\
&=\sum_{Q \in \calD} \sum_{\substack{Q_1, \dots, Q_n \in \calD \\ Q_1=\cdots= Q_j = Q_{n+1} =Q \\ Q_{j+1}, \dots, Q_{n} \in \ch(Q)}}
\langle a_{Q, Q_1 \dots, Q_{n+1}}
[ \langle f_1, \wt h_{Q_1} \rangle, \dots, \langle f_n ,\wt h_{Q_n} \rangle ],
 \langle f_{n+1}, \wt h_{Q_{n+1}} \rangle \rangle,
\end{split}
\end{equation}
where $a_{Q,(Q_i)}:=a_{Q, Q_1 \dots, Q_{n+1}}= 1_{\good}(Q)\langle T[\wt h_{Q_1}, \dots, \wt h_{Q_n}], \wt h_{Q_{n+1}} \rangle$.
We defined for $Q \in \calD$ that $1_{\good}(Q)=1$ if $Q$ is good  and $1_{\good}(Q)=0$ if $Q$ is not good.
If $j=m$, then $\wt h_{Q_i}=h^0_{Q_j}$ for $i \in \{1, \dots, n\} \setminus \{m\}$ and $\wt h_{Q_i}= h_{Q_i}$ for $i \in \{m,n+1\}$.
If $j \in \{m+1, \dots, n\}$, then $\wt h_{Q_i}=h^0_{Q_j}$ for $i \in \{1, \dots, n+1\} \setminus \{m, j\}$ and
$\wt h_{Q_i}= h_{Q_i}$ for $i \in \{m,j\}$.
We divide \eqref{eq:AlmDiag} into two by splitting the coefficients as $a_{Q,(Q_i)}=a_{Q,(Q_i),1}+a_{Q,(Q_i),2}$,
where
\begin{equation*}
\begin{split}
a_{Q,(Q_i),1}
&=1_{\good}(Q)\sum_{\substack{Q'_1, \dots, Q'_{n+1} \in \ch (Q) \\
Q'_u \not= Q'_l \text{ for some } u \text{ and } l}}
   \langle T[1_{Q'_1} \wt h_{Q_1}, \dots, 1_{Q'_{n}} \wt h_{Q_{n}}],
1_{Q_{n+1}'}\wt h_{Q_{n+1}} \rangle
\end{split}
\end{equation*}
and
\begin{equation*}
\begin{split}
a_{Q,(Q_i),2}
&= 1_{\good}(Q) \sum_{Q' \in \ch (Q)}
\langle T[1_{Q'} \wt h_{Q_1}, \dots,  1_{Q'} \wt h_{Q_n}],
1_{Q'}\wt h_{Q_{n+1}} \rangle.
\end{split}
\end{equation*}

Let $Q_1= \dots = Q_j = Q_{n+1} = Q \in \calD$ and $Q_{j+1}, \dots, Q_n \in \ch(Q)$.
Consider first the coefficient $a_{Q,(Q_i),1}$.
Suppose $Q'_1, \dots, Q'_{n+1} \in \ch (Q)$ are such that $Q'_u \not=Q'_l$
for some $u$ and $l$. Recall the function $\lambda(x,y)=\big(\sum_{m=1}^n |x-y_m|\big)^{-dn}$. Then
\begin{equation}\label{eq:DiagAdjCoef}
\langle T[1_{Q'_1}  \wt h_{Q_1}, \dots, 1_{Q'_{n}} \wt h_{Q_{n}}],
1_{Q_{n+1}'}\wt h_{Q_{n+1}} \rangle
= \int_{\R^d} \int_{\R^{dn}} \frac{K(x,y)}{\lambda(x,y)} \varphi(x,y) \ud y \ud x,
\end{equation}
where
\begin{equation*}
\varphi(x,y)=
\lambda(x,y)\prod_{m=1}^n 1_{Q'_m}(y_m) \wt h_{Q_m}(y_m) 1_{Q'_{n+1}}(x)\wt h_{Q_{n+1}}(x).
\end{equation*}
Similarly as in \eqref{eq:NearbyEst} we see that $\iint | \varphi(x,y) | \ud x \ud y \lesssim  |Q|^{-(n+1)/2} |Q|$.
Since $a_{Q,(Q_i),1}$ is a finite sum of terms of the form \eqref{eq:DiagAdjCoef}, this shows that
$$
C^{-1}\prod_{m=1}^{n+1} |Q_m|^{-1/2} |Q|^n a_{Q,(Q_i),1} \in \calA(\calC_{\CZ,\alpha}(K)).
$$
Consider then the coefficient $a_{Q,(Q_i),2}$. Suppose $Q' \in \ch(Q)$. We may obviously suppose that
$Q_{j+1} =  \cdots =  Q_n =Q'$.
Then, we have
$$
\langle T[1_{Q'} \wt h_{Q_1}, \dots,  1_{Q'} \wt h_{Q_n}],1_{Q'}\wt h_{Q_{n+1}} \rangle
= \pm \frac{|Q'|}{\prod_{m=1}^{n+1} |Q_m|^{1/2}} \frac{\langle T[1_{Q'} , \dots,  1_{Q'} ],1_{Q'} \rangle}{|Q'|},
$$
where $\langle T[1_{Q'} , \dots,  1_{Q'} ],1_{Q'} \rangle/|Q'| \in \calC_{\weak}(T)$.
Since $a_{Q,(Q_i),2}$ is a finite sum of operators of this type, we are done with Step V.

\subsection{Step VI: $T$ is not a priori bounded.}
It is possible to first prove a representation theorem in a
certain finite set up, where no a priori boundedness is needed to make the calculations legitimate (as all the sums are finite to begin with).
The proof is similar to the above except for some initial probabilistic preparations related to the finite setup inside a given fixed cube.
Reductions of this type appear e.g. in \cite{GH} and \cite{Hy4}.
We omit the technical details in our setting as they are similar.  A corollary of such a special representation is the boundedness of
$T$, say from $\prod_{m=1}^n L^{n+1}(X_m)$ to $L^{(n+1)/n}(Y_{n+1})$. After this, we can run the above argument. We are done with the proof.
\end{proof}

\begin{rem}
We make a remark here about the WBP in the linear setting, and describe a seemingly weaker condition in that setting, which can still
be used to estimate the ``diagonal'' in the $T1$ argument.

Suppose $X_1, X_2$ are $\UMD$ spaces and $1<p<\infty$.
Let $T \colon L^p(X_1) \to L^p(X_2^*)$ be a bounded linear operator.
Then, for $\{e_Q\}_{Q \in \calD} \subset X_1$, we have
\begin{equation}\label{eq:NewWeak}
\E \Big \| \sum_{Q \in \calD} \varepsilon_Q \langle T(e_Q1_Q) \rangle_Q1_Q \Big \|_{L^p(X_2^*)}
\lesssim \| T \|_{L^p(X_1) \to L^p(X_2^*)}
\E \Big \| \sum_{Q \in \calD} \varepsilon_Q  e_Q 1_Q \Big \|_{L^p(X_1)}.
\end{equation}
Indeed, the left hand side is dominated by
$$
\E \Big \| \sum_{Q \in \calD} \varepsilon_Q T(e_Q1_Q) \Big \|_{L^p(X_2^*)}
$$
by Stein's inequality, from which the claim follows using linearity.
We denote the smallest possible constant in
\eqref{eq:NewWeak} by $\calR_{\operatorname{weak}}$ (which may depend on the exponent $p$).

We recall that our usual WBP means the $\calR$-boundedness of the operators
\[|Q|^{-1} \langle T1_Q, 1_Q \rangle = \langle T1_Q \rangle_Q,\]
 which means
the estimate
\begin{equation}\label{eq:RWBP}
\E \Big\| \sum_{Q \in \calD} \varepsilon_Q \langle T(e_Q1_Q) \rangle_Q  \Big \|_{X_2^*}
\le C_{\WBP} \E \Big\| \sum_{Q \in \calD} \varepsilon_Q e_Q \Big\|_{X_1}.
\end{equation}
We show that $\calR_{\operatorname{weak}} \lesssim C_{\WBP}$. Actually,  there holds that
$$
C_{\WBP} \ge \sup_{x \in \R^d} \calR(\{ \langle T1_Q \rangle_Q \colon x \in Q \in \calD\})=: \wt C_{\WBP},
$$
and we show that $\calR_{\operatorname{weak}} \lesssim \wt C_{\WBP}$.
The proof is quite immediate.
Raising the left hand side of \eqref{eq:NewWeak} to power $p$ and using Kahane-Khintchine inequality we are left with
\begin{equation*}
\begin{split}
\E \int_{\R^d} \Big \|
&\sum_{Q \in \calD} \varepsilon_Q \langle T(e_Q1_Q) \rangle_Q1_Q(x) \Big \|_{X_2^*}^p \ud x \\
&\lesssim  \int_{\R^d} \E \calR(\{ \langle T1_Q \rangle_Q \colon x \in Q \in \calD\})^p
\Big \| \sum_{Q \in \calD} \varepsilon_Q e_Q1_Q(x) \Big \|_{X_1}^p \ud x \\
& \le  \wt C_{\WBP}^p \E \int_{\R^d}
\Big \| \sum_{Q \in \calD} \varepsilon_Q e_Q1_Q(x) \Big \|_{X_1}^p \ud x,
\end{split}
\end{equation*}
which gives the proof.

We then look at how the $\calR_{\operatorname{weak}}$-condition handles the diagonal in the $T1$ argument.
We consider a zero complexity shift whose coefficient operators $\{a_Q\}_{Q \in \calD}$ satisfy the estimate \eqref{eq:NewWeak}
(so $a_Q$ in place of $\langle T(1_Q)\rangle_Q$). Then we simply have:
\begin{equation}
\begin{split}
\Big\| \sum_{Q \in \calD} a_Q \langle f, h_Q \rangle h_Q \Big \|_{L^p(X_2^*)}
& \sim \E  \Big\| \sum_{Q \in \calD} \varepsilon_Q a_Q \langle f, h_Q \rangle 1_Q/|Q|^{1/2} \Big \|_{L^p(X_2^*)} \\
&\lesssim \E  \Big\| \sum_{Q \in \calD} \varepsilon_Q \langle f, h_Q \rangle 1_Q/|Q|^{1/2} \Big \|_{L^p(X_1)}
 \lesssim \| f \|_{L^p(X)}.
\end{split}
\end{equation}

Finally, we remark that we do not know how to formulate a similar weak boundedness condition in the multilinear setting.
\end{rem}

\section{$\calR$-boundedness of bilinear shifts}\label{sec:Rbound}
In this section we consider the $\calR$-boundedness properties of families of shifts.
Let $X_1 \dots,  X_n, Y_{n+1}$ be $\UMD$ spaces, $X_{n+1}=Y_{n+1}^*$
and let $\varpi_0 \colon \prod_m X_m \to \C$ be a contraction as in \eqref{eq:Contr}.
Fix some exponents $p_m \in (1, \infty)$ such that $\sum_{m=1}^{n+1}1/p_m=1$ and let  $\varpi \colon \prod_m L^{p_m}(X_m) \to \C$
be as in \eqref{eq:LpXContr}.

Suppose $\{S_j\}_{j}$ is a family of shifts, all of them defined with respect to a grid $\calD$ and having a fixed complexity $k$.
Recall the families $\calC(S_j)$ from \eqref{eq:NormCoef} that consist of the normalized coefficients.
It seems that the $\calR_\varpi$-boundedness condition from Definition \ref{def:Rvarpi} is not suitable for proving
that
$$
\calR_\varpi \Big(\Big\{ S_j \colon \prod_{m=1}^n L^{p_m}(X_m) \to L^{p_{n+1}'}(Y_{n+1})\Big\}_j\Big)
$$
is dominated by $\calR_{\varpi_0}(\bigcup_j \calC(S_j))$, that is,
we can not prove that if the family of coefficients $\bigcup_j \calC(S_j)$ is  $\calR_{\varpi_0}$-bounded, then $\{S_j\}_{j}$ is $\calR_{\varpi}$-bounded.

Currently, we are only able to come up with a suitable $\calR$-boundedness condition that works for families of shifts in the bilinear case. But even here we need to modify the one we used previously:
we need a somewhat stronger condition, but then also the conclusion is stronger -- i.e., the families of shits will satisfy the said stronger condition.
We now start considering the bilinear case, and here, as usual, no contraction $\varpi$ is needed.

We now introduce this stronger bilinear $\calR$-boundedness condition, which we call $\widehat{\calR}$-boundedness.
After this we will show that if the spaces $X_m$ have Pisier's property $(\alpha)$, then $\hR(\{S_j\}_{j}) \lesssim \hR (\bigcup_j \calC(S_j))$.

We denote by $\Rad_2(X)$ the space of those doubly indexed sequences $(e_{l,m})_{l,m=1}^\infty$ of elements of $X$ such that
$$
\| (e_{l,m})_{l,m=1}^\infty \|_{\Rad_2(X)}:= \Big ( \E  \Big \| \sum_{l,m=1}^\infty \varepsilon_{l,m} e_{l,m} \Big \|_X^2\Big)^{1/2}<\infty.
$$

\begin{defn}\label{def:hR}
Let $X_1$, $X_2$ and $Y_3$ be Banach spaces and write $X_3=Y_3^*$.
Suppose $\calT \subset \calL(X_1 \times X_2, Y_3)$ is a family of operators.
We say that $\calT$ is $\hR$-bounded if there exists a constant $C$ such that
for all $N \in \N$, $T_{t,u,v} \in \calT$, $e^1_{t,u} \in X_1$, $e^2_{u,v} \in X_2$ and $e^3_{t,v} \in X_3$, where $t,u,v \in \{1, \dots, N\}$, there holds that
\begin{equation*}
 \sum_{t,u,v=1}^N | \langle T_{t,u,v} [e^1_{t,u}, e^2_{u,v}], e^3_{t,v} \rangle  |
\le C \prod_{i=1}^3\| (e^i_{l,m})_{l,m=1}^N  \|_{\Rad_2(X_i)}.
\end{equation*}
The smallest possible constant $C$ is denoted by $\hR(\calT)$.
\end{defn}

\begin{thm}\label{thm:hRShift}
Let $X_1$, $X_2$ and $Y_3$ be $\UMD$ spaces with Pisier's property $(\alpha)$ and
suppose $p_1, p_2,p_3 \in (1, \infty)$ satisfy $\sum_m 1/p_m=1$.
Let $\calD$ be a dyadic lattice in $\R^d$ and fix some complexity $k=(k_1,k_2,k_3)$, $0 \le k_i \in \Z$.
Suppose $\{S_j\}_{j \in \calJ}$ is a family of operator-valued bilinear dyadic shifts with respect to the spaces $X_1$, $X_2$ and $Y_3$,
where each $S_j=S_{\calD,j}^k$ is a shift of complexity $k$ with respect to the lattice $\calD$.
Then
$$
\hR(\{S_j \colon L^{p_1}(X_1) \times L^{p_2}(X_2) \to L^{p_3'}(Y_3) \colon j \in \calJ\} )
\lesssim (1+\max_{j}k_j)\hR\Big(\bigcup_j \calC(S_j)\Big).
$$
\end{thm}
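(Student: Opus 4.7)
The plan is to adapt the proof of Theorem~\ref{thm:MultiShifts} in the bilinear case, with Pisier's property $(\alpha)$ providing the new input needed to disentangle the doubly-indexed $\Rad_2$-randomness from the $\varepsilon_K$-randomization of the Banach space-valued decoupling inequality~\eqref{eq:DecEst}. First, I would fix $N \in \N$, a choice $T_{t,u,v} = S_{j(t,u,v)}$ and vectors $f^1_{t,u} \in L^{p_1}(X_1), f^2_{u,v} \in L^{p_2}(X_2), f^3_{t,v} \in L^{p_3}(X_3)$; assume for concreteness that the cancellative slots of each $S_j$ are the first and the third. Expanding $E^{k_2}_K = E_K + \sum_{l=0}^{k_2-1} \Delta^l_K$ in the non-cancellative second slot and restricting $K$ to a sub-lattice $\calD_{\beta, \kappa}$, the $(1 + \max_j k_j)$ loss is absorbed in this reduction and it suffices to bound
\[
\sum_{t,u,v=1}^N \Big| \sum_{K \in \calD_{\beta, \kappa}} \big\langle A^{j(t,u,v)}_K(\Delta_K^{k_1} f^1_{t,u}, P f^2_{u,v}), \Delta_K^{k_3} f^3_{t,v}\big\rangle\Big|
\]
for a fixed simple projection $P \in \{\Delta^l_K : 0 \le l < k_2\} \cup \{E_K\}$ and fixed $\beta \in \{0, \ldots, \kappa\}$.

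Next, following the proof of Theorem~\ref{thm:MultiShifts}, I would express each $A_K^j$ as a $\calV^2$-decoupling integral against the operator-valued kernel $a^j_K(x, y_1, y_2)$, which satisfies $\pm a^j_K(x, y) \in \calC(S_j)$ for $x, y_1, y_2 \in K$. Two triangle inequalities (over $K$ and over $(x, y)$) followed by a re-indexing of the sum as one over triples on the extended pair-index sets $\{1, \ldots, N\} \times \calD_{\beta, \kappa}$ with the diagonal constraint $K_1 = K_2 = K_3 = K$, together with the choice
\begin{align*}
e^1_{(t, K_1), (u, K_2)}(x, y) &= \delta_{K_1 = K_2}\, 1_{K_1}(x)\, \Delta_{K_1}^{k_1} f^1_{t,u}(y_{1, K_1}), \\
e^2_{(u, K_2), (v, K_3)}(x, y) &= \delta_{K_2 = K_3}\, 1_{K_2}(x)\, P f^2_{u,v}(y_{2, K_2}), \\
e^3_{(t, K_1), (v, K_3)}(x) &= \delta_{K_1 = K_3}\, 1_{K_1}(x)\, \Delta_{K_1}^{k_3} f^3_{t, v}(x),
\end{align*}
(the indicator $1_{K}(x)$ being inserted in every slot, permissibly in view of the support property of $a_K^j(x, \cdot)$) fit the problem into the $\hR$-form of Definition~\ref{def:hR}. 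Applying the $\hR$-boundedness of $\bigcup_j \calC(S_j)$ pointwise in $(x, y)$, integrating, and invoking H\"older's inequality with exponents $p_1, p_2, p_3$, the argument reduces to the three one-slot estimates
\[
\big\| \|(e^\ell)\|_{\Rad_2(X_\ell)} \big\|_{L^{p_\ell}(\R^d \times \calV^2)} \lesssim \|(f^\ell_{\cdot, \cdot})\|_{\Rad_2(L^{p_\ell}(X_\ell))}, \qquad \ell = 1, 2, 3.
\]

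The main obstacle will be the verification of these three one-slot estimates via property $(\alpha)$. For $\ell = 1$, I would observe that the restriction of the Rademacher family $\{\varepsilon_{(t, K_1), (u, K_2)}\}$ to the diagonal $K_1 = K_2$ is itself an independent family indexed by the triple $(t, u, K)$, so that property $(\alpha)$ applied to the decomposition $\{(t, u)\} \times \{K\}$ rewrites
\[
\|(e^1)\|_{\Rad_2(X_1)}^2 \sim \E_\omega \E_{\omega'} \Big| \sum_K \varepsilon_K(\omega')\, 1_K(x)\, \Delta_K^{k_1} g_\omega(y_{1, K}) \Big|_{X_1}^2, \qquad g_\omega := \sum_{t,u} \varepsilon_{t,u}(\omega) f^1_{t,u}.
\]
The $\varepsilon_K$-expectation and the $L^{p_1}$-integral are then handled by the decoupling inequality~\eqref{eq:DecEst}, yielding $\|g_\omega\|_{L^{p_1}(X_1)}^{p_1}$; averaging in $\omega$ and invoking Kahane--Khintchine produces exactly $\|(f^1_{\cdot, \cdot})\|_{\Rad_2(L^{p_1}(X_1))}^{p_1}$. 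The case $\ell = 3$ proceeds identically, with the UMD-valued unconditionality of martingale differences replacing decoupling in the $x$-variable; the case $\ell = 2$ with $P = \Delta_K^l$ is identical to $\ell = 1$, while in the paraproduct case $P = E_K$ the $y_2$-integral trivializes and one appeals instead to Stein's inequality for UMD spaces. The delicate technical point throughout is arranging the property-$(\alpha)$ step so that the triple-indexed independent sign sum over $\{(t, u, K)\}$ factors cleanly as a tensor product $\varepsilon_{(t, u)} \otimes \varepsilon_K$ in a manner compatible with both~\eqref{eq:DecEst} and Kahane--Khintchine, and keeping the three factors separated enough that the final H\"older in $L^{p_1} \cdot L^{p_2} \cdot L^{p_3}$ closes.
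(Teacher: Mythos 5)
Your overall architecture (re-index with Kronecker deltas over pairs $(t,K_1)$, $(u,K_2)$, $(v,K_3)$, apply the $\hR$-bound of $\bigcup_j\calC(S_j)$ pointwise after decoupling, then close with H\"older and three one-slot estimates) is legitimate for the two cancellative slots: your $\ell=1$ and $\ell=3$ estimates are correct, being \eqref{eq:RadTerm} combined with the property-$(\alpha)$ identification $\varepsilon_{(t,u),K}\sim\varepsilon_{(t,u)}\varepsilon_K'$. The proof breaks, however, at the non-cancellative slot. In your scheme that slot necessarily occupies a $\Rad_2$ position of Definition \ref{def:hR}, so after the pointwise application of $\hR$ and H\"older you must prove, for the $P=E_K$ piece of the expansion, the one-slot bound
\[
\Big\| \Big(\E\Big|\sum_{u,v}\sum_{K\in\calD_{\beta,\kappa},\,K\ni x}\varepsilon_{u,v,K}\,\langle f^2_{u,v}\rangle_K\Big|_{X_2}^2\Big)^{1/2}\Big\|_{L^{p_2}_x}
\lesssim \|(f^2_{u,v})\|_{\Rad_2(L^{p_2}(X_2))},
\]
uniformly in the number of scales carrying nonzero coefficients. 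This is false already for $N=1$ and $X_2=\C$: the left-hand side is the vertical square function of averages $\big(\sum_{K\ni x}|\langle f\rangle_K|^2\big)^{1/2}$, which for $f=1_{[0,1]^d}$ is $\gtrsim\sqrt{M}$ on most of $[0,1]^d$ when $M$ scales are active, so the constant grows like $\sqrt{M}$. Stein's inequality cannot rescue this: it compares $\E\|\sum_K\varepsilon_K E_K g_K\|$ with $\E\|\sum_K\varepsilon_K g_K\|$, and here the same function sits at every scale, so the right-hand side itself is $\sim\sqrt{M}\,\|f\|$. (There is no RMF hypothesis available either, and the theorem correctly has none in the bilinear case.) A secondary issue is the complexity bookkeeping: expanding $E^{k_2}_K$ into $1+k_2$ pieces \emph{and} splitting into the $1+\kappa$ sublattices $\calD_{\beta,\kappa}$ costs $(1+\kappa)^2$, not the claimed $(1+\kappa)$.

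The paper's proof is organized precisely to avoid putting the non-cancellative slot inside a randomized sum over scales: for a fixed choice of $\epsilon_{t,u,v}$ it assembles all the data into a \emph{single} bilinear shift $S$ acting on $\Rad_2(X_i)$-valued functions (coefficients $\langle a_{K,(I_i)}[e^1,e^2],e^3\rangle=\sum_{t,u,v}\epsilon_{t,u,v}\langle a^{t,u,v}_{K,(I_i)}[e^1_{t,u},e^2_{u,v}],e^3_{t,v}\rangle$), and then quotes Theorem \ref{thm:MultiShifts}. There the non-cancellative slot is the \emph{fixed} slot of the bilinear $\calR$-boundedness (after passing to an adjoint if needed), so the corresponding function only ever enters through its pointwise norm $|F(x)|_{\Rad_2}$ -- no square function of its averages appears -- and property $(\alpha)$ is used only to verify the $\calR$-boundedness of the new coefficient family from $\hR\big(\bigcup_j\calC(S_j)\big)$, by regrouping the test index with one of the $\Rad_2$ indices. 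To repair your argument you would have to build in the same mechanism, i.e.\ keep the non-cancellative slot out of the $\Rad_2$ positions of the $\hR$-application, which in effect reproduces the paper's reduction.
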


\begin{proof}
We divide the collection $\{S_j\}_{j \in \calJ}$ into three subcollections according to the type of the shifts, that is,
according to the place of the non-cancellative Haar function. We show that each of these subcollections satisfies the required estimate,
and therefore their union satisfies it also. Thus, we assume that each $S_j$ is of the form
$$
S_j(f_1,f_2)
= \sum_{K \in \calD} \sum_{\substack{I_i \in \calD \\ I_i^{(k_i)}=K}}
a^j_{K,(I_i)} [ \langle f_1, h_{I_1} \rangle, \langle f_2, h^0_{I_2} \rangle]h_{I_3}.
$$
The other two cases are handled symmetrically.

Let $N \in \N$ and suppose
$S_{t,u,v} \in \{S_j \}_{j \in \calJ}$, $f^1_{t,u} \in L^{p_1}(X_1)$, $f^2_{u,v} \in L^{p_2}(X_2)$ and $f^3_{t,v} \in L^{p_3}(X_3)$ for $t,u,v \in \{1, \dots, N\}$.
Abbreviate $\calC:=\bigcup_j \calC(S_j)$.
We need to show that for arbitrary $\epsilon_{t,u,v} \in \C$ with $|\epsilon_{t,u,v}|=1$ there holds that
\begin{equation}\label{eq:REstShift}
\begin{split}
\Big | \sum_{t,u,v=1}^N  & \epsilon_{t,u,v} \langle S_{t,u,v} [f^1_{t,u}, f^2_{u,v}], f^3_{t,v} \rangle  \Big| \\
&\lesssim  \hR(\calC)
 \| (f^1_{t,u})_{t,u}  \|_{\Rad_2(L^{p_1}(X_1))} \| (f^2_{u,v})_{u,v} \|_{\Rad_2(L^{p_2}(X_2))}\| (f^3_{t,v})_{t,v} \|_{\Rad_2(L^{p_3}(X_3))} \\
 & \sim \hR(\calC) \| F_1  \|_{L^{p_1}(\Rad_2(X_1))} \| F_2 \|_{L^{p_2}(\Rad_2(X_2))}\| F_3 \|_{L^{p_3}(\Rad_2(X_3))},
\end{split}
\end{equation}
where $F_1 \colon \R^d \to \Rad_2(X_1)$, $F_1(x)=(f_{t,u}(x))_{t,u}$, and similarly $F_2=(f^2_{u,v})_{u,v}$,
$F_3=(f^3_{t,v})_{t,v}$.
The last step was obtained using the Kahane-Khinchine inequality.
We will now construct a new shift $S$ so that
\begin{equation}\label{eq:RTrick}
\sum_{t,u,v=1}^N  \epsilon_{t,u,v} \langle S_{t,u,v} [f^1_{t,u}, f^2_{u,v}], f^3_{t,v} \rangle
=\langle S(F_1,F_2),F_3 \rangle.
\end{equation}
Then we show that
$$
|\langle S(F_1,F_2),F_3 \rangle|
\lesssim \hR(\calC) \prod _i \| F_i  \|_{L^{p_i}(\Rad_2(X_i))},
$$
from which the desired estimate \eqref{eq:REstShift} follows.

Denote the coefficients of $S_{t,u,v}$ by $a^{t,u,v}_{K,(I_i)}$.
Let $I_1,I_2,I_3,K \in \calD$ be such that $I_i^{(k_i)}=K$.
Define the operator $a_{K,(I_i)} \in \calL(\Rad_2(X_1) \times \Rad_2(X_2), \Rad_2(Y_3))$ by
$$
\langle a_{K,(I_i)}[e^1,e^2],e^3 \rangle
= \sum_{t,u,v =1}^N \epsilon_{t,u,v} \langle a^{t,u,v}_{K,(I_i)} [e^1_{t,u}, e^2_{u,v}],e^3_{t,v} \rangle,
$$
where $e^i=(e^i_{l,m})_{l,m} \in \Rad_{2}(X_i)$.
The shift $S$ is defined with these coefficients
by
$$
 S(G_1,G_2)
:= \sum_{K \in \calD} \sum_{\substack{I_i \in \calD \\ I_i^{(k_i)}=K}}
a_{K,(I_i)} [ \langle G_1, h_{I_1} \rangle, \langle G_2, h^0_{I_2} \rangle]h_{I_3},
$$
where $G_i \in L^{p_i}(\Rad_{2}(X_i))$, $i=1,2$.
We see that \eqref{eq:RTrick} is satisfied.

It remains to show that the shift $S$ is bounded, which follows by Theorem \ref{thm:MultiShifts} from the $\calR$-boundedness of the
family of coefficients $\{a_{K,(I_i)}\}_{K,(I_i)}$ (notice that $\Rad(X)$ is UMD if $X$ is). To check this, let the admissible partition
be  for example $\{\{1\}, \{2,3\}\}$.
Choose some $W \in \N$. For each $w=1, \dots, W$ let
$a_w:= a_{K(w),(I_i(w))}$ be one of the coefficients of $S$, and accordingly write $a_w^{t,u,v}:= a_{K(w),(I_i(w))}^{t,u,v}$.
Also, let  $e^1 \in \Rad_{2}(X_1)$ and $e^{i,w}=(e^{i,w}_{l,m})_{l,m} \in \Rad_{2}(X_i)$ for $i=2,3$ and $w=1, \dots, W$. Then
$$
\Big | \sum_{w=1}^W \langle a_w [e^1, e^{2,w}], e^{3,w} \rangle \Big |
= \Big | \sum_{w=1}^W \sum_{t,u,v=1}^N \epsilon_{t,u,v}\langle a_w^{t,u,v} [ e^1_{t,u}, e^{2,w}_{u,v}], e^{3,w}_{t,v} \rangle \Big |.
$$
We see that the pairs $(v,w)$ appear in $a^{t,u,v}_w$, $e^{2,w}_{u,v}$ and $e^{3,w}_{t,v}$.
Therefore, we look at the last sum as a sum over triples
$(t,u,(v,w))$. Thus, we see that
\begin{equation*}
\begin{split}
\Big | \sum_{w=1}^W  & \langle a_w [e^1, e^{2,w}], e^{3,w} \rangle \Big |
\le \hR(\calC) \| (e^1_{t,u})_{t,u} \|_{\Rad_{2}(X_1)} \\
& \times
\Big(\E \Big \| \sum_{\substack{u,v= 1, \dots, N \\ w=1, \dots, W}} \varepsilon_{u,v,w} e^{2,w}_{u,v} \Big \|_{X_2}^2 \Big)^{1/2}
\Big(\E \Big \| \sum_{\substack{t,v= 1, \dots, N \\ w=1, \dots, W}} \varepsilon_{t,v,w} e^{3,w}_{t,v} \Big \|_{X_3}^2 \Big)^{1/2}.
\end{split}
\end{equation*}
Using the fact that $X_2$ has Pisier's property $(\alpha)$, we have that
\begin{equation*}
\begin{split}
\Big(\E \Big \| \sum_{\substack{u,v= 1, \dots, N \\ w=1, \dots, W}} \varepsilon_{u,v,w} e^{2,w}_{u,v} \Big \|_{X_2}^2 \Big)^{1/2}
& \sim \Big(\E \E' \Big \| \sum_{\substack{u,v= 1, \dots, N \\ w=1, \dots, W}} \varepsilon_w \varepsilon_{u,v}' e^{2,w}_{u,v} \Big \|_{X_2}^2 \Big)^{1/2} \\
& = \Big( \E \Big \| \sum_{w=1}^W \varepsilon_w e^{2,w} \Big \|_{\Rad_{2}(X_2)}^2 \Big)^{1/2}.
\end{split}
\end{equation*}
Doing the same estimate for the term related to $X_3$ we have shown that
\begin{equation*}
\begin{split}
\Big | \sum_{w=1}^W &  \langle a_w [e^1, e^{2,w}], e^{3,w} \rangle \Big | \\
&\lesssim \| e^1 \|_{\Rad_{2}(X_1)} \| (e^{2,w})_{w=1}^W\|_{\Rad(\Rad_{2}(X_2))}
\| (e^{3,w})_{w=1}^W\|_{\Rad(\Rad_{2}(X_3))},
\end{split}
\end{equation*}
which is what we wanted to show. This concludes the proof.
\end{proof}
\section{Multi-linear multi-parameter analysis}\label{sec:multilinmultipar}
In this section we apply our operator-valued theory to prove multi-parameter estimates in our multilinear setup. Such a strategy requires
$\calR$-boundedness estimates, so in light of the previous section we will eventually have to restrict to the fundamental bilinear case. Focusing only on the essentials, we will simply prove estimates for dyadic shifts.
After this, the full paraproduct free singular integral theory in the same bilinear multi-parameter operator-valued
generality would only require the development of the corresponding representation theory. We do not anymore pursue this rather lengthy avenue here.

We define an $n$-linear $m$-parameter operator-valued dyadic shift in
$$
\R^d = \prod_{i=1}^m \R^{d_i}, \qquad d_i \ge 1.
$$
Suppose $X_1, \dots, X_n,Y_{n+1}$ are Banach spaces. Let also $\calD = \prod_{i=1}^m \calD^{d_i}$, where $\calD^{d_i}$ is a dyadic grid in $\R^{d_i}$, $i = 1, \ldots, m$. Fix the complexity $k = (k_j)_{j=1}^{n+1}$, $k_j = (k^i_j)_{i=1}^{m}$, $k^i_j \ge 0$. In what follows $\tilde h_I \in \{h_I, h_I^0\}$ if $I \in \calD^{d_i}$ for some $i$.

An $n$-linear $m$-parameter operator-valued shift $S^k = S^k_{\calD}$ has the form
$$
S^{k}(f_1, \ldots, f_n) = \sum_{K = \prod_{i=1}^m K^i \in \calD} A^k_{K}(f_1, \ldots, f_n),
$$
where
$$
A^k_K(f_1, \ldots, f_n) = \sum_{\substack{Q_1, \ldots, Q_{n+1} \in \calD  \\ Q_j^{(k_j)} = K}} a_{K, (Q_j)}
\big[  \bla f_1, \tilde h_{Q_1} \bra, \ldots, \bla f_n, \tilde h_{Q_n} \bra\big] \tilde h_{Q_{n+1}}.
$$
Here $f_j \in L^1_{\loc}(\R^d; X_j)$,
$$
a_{K,(Q_j)}=a_{K,Q_1,\dots, Q_{n+1}} \in \calL\Big(\prod_{j=1}^n X_j,Y_{n+1}\Big),
$$
$$
Q_j = \prod_{i=1}^m Q_j^i,\,\, Q_j^i \in \calD^{d_i}, \,\, Q_j^{(k_j)} := \prod_{i=1}^m (Q_j^i)^{(k_j^i)}\,\textup{ and }\, \tilde h_{Q_j} = \otimes_{i=1}^m \tilde h_{Q_j^i}.
$$
We assume that for all $K$ and the related $(Q_1, \ldots, Q_{n+1})$ we have for every
$i = 1, \ldots, m$ that in exactly two fixed positions, depending on $i$ (but on nothing else), of the tuple $(\tilde h_{Q^i_j})_{j=1}^{n+1}$ we have
cancellative Haar functions, e.g. that
$$
(\tilde h_{Q^i_j})_{j=1}^{n+1} = (h_{Q_1^i}, h_{Q_2^i}, h_{Q_3^i}^0, \ldots, h_{Q_{n+1}^i}^0).
$$
Moreover, we form the collection
\begin{equation}\label{eq:NormCoef-mpar}
\calC(S^k)=\Big\{ \frac{|K|^n}{\prod_{j=1}^{n+1} |Q_j|^{1/2}}  a_{K,(Q_j)} \colon K , Q_1, \dots, Q_{n+1} \in \calD, Q_j^{(k_j)} = K \Big\}.
\end{equation}

Define $\calD_{>t} := \calD^{d_{t+1}} \times \cdots \times \calD^{d_m}$, $t = 1, \ldots, m-1$, so that we can
write $\calD = \calD_{\le t} \times \calD_{>t}$. Define similarly e.g. $\R^{d}_{>t} = \R^{d_{t+1}} \times \cdots \times \R^{d_{m}}$,
$k_{j, >t} = (k^i_j)_{i=t+1}^{m}$.
For $K^1, Q_1^1, \ldots, Q^1_{n+1} \in \calD^{d_1}$ and functions $g_j \colon \R^d_{>1} \to X_j$ define
\begin{align*}
&S_{K^1, (Q^1_j)}(g_1, \ldots, g_n) \\
& := \sum_{K_{>1} = \prod_{i=2}^m K^i \in \calD_{>1}}
\sum_{\substack{Q_{1, >1}, \ldots, Q_{n+1, >1} \in \calD_{>1}  \\ Q_{j, >1}^{(k_{j, >1})} = K_{>1}}} a_{K, (Q_j)}
\big[  \bla g_1, \tilde h_{Q_{1, >1}} \bra, \ldots, \bla g_n, \tilde h_{Q_{n, >1}} \bra\big] \tilde h_{Q_{n+1, >1}},
\end{align*}
where $K := K^1 \times K_{>1}$, $Q_j = Q^1_j \times Q_{j, >1}$.
Notice that we can write
$$
S^{k}(f_1, \ldots, f_n) = \sum_{K^1 \in \calD^{d_1}} \sum_{\substack{Q^1_1, \ldots, Q_{n+1}^1  \in \calD^{d_1} \\ (Q_j^1)^{(k_j^1)} = K^1}}
S_{K^1, (Q^1_j)}[\langle f_1, \tilde h_{Q^1_1} \rangle, \ldots, \langle f_n, \tilde h_{Q^1_n} \rangle] \tilde h_{Q^1_{n+1}}.
$$

Let $\varpi \colon \prod_{i=1}^{n+1} X_i \to \C$ be a contraction as in \eqref{eq:Contr} and suppose $(X_1, \ldots, X_{n+1})$, where $X_{n+1} := Y_{n+1}^*$,
satisfies the $\RMF_{\varpi}$ condition.
Fix $p_j \in (1,\infty)$ so that $\sum_{j=1}^{n+1} 1/p_j = 1$
and let $\varpi_{>1} \colon \prod_{j=1}^{n+1} L^{p_j}(\R^d_{>1}; X_j) \to \C$ be as in \eqref{eq:LpXContr}. Example \ref{ex:LpRMF} says that the tuple of UMD spaces
$(L^{p_1}(\R^d_{>1}; X_1), \ldots, L^{p_{n+1}}(\R^d_{>1}; X_{n+1}))$ satisfies the  $\RMF_{\varpi_{>1}}$ condition.
Viewing $S^{k}$ as an $n$-linear operator-valued shift of complexity $(k^1_j)_{j=1}^{n+1}$ acting on functions $f_j \in L^{p_j}(\R^{d_1}; L^{p_j}(\R^{d}_{>1}; X_j)) = L^{p_j}(\R^d; X_j)$
Theorem \ref{thm:MultiShifts}
says that
\begin{align*}
\| S^k(f_1, \dots, f_n) \|_{L^{q_{n+1}}(\R^d; Y_{n+1})}
\lesssim (1+\max_j k^1_j)^{n-1}\calR_{>1} \prod_{j=1}^n \| f_j\|_{L^{p_j}(\R^d; X_j)},
\end{align*}
where
\begin{align*}
\calR_{>1} := \calR_{\varpi_{>1}}\Big( \Big\{  \frac{|K^1|}{\prod_{j=1}^{n+1} |Q^1_j|^{1/2}} S_{K^1, (Q^1_j)} \colon \prod_{j=1}^{n} L^{p_j}(\R^d_{>1}; X_j)& \to L^{p_{n+1}'}(\R^d_{>1}; Y_{n+1})\colon \\
& K^1, Q^1_j \in \calD^{d_1}, \, (Q_j^1)^{(k_j^1)} = K^1 \Big\} \Big).
\end{align*}
Now we need to revert to the bilinear setting, since as explained in Section \ref{sec:Rbound}, we do not have a suitable theory for the $\calR$-boundedness of $n$-linear shifts if $n > 2$.

\subsection{Boundedness of bilinear multi-parameter operator-valued shifts}
Suppose we have a family $\{S^k_u\}_{u \in \calU}$ of bilinear multi-parameter operator-valued shifts as above.
Suppose $X_1, X_2, Y_3$ are UMD spaces with Pisier's property $(\alpha)$. Recall that spaces of the form $L^p(\Omega; X)$ are UMD and have Pisier's property $(\alpha)$ if $X$ is
UMD and has Pisier's property $(\alpha)$. Therefore, we can iterate the above scheme using Theorem \ref{thm:hRShift} to get that given $p_1, p_2, q_3 \in (1,\infty)$ with $1/p_1 + 1/p_2 = 1/q_3$
we have
\begin{align*}
\hR(\{S_u^k \colon L^{p_1}(\R^d; X_1) \times L^{p_2}(\R^d; X_2)& \to L^{q_3}(\R^d; Y_3) \colon u \in \calU\} ) \\
&\lesssim \prod_{i=1}^m (1+\max_j k^i_j)  \hR\Big( \bigcup_{u \in \calU} \calC(S^k_u) \Big).
\end{align*}

\bibliography{OP_Multilin_7AUG}
\bibliographystyle{amsplain}

\end{document}